\documentclass[12pt,twoside,reqno]{amsart}
\usepackage[colorlinks=true, citecolor=blue, linkcolor=blue]{hyperref}
\usepackage{mathptmx, amsmath, amssymb, amsfonts, amsthm, mathptmx, enumerate, color,mathrsfs}
\usepackage{graphicx}
\usepackage{tikz}
\usepackage{lineno}
\usepackage{epstopdf}
\newcommand{\lap}{\mbox{$\Delta$}}
\newtheorem{theorem}{Theorem}[section]
\newtheorem{lemma}[theorem]{Lemma}

\newcommand{\be}{\begin{equation}}
\newcommand{\ee}{\end{equation}}

\theoremstyle{definition}

\newtheorem{remark}{Remark}
\numberwithin{equation}{section}
\allowdisplaybreaks[4]

\begin{document}
\setcounter{page}{1}

\vspace*{2.0cm}
\title[ Mixed local and nonlocal equations]
{On sliding methods for mixed local and nonlocal equations and Gibbons' conjecture}
\author[Yinbin Deng, Pengyan Wang, Zhihao Wang, Leyun Wu]{Yinbin Deng$^{a,b}$, Pengyan Wang$^a$, Zhihao Wang$^a$, Leyun Wu$^c$}
\maketitle
\vspace*{-0.6cm}

\begin{center}
{\footnotesize

$^a$  School of Mathematics and Statistics, Xinyang Normal University, Xinyang,   China\\

$^b$  School of Mathematics and Statistics, Central China Normal University, China \\

$^c$  School of Mathematics,  South China University of Technology,  Guangzhou,  China

}\end{center}

\vskip 4mm {\footnotesize \noindent {\bf Abstract.} We investigate  elliptic and parabolic equations involving mixed local and nonlocal operators of the form $(-\Delta)^s-\Delta$, as well as their parabolic counterparts with both the Marchaud fractional time derivative and the classical first-order derivative.

A major difficulty in this setting stems from the coexistence of operators with different nonlocal structures and incompatible scaling properties, which obstruct the direct use of classical sliding methods. To address this issue, we develop a refined sliding method suited to mixed local-nonlocal operators. As key technical ingredients, we establish new generalized weighted average inequalities, narrow region principles, and maximum principles in bounded and unbounded domains.

These tools enable us to derive monotonicity and one-dimensional symmetry results for mixed elliptic equations in bounded domains, half-spaces, and the whole space, and to extend the analysis to parabolic equations with mixed time derivatives. As an application, we resolve the Gibbons' conjecture for a class of mixed fractional equations.

\noindent {\bf Keywords.}
Monotonicity, one-dimensional symmetry, sliding methods, narrow region principle, maximum principle.

\noindent {\bf 2020 Mathematics Subject Classification.}
Primary 35R11, Secondary 35B50.}

\renewcommand{\thefootnote}{}
\footnotetext{
E-mail addresses: ybdeng@ccnu.edu.cn (Y. Deng); wangpy@xynu.edu.cn (P. Wang); wangzh082@126.com (Z. Wang); leyunwu@scut.edu.cn (L. Wu).}

\section{Introduction}

The mixed operator $(-\Delta)^s-\Delta $
arises naturally as the superposition of two fundamentally different diffusion mechanisms: the classical Laplacian, which models local Brownian motion, and the fractional Laplacian, which captures long-range jump processes.
In applications such as population dynamics and cell migration, the local diffusion term describes regular stochastic motion, whereas the nonlocal component accounts for abrupt long-distance dispersal or discontinuous diffusion. In financial models, including asset option pricing, the classical Brownian motion corresponds to the standard geometric Brownian motion, while the nonlocal jump term characterizes sudden price variations and extreme events. More broadly, across disciplines such as biology, finance, and materials science, mixed local–nonlocal operators provide a unified framework for modeling systems in which local smoothing effects and long-range interactions coexist. For further applications of mixed local and nonlocal operators, we refer the reader to \cite{BA25,BG2,BG1,CVW,CX1,DS1} and the references therein.

In this paper, we first investigate the monotonicity and one-dimensional symmetry of solutions to the following elliptic equation involving mixed local and nonlocal operators:
\begin{align*}
(-\Delta)^s u(x)-\Delta u(x) = g(x,u(x)),~x\in \Omega,
\end{align*}
where $\Omega\subset\mathbb{R}^n$ is either a bounded domain, the half-space $\mathbb{R}^n_+$, or the whole space $\mathbb{R}^n$. Here $-\Delta$ denotes the classical Laplacian, and the fractional Laplacian $(-\Delta)^s$ with $s\in(0,1)$ is defined by
\begin{align}\label{z1}
(-\Delta)^s  u(x ) &=C_{n,s} P.V. \int_{\mathbb R^n} \frac{ u(x )-u(y ) }{|x-y|^{n+2s}}dy \nonumber\\
&= \lim_{\epsilon \to 0} \int_{\mathbb{R}^n \setminus B_\epsilon(x)} \frac{u(x) - u(y)}{|x - y|^{n+2s}} dy,\end{align}
where $P.V.$ stands for the Cauchy principal value.
To ensure that the integral in \eqref{z1} is well defined, we assume
$$u \in C^2(\mathbb R^n) \cap \mathcal{L}_{2s},$$
where
$$ \mathcal{L} _{2s}=\{u  \in L^{1}_{loc} (\mathbb{R}^n) \mid \int_{\mathbb R^n} \frac{|u(x )|}{1+|x|^{n+2s}}dx<+\infty\}.$$

Another objective of this article is to study monotonicity and one-dimensional symmetry  of solutions to the following parabolic equation involving mixed local and nonlocal operators:
\begin{align}\label{eq1}
\partial^{\alpha}_t u (x, t)+(-\Delta)^s u(x,t)-\Delta u(x,t)=h(t,u(x,t)),~(x,t)\in \Omega \times \mathbb{R},
\end{align}
where $0<s<1$, $0<\alpha\le1$, and $\Omega$ may be either bounded or unbounded. The operator $\partial_t^{\alpha}$ denotes the Marchaud fractional derivative of order $\alpha$, defined by
$$ {\partial^\alpha_ t}u(x,t)=C_\alpha \int_{-\infty}^t \frac{u(x,t)-u(x,\tau)}{(t-\tau)^{1+\alpha}}d\tau,$$
with $\alpha\in(0,1)$ and $C_{\alpha}=\frac{\alpha}{\Gamma(1-\alpha)}$, where $\Gamma$ denotes the Gamma function. When $\alpha=1$, the Marchaud fractional derivative reduces to the classical first-order time derivative $\partial_t u$.

In the parabolic setting, $(-\Delta)^s u(x, t)$  is defined in the same way as in the elliptic case. For each fixed $t\in \mathbb R$,
\begin{align*}
(-\lap)^s u(x,t)&=C_{n,s} P.V. \int_{\mathbb R^n} \frac{u(x,t)-u(y,t)}{|x-y|^{n+2s}}dy\\
&= C_{n,s}  \lim\limits_{\epsilon\rightarrow 0}\int_{\mathbb R^n\setminus B_{\epsilon}(x)}\frac{u(x,t)-u(y,t)}{|x-y|^{n+2s}}dy.
\end{align*}
To guarantee that the left-hand side of \eqref{eq1} is well defined, we assume
$$
u(x,t)\in (C^2(\mathbb R^n)\cap {\tilde {\mathcal L}}_{2s})\times (C^1(\mathbb R)\cap {\mathcal L}^-_\alpha (\mathbb R)),
$$
where
$$ {\tilde {\mathcal L}}_{2s}=\left\{u(\cdot, t) \in L^1_{loc} (\mathbb{R}^n) \mid \int_{\mathbb R^n} \frac{|u(x,t)|}{1+|x|^{n+2s}}dx<+\infty\right\}$$
and
$${\mathcal L}^-_{\alpha}(\mathbb R):=\left\{ u\in L^1_{loc}(\mathbb R ) \mid \int_{-\infty}^t \frac{|u(\tau)|}{1+|\tau|^{1+\alpha}}d\tau<+\infty~\mbox{ for each }~t\in \mathbb R\right\}.$$

Symmetry and monotonicity are fundamental qualitative properties of solutions to partial differential equations. To overcome the intrinsic nonlocality of the fractional Laplacian, a variety of powerful techniques have been developed in recent years to study symmetry and monotonicity of solutions to fractional equations. These include the extension method \cite{CS}, the method of moving planes in integral form \cite{CLO,LZ1}, the direct method of moving planes \cite{CL,CLL}, sliding methods \cite{LiuC,WC2,W21}, the asymptotic method of moving planes \cite{CWNH,WpC}, the method of moving spheres \cite{CLZr}, the method of scaling spheres \cite{DQ}, and other related approaches \cite{LWX,LZ2}.

Among these techniques, the sliding method is a particularly effective tool for establishing monotonicity and one-dimensional symmetry of solutions to partial differential equations. This method originates from the pioneering work of Berestycki and Nirenberg \cite{BN1,BN3}. Subsequently, Wu and Chen \cite{WC2} and Liu \cite{LiuC} further developed this approach to derive monotonicity and one-dimensional symmetry of solutions to the elliptic fractional
$p$-equation
\begin{equation}\label{101}
(-\Delta)_p^s u(x)=f(x,u(x))
\end{equation}
in unbounded domains.

In the case $p=2$, Wu and Chen \cite{WC3} established corresponding results for equation \eqref{101} in both bounded and unbounded domains.
When $p=2$ and $f=f(x,u(x),\nabla u(x))$, Wang \cite{W21} investigated the monotonicity and uniqueness of solutions to \eqref{101} in bounded domains and in the half-space $\mathbb{R}^n_+$ by means of the sliding method.
For $2<p<+\infty$ and nonlinearities of the form $f=f(x,u(x),\nabla u(x))$, Wang \cite{W23} provided a new proof of monotonicity and uniqueness for solutions to elliptic fractional $p$-equations \eqref{101} via the sliding method.

More recently, Chen and Wu \cite{CWA} presented several applications of the sliding method. In particular, they established monotonicity and one-dimensional symmetry of entire solutions to the fractional diffusion equation
$$\partial_t u(x,t)+(-\Delta)^s u(x,t)=f(t,u(x,t)).$$
Subsequently, Guo \cite{G24} proved the one-dimensional symmetry and monotonicity of entire positive solutions to parabolic fractional $p$-equations by applying the sliding method.
For a comprehensive treatment of the theory and applications of the sliding method, we refer the reader to \cite{CHM,DS,FM,GMZ}.

Next, we present our main results, distinguishing between the elliptic and parabolic equations involving mixed local and nonlocal operators.

\subsection{  Elliptic equations involving  mixed local and nonlocal operators}
\
\newline
\indent
The narrow region principle is a fundamental ingredient of the sliding method, as it provides a starting position from which the domain can be slid. Therefore, we first establish a narrow region principle for elliptic equations involving mixed local and nonlocal operators.

\begin{theorem}\label{D1} (Narrow region principle in bounded domains)
Let $\Omega$ be a bounded narrow region in $\mathbb{R}^n$. Assume that $w_\tau \in \mathcal{L}_{2s} \cap C^2(\Omega)$  is upper semi-continuous on $\overline{\Omega}$ and satisfies
\begin{equation}\label{D11}
\left\{
\begin{array}{ll}
(-\Delta)^s w_\tau (x)-\Delta w_\tau (x) +c(x)w_\tau (x)\leq 0, ~&x\in \Omega,\\
w_\tau(x) \leq 0,~&x\in\Omega^c:=\mathbb{R}^n \setminus \Omega,\\
 \end{array}
\right.
\end{equation}
where $c(x)$ bounded from below in $\Omega$. Let $l_n(\Omega)$
denote the width of $\Omega$ in the $x_n$-direction, and assume that $\Omega$ is narrow in this direction in the sense that
\begin{align}\label{D12}
l_n(\Omega)|\inf_\Omega c(x)|^{\frac{1}{2s}}\leq C,
\end{align}
where $C$ is a positive constant.

Then
\begin{align}\label{D13}
w_\tau(x) \leq 0,~x\in \Omega,
\end{align}
furthermore, we have
\begin{align}\label{D14}
\mbox{ either }~ w_\tau(x) < 0,~x\in \Omega~\mbox{ or }~ w_\tau(x) \equiv 0,~x\in \mathbb{R}^n.
\end{align}
\end{theorem}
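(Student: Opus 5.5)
Since \eqref{D11} places no sign condition on $c$, the standard multiplicative trick of the direct method of moving planes would not close here, so I plan to use a barrier/auxiliary-function argument that stays within the mixed operator. Let $\Omega$ lie in the slab $\{a<x_n<a+l\}$, where $l=l_n(\Omega)$, and put $\bar w(x)=w_\tau(x)/\zeta(x)$. Here $\zeta$ is a positive function with $(-\Delta)^s\zeta-\Delta\zeta\ge \kappa\, l^{-2s}\zeta$ on the slab. The natural choice is $\zeta(x)=\cos\big(\tfrac{\pi}{2}\tfrac{x_n-a-l/2}{l}\big)$, truncated or extended outside the slab. A simpler route avoids the quotient altogether, and it is the one I intend to use. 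Argue by contradiction: since $w_\tau$ is upper semi-continuous on $\overline\Omega$ and $w_\tau\le0$ off $\Omega$, if $\sup_\Omega w_\tau>0$ the supremum is attained at some $\bar x\in\Omega$ with $w_\tau(\bar x)=\max_{\mathbb R^n}w_\tau>0$.

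At $\bar x$ the local part satisfies $-\Delta w_\tau(\bar x)\ge0$, since $\bar x$ is an interior maximum and $w_\tau\in C^2(\Omega)$. For the nonlocal part I use the standard estimate. Because $w_\tau(\bar x)-w_\tau(y)\ge0$ for all $y$, and $w_\tau(\bar x)-w_\tau(y)\ge w_\tau(\bar x)$ for $y\in\Omega^c$,
\begin{equation*}
(-\Delta)^s w_\tau(\bar x)\ge C_{n,s}\int_{\Omega^c}\frac{w_\tau(\bar x)}{|\bar x-y|^{n+2s}}\,dy\ge C_{n,s}\,w_\tau(\bar x)\int_{\{y:\,|y_n-\bar x_n|\ge l\}}\frac{dy}{|\bar x-y|^{n+2s}}\ge \frac{C_0}{l^{2s}}\,w_\tau(\bar x).
\end{equation*}
The middle inequality holds because the two half-spaces $\{y_n\ge a+l\}$ and $\{y_n\le a\}$ lie in $\Omega^c$, and at least one of them lies within distance at most $l$ of $\bar x_n$ in the $x_n$-direction. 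I therefore integrate over the complement of the slab, which contains a half-space at distance $d\le l$ from $\bar x$; a scaling computation gives $\int_{\{y_n-\bar x_n\ge d\}}|\bar x-y|^{-n-2s}dy=C d^{-2s}\ge C l^{-2s}$.

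Combining the two estimates with the inequation in \eqref{D11} gives
\begin{equation*}
0\ge \Big(\frac{C_0}{l^{2s}}+c(\bar x)\Big)w_\tau(\bar x)\ge\Big(\frac{C_0}{l^{2s}}-|\inf_\Omega c|\Big)w_\tau(\bar x).
\end{equation*}
When $\inf_\Omega c<0$, the narrowness hypothesis \eqref{D12} with the constant $C$ chosen small (specifically $C^{2s}<C_0$) makes the bracket positive, which is a contradiction. When $\inf_\Omega c\ge0$, the bracket is positive automatically. This proves \eqref{D13}. The main point to check is how the constant $C$ in \eqref{D12} is to be read: it must be a sufficiently small constant depending only on $n,s$, and I take that interpretation. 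The Laplacian plays no quantitative role; we only use its sign at a maximum point.

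For the strong conclusion \eqref{D14}, suppose $w_\tau\le0$ in $\mathbb R^n$ and $w_\tau(x_0)=0$ at some $x_0\in\Omega$. Then $x_0$ is a maximum point, so $-\Delta w_\tau(x_0)\ge0$ and $c(x_0)w_\tau(x_0)=0$. The equation then gives
\begin{equation*}
C_{n,s}\,P.V.\int_{\mathbb R^n}\frac{-w_\tau(y)}{|x_0-y|^{n+2s}}\,dy=(-\Delta)^s w_\tau(x_0)\le \Delta w_\tau(x_0)\le 0,
\end{equation*}
while the integrand is nonnegative. Hence $w_\tau\equiv0$ almost everywhere in $\mathbb R^n$, and by continuity in $\Omega$ and upper semi-continuity, $w_\tau\equiv0$. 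Otherwise $w_\tau<0$ throughout $\Omega$, which gives the dichotomy \eqref{D14}.
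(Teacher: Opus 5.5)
Your proof is correct and follows the same route as the paper. Both arguments take a positive interior maximum of $w_\tau$ and use the sign of $-\Delta w_\tau$ there, bound $\int_{\Omega^c}|\bar x-y|^{-n-2s}\,dy\ge C\,l_n(\Omega)^{-2s}$ and combine it with \eqref{D12}, and obtain the dichotomy \eqref{D14} from the nonnegative-integrand argument. Two details the paper leaves implicit are made precise in your version: you justify the integral bound explicitly via $\{|y_n-\bar x_n|\ge l\}\subset\Omega^c$, and you point out that the constant in \eqref{D12} must be small relative to the constant in that bound (the paper uses the same letter $C$ for both).
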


\begin{remark}
Theorem~\ref{D1} can be regarded as an extension of Theorem~1.1 in \cite{WC3}.
\end{remark}

In addition, following \cite{BN3, WC2,WC3}, we impose an exterior condition on the solution $u$. Let
$$u(x)= \eta(x),~x\in \Omega^c,$$
and assume that

$(\textbf{A})$
For any three points $x=(x',x_n)$, $y=(x',y_n)$, and $z=(x',z_n)$ lying on a line segment parallel to the $x_n$-axis, with $y_n<x_n<z_n$ and $y,~z\in \Omega^c$, we have
\begin{align*}
\eta(y) < u(x) < \eta(z), ~x \in \Omega
\end{align*}
and
\begin{align*}
\eta(y) \leq \eta(x) \leq \eta(z), ~x \in \Omega^c.
\end{align*}

Applying the sliding methods, we obtain the monotonicity of solutions to  elliptic equations involving mixed  local and nonlocal operators in  bounded domains.

\begin{theorem}\label{D2} (Monotonicity in bounded domains)
Let $\Omega$ be a bounded domain in $\mathbb{R}^n$ that is convex in the $x_n$-direction. Assume that $u\in \mathcal{L}_{2s}\cap C^2(\Omega)$ is a solution of
\begin{equation}\label{D21}
\left\{
\begin{array}{ll}
(-\Delta)^s u(x)-\Delta u(x)=g(u(x)), ~&x\in \Omega,\\
u(x)=\eta(x),~&x\in \Omega^c,\\
 \end{array}
\right.
\end{equation}
and satisfies $(\textbf{A})$. Suppose that $g$ is Lipschitz continuous. Then
\begin{align*}
u~\mbox{ is strictly monotone increasing with respect to }~x_n~\mbox{ in }~\Omega,
\end{align*}
that is
\begin{align*}
u(x',x_n+\tau)>u(x',x_n),~(x',x_n),(x',x_n+\tau)\in \Omega~\mbox{ for any }~\tau > 0.
\end{align*}
\end{theorem}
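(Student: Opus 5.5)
We use the sliding method in the $x_n$-direction, with Theorem~\ref{D1} providing the starting position. For $\tau>0$ set $u_\tau(x)=u(x',x_n+\tau)$ and $w_\tau(x)=u(x)-u_\tau(x)$, defined on all of $\mathbb{R}^n$ with $u=\eta$ outside $\Omega$. Let $D_\tau=\Omega\cap(\Omega-\tau e_n)$ be the region where both $u$ and $u_\tau$ solve the equation. Since $\Omega$ is bounded, $D_\tau=\emptyset$ for $\tau\ge l_n(\Omega)$. The goal is to show $w_\tau<0$ in $D_\tau$ for every $0<\tau<l_n(\Omega)$.

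\textbf{Linear inequality and exterior sign.} In $D_\tau$, subtracting the equations gives
\[
(-\Delta)^s w_\tau-\Delta w_\tau+c_\tau(x)w_\tau=0,\qquad c_\tau(x)=-\frac{g(u(x))-g(u_\tau(x))}{u(x)-u_\tau(x)}
\]
(with $c_\tau=0$ where $w_\tau=0$). Because $g$ is Lipschitz with constant $L$, we have $|c_\tau|\le L$, uniformly in $\tau$.

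We next check that $w_\tau\le0$ on $D_\tau^c$, using condition (\textbf{A}) and the convexity of $\Omega$ in $x_n$. Fix $x\in D_\tau^c$; then either $x\notin\Omega$ or $x+\tau e_n\notin\Omega$.
\begin{itemize}
\item If both $x$ and $x+\tau e_n$ lie in $\Omega^c$, the second inequality in (\textbf{A}) gives $\eta(x)\le\eta(x+\tau e_n)$.
\item If $x\in\Omega$ and $x+\tau e_n\in\Omega^c$, the first inequality in (\textbf{A}) gives $u(x)<\eta(x+\tau e_n)$.
\item If $x\in\Omega^c$ and $x+\tau e_n\in\Omega$, convexity forces $x$ to lie below the segment $\Omega\cap\{x'\}\times\mathbb{R}$, so again $\eta(x)<u(x+\tau e_n)$ by (\textbf{A}).
\end{itemize}
In this last case we must check that the relevant exterior points exist on the line. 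If the line meets $\Omega$, boundedness guarantees exterior points above and below it. If the line misses $\Omega$, only the second inequality of (\textbf{A}) is needed. Upper semicontinuity of $w_\tau$ on $\overline{D_\tau}$ follows from the continuity that (\textbf{A}) implicitly assumes near $\partial\Omega$. This is a point to treat with care, taking limits through the strict inequalities in (\textbf{A}).

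\textbf{Start and sliding.} For $\tau$ close to $l_n(\Omega)$, the set $D_\tau$ has width $l_n(D_\tau)\le l_n(\Omega)-\tau$ in the $x_n$-direction. Hence \eqref{D12} holds with $\inf c_\tau\ge -L$, and Theorem~\ref{D1} yields $w_\tau\le0$ in $D_\tau$. Now let
\[
\tau_0=\inf\{\tau>0:\ w_\mu\le0 \text{ in } D_\mu \text{ for all } \mu\ge\tau\}.
\]
We claim $\tau_0=0$. Suppose instead $\tau_0>0$. By continuity, $w_{\tau_0}\le0$ in $D_{\tau_0}$. By the dichotomy \eqref{D14}, applied through the strong maximum principle part of Theorem~\ref{D1} (which concerns only the sign structure and not narrowness), either $w_{\tau_0}<0$ in $D_{\tau_0}$ or $w_{\tau_0}\equiv0$. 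The alternative $w_{\tau_0}\equiv0$ is impossible, since it contradicts the strict inequality in (\textbf{A}) at points of $\Omega$ near its boundary in the $x_n$-direction.

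Next choose a compact $K\subset D_{\tau_0}$ such that $D_{\tau_0}\setminus K$ is narrow in the sense of \eqref{D12}. On $K$ we have $w_{\tau_0}\le-\delta<0$, so $w_\tau<0$ on $K$ for $\tau\in(\tau_0-\epsilon,\tau_0]$. On $D_\tau\setminus K$, which is narrow, we know $w_\tau\le0$ on its complement (on $K$ by the above, on $D_\tau^c$ by the exterior sign). Theorem~\ref{D1} then gives $w_\tau\le0$ there. This contradicts the definition of $\tau_0$.

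\textbf{Conclusion and main obstacle.} So $w_\tau\le0$ for every $\tau>0$, and the strict version \eqref{D14}, together with the exclusion of $w_\tau\equiv0$, gives $u(x',x_n+\tau)>u(x)$, which is the claimed strict monotonicity. The main obstacle is the narrow-region step for $D_\tau\setminus K$. Three things must hold there: the set must be narrow in the direction required by \eqref{D12} with the uniform bound $L$, the set must shrink uniformly as $\epsilon\to0$, and the upper semicontinuity hypothesis must hold on its closure. These are handled by taking $K$ as a large compact subset and using the uniform Lipschitz bound.
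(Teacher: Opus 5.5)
Your proposal is correct and follows essentially the same sliding argument as the paper: Theorem~\ref{D1} for the starting position, decreasing $\tau$ to $\tau_0$, the strong maximum principle plus the strict inequality in $(\textbf{A})$ to get $w_{\tau_0}<0$ in $D_{\tau_0}$, carving out a compact $K$ so that the remainder is narrow, and the strong maximum principle again for strictness. The paper passes over the narrow-region step you flag in exactly the same way: $D_\tau\setminus K$, and $D_\tau$ for $\tau$ just below $\sup\{\tau: D_\tau\neq\emptyset\}$ (which can be smaller than $l_n(\Omega)$), have small measure but not small $x_n$-width, so what is really needed is a small-measure version of Theorem~\ref{D1}, which holds because $\int_{\Omega^c}|x-y|^{-n-2s}\,dy\ge C|\Omega|^{-2s/n}$ for every $x\in\Omega$.
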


To apply the sliding method in unbounded domains, we establish the following maximum principle.

\begin{theorem} \label{D3} (Maximum principle in unbounded domains)
Let $\Omega \subset \mathbb{R}^n$ be an open set, possibly unbounded and disconnected. For any $\tilde x \in \mathbb{R}^n$, assume that
\begin{align}\label{D31}
\underset{R \to +\infty}{\underline{\lim}} \frac{| B_{R}(\tilde x)\cap \Omega^c|}{|B_{R}(\tilde x)|}\geq c_0> 0.
\end{align}

Let $u\in \mathcal{L}_{2s}\cap C^2(\Omega)$ be bounded from above and satisfy
\begin{equation}\label{D32}
\left\{
\begin{array}{ll}
(-\Delta)^s u(x)-\Delta u(x) +c(x)u(x)\leq 0, &~\mbox{ at the points in } \Omega~\mbox{ where }~u(x) > 0,\\
u(x) \leq 0 ,&~x\in \Omega^c,\\
 \end{array}
\right.
\end{equation}
for some nonnegative function $c(x),$
then
\begin{align}\label{D33}
u(x) \leq 0,~x\in \Omega.
\end{align}
\end{theorem}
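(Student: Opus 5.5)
We argue by contradiction. Suppose $M:=\sup_{\Omega} u>0$; since $u\le 0$ in $\Omega^c$, this is also $\sup_{\mathbb R^n}u$, and it is finite because $u$ is bounded above. The supremum may fail to be attained because $\Omega$ can be unbounded. We therefore first pick a maximizing sequence and then perturb $u$ by a localized bump so that the perturbed function attains its maximum. That is, choose $x^k\in\Omega$ with $u(x^k)\ge (1-\frac1k)M$ and $u(x^k)>0$. Fix $\varphi\in C_0^\infty(B_1(0))$ with $0\le\varphi\le1$ and $\varphi(0)=1$, and set $\varphi_k(x)=\varphi\big(\frac{x-x^k}{R_k}\big)$, with radii $R_k\to+\infty$ chosen below. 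Consider $w_k=u+\frac{M}{k}\varphi_k$. Then $w_k(x^k)\ge M$, while $w_k=u\le M$ outside $B_{R_k}(x^k)$. Hence $w_k$ attains a maximum $\bar x^k$ in $\overline{B_{R_k}(x^k)}$ with $w_k(\bar x^k)\ge M$. In particular $u(\bar x^k)\ge M-\frac Mk>0$ for $k\ge2$, so $\bar x^k\in\Omega$ and the differential inequality holds at $\bar x^k$.

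\emph{Local part.} At $\bar x^k$, the function $w_k$ has an interior maximum. Since $u\in C^2(\Omega)$, this gives $-\Delta w_k(\bar x^k)\ge0$, so
\[
-\Delta u(\bar x^k)\ge \frac{M}{k}\Delta\varphi_k(\bar x^k)\ge -\frac{C M}{kR_k^2}.
\]
Also $c(\bar x^k)u(\bar x^k)\ge0$ because $c\ge0$ and $u(\bar x^k)>0$. Hence the hypothesis yields $(-\Delta)^s u(\bar x^k)\le \frac{CM}{kR_k^2}$.

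\emph{Nonlocal part.} We split
\[
(-\Delta)^s u=(-\Delta)^s w_k-\frac Mk(-\Delta)^s\varphi_k,
\qquad
\Big|\frac Mk(-\Delta)^s\varphi_k\Big|\le \frac{CM}{kR_k^{2s}}.
\]
Because $\bar x^k$ is a global maximum of $w_k$, the integrand $w_k(\bar x^k)-w_k(y)$ is nonnegative. We keep only the portion over $\Omega^c$, where $w_k(y)\le u(y)+\frac Mk\varphi_k(y)$ and $u(y)\le0$, so the integrand there is at least $M-\frac Mk\varphi_k(y)\ge M(1-\frac1k)\ge \frac M2$. Since $|\bar x^k-x^k|\le R_k$, we have $B_{R}(x^k)\subset B_{2R}(\bar x^k)$ for every $R\ge R_k$. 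This gives
\[
(-\Delta)^s w_k(\bar x^k)\ge C_{n,s}\frac M2\int_{\Omega^c\cap B_{4R_k}(x^k)}\frac{dy}{(5R_k)^{n+2s}}\ge c\,M R_k^{-2s}\frac{|B_{4R_k}(x^k)\cap\Omega^c|}{|B_{4R_k}(x^k)|}.
\]

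\emph{Main obstacle.} The density hypothesis \eqref{D31} is stated for each fixed center $\tilde x$, but the centers $x^k$ here vary. The cleanest fix is to read \eqref{D31} as applied with the fixed center $\tilde x$: for any fixed $\tilde x$, take $R_k\gg|x^k-\tilde x|$. Then $B_{4R_k}(x^k)\supset B_{2R_k}(\tilde x)$, and \eqref{D31} gives density at least $c_0/2\cdot 2^{-n}$ once $R_k$ is large. This is the only place the dependence on $k$ must be controlled, and it is handled by choosing $R_k$ large after $x^k$ is fixed. Combining the bounds,
\[
c\,c_0 M R_k^{-2s}-\frac{CM}{kR_k^{2s}}\le (-\Delta)^s u(\bar x^k)\le \frac{CM}{kR_k^2}.
\]
Multiplying through by $R_k^{2s}$ and using $R_k\ge1$ (so $R_k^{2s-2}\le1$), we get $c\,c_0M\le \frac{CM}{k}+\frac{CM}{k}$, which is impossible for $k$ large. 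Hence $M\le0$, that is, $u\le0$ in $\Omega$.
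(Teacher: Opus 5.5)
Your argument is correct, modulo one tacit assumption that the paper also uses (see the second paragraph). Your route differs from the paper's in its key choices and is more elementary.

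\textbf{How the two proofs differ.} The paper perturbs $u$ by a bump of radius $r_k=\frac12\operatorname{dist}(x^k,\partial\Omega)$ supported inside $\Omega$. It asserts $r_k\ge\bar c>0$, so that the Laplacian error $\varepsilon_k r_k^{-2}$ is dominated by $\varepsilon_k r_k^{-2s}$. It then applies the average inequality of Lemma~\ref{L2} at the maximum point $\tilde x^k$, and bounds the exterior average by $(1-C)(M+\varepsilon_k)$ using the density of $\Omega^c$ in annuli $B_R(\tilde x^k)\setminus B_{R/\sqrt[n]{2}}(\tilde x^k)$ taken from \cite{CWA}.

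You instead let the bump radius $R_k\to\infty$ and allow the bump to overlap $\Omega^c$, recovering $\bar x^k\in\Omega$ from $u(\bar x^k)>0$. You then bound $(-\Delta)^s w_k(\bar x^k)$ from below directly by the $\Omega^c$-portion of the integral over a single ball at the same scale $R_k$. After multiplying by $R_k^{2s}$, the bump errors are $O(1/k)$, and the Laplacian error $R_k^{2s-2}/k$ is harmless because $s<1$ and $R_k\ge 1$.

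This buys robustness in two places:
\begin{itemize}
\item You need no lower bound on $\operatorname{dist}(x^k,\partial\Omega)$. The paper obtains one by asserting that $\{x^k\}$ converges to a point of $\partial\Omega$, which is not available when $\Omega$ is unbounded.
\item You avoid a scale mismatch in the paper. There, $r_k$ is already fixed by the bump, but is later reset to $R_k/\sqrt[n]{2}$, with $R_k$ depending on the moving centre $\tilde x^k$.
\end{itemize}
Using one fixed centre $\tilde x$ and choosing $R_k$ only after $x^k$ keeps the order of choices ($x^k$, then $R_k$, then $\bar x^k$) non-circular. The paper's formulation is more modular, since Lemma~\ref{L2} is reused in the sliding arguments for Theorems~\ref{D5} and~\ref{T3}.

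\textbf{The tacit assumption.} The step ``hence $w_k$ attains a maximum'' requires $u$ to be upper semicontinuous up to $\partial\Omega$, e.g.\ $\limsup_{x\to\partial\Omega,\,x\in\Omega}u(x)\le 0$. The literal hypothesis $u\in C^2(\Omega)$ does not give this. This is not specific to your proof: the paper uses the same property when it claims $u(x^k)\to 0$ as $x^k$ approaches $\partial\Omega$. Without it the statement is false. Take $n=1$, $\Omega=(0,\infty)$, $u=0$ on $\Omega^c$, and $u(x)=M-x^\beta$ on $\Omega$ with $0<\beta<\min\{2s,2-2s\}$. One computes $(-\Delta)^s u(x)=C_{1,s}\big(\frac{u(x)}{2s}x^{-2s}+I_\beta x^{\beta-2s}\big)$ with $I_\beta$ finite, while $-u''(x)=-\beta(1-\beta)x^{\beta-2}$. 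For $M$ small the latter dominates on $\{u>0\}=(0,M^{1/\beta})$. So every hypothesis holds with $c\equiv 0$, yet $u>0$ near $0$.

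The assumption does hold in all of the paper's applications: $u\in C(\overline{\mathbb R^n_+})$ in Theorem~\ref{D4} and $u\in C^2(\mathbb R^n)$ in Theorem~\ref{D5}. You should state it explicitly as a hypothesis.
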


Using the above maximum principle, we obtain monotonicity results in unbounded domains.

\begin{theorem}\label{D4} (Monotonicity in the upper-half space)
Let $u(x) \in \mathcal{L}_{2s} \cap C^2(\mathbb{R}_+^n) \cap C(\overline{\mathbb{R}_+^n})$ be a nonnegative bounded solution of
\begin{equation}\label{D41}
\left\{
\begin{array}{ll}
(-\Delta)^s u(x)-\Delta u(x) = g(u(x)), ~& x \in \mathbb{R}_+^n :=\{x=(x',x_n)\in \mathbb R^n \mid x_n>0\}, \\
u(x) > 0, ~& x\in \mathbb{R}_+^n , \\
u(x) = 0, ~& x \in \mathbb{R}^n \setminus \mathbb{R}_+^n,
 \end{array}
\right.
\end{equation}
where $g$ is continuous and monotone decreasing. Then
\begin{align*}
u(x) ~\mbox{ is strictly increasing with respect to }~ x_n~\mbox { in}~ \mathbb{R}_+^n.
\end{align*}
\end{theorem}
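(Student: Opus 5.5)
We use the sliding method in the $x_n$-direction, with Theorem~\ref{D3} in place of a narrow region principle. For $\tau>0$ set $u_\tau(x)=u(x',x_n+\tau)$ and $w_\tau(x)=u(x)-u_\tau(x)$. We aim to show that $w_\tau\le 0$ in $\mathbb{R}^n_+$ for every $\tau>0$, and then upgrade this to strict inequality.

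\textbf{Step 1 (nonpositivity for every $\tau$).} Since $u\equiv 0$ on $\mathbb{R}^n\setminus\mathbb{R}^n_+$ and $u_\tau\ge 0$ everywhere, we have $w_\tau\le 0$ in $\Omega^c=\mathbb{R}^n\setminus\mathbb{R}^n_+$. The function $w_\tau$ is bounded (as $u$ is) and lies in $\mathcal{L}_{2s}\cap C^2(\mathbb{R}^n_+)$. It satisfies
\[
(-\Delta)^s w_\tau-\Delta w_\tau=g(u)-g(u_\tau),\qquad x\in\mathbb{R}^n_+.
\]
At points where $w_\tau(x)>0$ we have $u(x)>u_\tau(x)$. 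Since $g$ is monotone decreasing, the right-hand side is then $\le 0$, so \eqref{D32} holds with $c\equiv 0$. The complement of the half-space satisfies \eqref{D31} with $c_0=1/2$ for every $\tilde x$, since $|B_R(\tilde x)\cap\{x_n\le0\}|/|B_R|\to 1/2$. Theorem~\ref{D3} therefore gives $w_\tau\le0$ in $\mathbb{R}^n_+$ for every $\tau>0$. There is no need to start the sliding from a narrow region. We only need $g$ decreasing and never any Lipschitz bound, which is why only continuity of $g$ is assumed.

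\textbf{Step 2 (strictness).} Fix $\tau>0$ and suppose $w_\tau(x^0)=0$ at some $x^0\in\mathbb{R}^n_+$. Then $x^0$ is a global maximum of $w_\tau$, so $-\Delta w_\tau(x^0)\ge0$. At $x^0$ we have $u=u_\tau$, so the right-hand side vanishes, and the equation gives
\[
0=(-\Delta)^s w_\tau(x^0)-\Delta w_\tau(x^0)\ge C_{n,s}\,P.V.\!\int_{\mathbb{R}^n}\frac{-w_\tau(y)}{|x^0-y|^{n+2s}}\,dy\ge0 .
\]
Hence $w_\tau\equiv0$ in $\mathbb{R}^n$. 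This is impossible: on the strip $-\tau<y_n<0$ we have $w_\tau(y)=-u(y',y_n+\tau)<0$, since $u>0$ in $\mathbb{R}^n_+$. Thus $w_\tau<0$ in $\mathbb{R}^n_+$ for all $\tau>0$, which is the claimed strict monotonicity.

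\textbf{Main obstacle.} The delicate part is the application of Theorem~\ref{D3}. We must verify that $w_\tau$ meets its regularity hypotheses: it is $C^2$ inside $\mathbb{R}^n_+$ only, with continuity up to the boundary from $u\in C(\overline{\mathbb{R}^n_+})$. We must also check that the differential inequality is only required at points where $w_\tau>0$, which is exactly where the sign of $g$ is controlled. Once these are checked, the remaining argument is a direct pointwise computation.
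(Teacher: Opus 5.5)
Your proposal is correct and follows the paper's proof essentially step for step. First, Theorem~\ref{D3} is applied to $w_\tau$ on $\mathbb{R}^n_+$ with $c\equiv 0$, using that $g$ is decreasing at points where $w_\tau>0$. Then an interior zero maximum is ruled out by the strict positivity of $(-\Delta)^s w_\tau$ there. Your explicit check of \eqref{D31} with $c_0=1/2$, and your strip argument on $\{-\tau<y_n<0\}$ showing $w_\tau\not\equiv 0$, are details the paper uses without justification.
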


\begin{theorem}\label{D5}
Let $u\in \mathcal{L}_{2s}\cap C^2(\mathbb{R}^n)$ be a uniformly continuous solution of
\begin{align}\label{D51}
(-\Delta)^s u(x)-\Delta u(x)=g(x,  u(x)), ~x\in \mathbb{R}^n
\end{align}
satisfying
\begin{align}\label{D52}
|u(x)| \leq 1~\mbox{ and }~u(x', x_n) \underset{x_n \to \pm \infty}{\longrightarrow} \pm 1~\mbox{ uniformly in }~ x' = (x_1, \cdots, x_{n-1}).
\end{align}

Suppose that $g$ is continuous and that there exists $\gamma>0$ such that
\begin{align}\label{D53}
g~\mbox{ is nonincreasing on }~u\in  [-1, -1 + \gamma]~\mbox{ and on }~u\in [1 - \gamma, 1]
\end{align}
and
\begin{align}\label{D54}
g(x',x_n,u)\leq g(x',\hat x_n,u),~x=(x',x_n)\in \mathbb R^n,~x_n\leq \hat x_n,
\end{align}
Then $u$ is strictly increasing with respect to $x_n$ and depends only on $x_n$.
\end{theorem}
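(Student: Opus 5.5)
We use the sliding method in the $x_n$-direction. For $\tau>0$ set $u^\tau(x)=u(x',x_n+\tau)$ and $w^\tau(x)=u(x)-u^\tau(x)$. The goal is to show $w^\tau\le 0$ for every $\tau>0$. Strict monotonicity in $x_n$ then follows from a strong maximum principle argument. One-dimensional symmetry follows by sliding along an arbitrary direction $\nu=(\nu',\nu_n)$ with $\nu_n>0$ instead of $e_n$. The same argument gives $u(x+\tau\nu)\ge u(x)$ for all such $\nu$. Letting $\nu_n\to0$ yields $u(x+\tau\nu)\ge u(x)$ for all $\nu\perp e_n$, and using both $\nu'$ and $-\nu'$ shows $u$ is independent of $x'$.

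\textbf{Step 1: $w^\tau\le 0$ for $\tau$ large.} By \eqref{D52}, choose $M$ so large that $u(x)\ge 1-\gamma$ for $x_n\ge M$ and $u(x)\le -1+\gamma$ for $x_n\le -M$. Take $\tau\ge 2M$. Suppose $w^\tau(x)>0$ at some $x$.
\begin{itemize}
\item If $x_n\ge M$, then $u^\tau(x)<u(x)$ with both values in $[1-\gamma,1]$.
\item If $x_n\le -M$, then $u^\tau(x)<u(x)\le -1+\gamma$, so both values lie in $[-1,-1+\gamma]$.
\item The strip $|x_n|<M$ is excluded because there $x_n+\tau\ge M$ forces $u^\tau(x)\ge 1-\gamma$.
\end{itemize}
Suppose $\sup w^\tau=A>0$. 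Since $w^\tau\to0$ uniformly as $|x_n|\to\infty$ and $u$ is uniformly continuous, the supremum is attained or almost attained in the region $x_n\ge M$ or $x_n\le-M$. In the unattained case, translate in $x'$ and pass to a limit, which is permitted by uniform continuity and Arzel\`a–Ascoli.

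At a point $\bar x$ where the supremum is attained we have $-\Delta w^\tau(\bar x)\ge0$, and
\[
(-\Delta)^s w^\tau(\bar x)=C_{n,s}\,P.V.\int\frac{A-w^\tau(y)}{|\bar x-y|^{n+2s}}\,dy>0
\]
unless $w^\tau$ is constant. On the other hand,
\[
(-\Delta)^s w^\tau-\Delta w^\tau=g(x,u)-g(x',x_n+\tau,u^\tau)\le g(x,u)-g(x,u^\tau)\le0,
\]
where the first inequality uses \eqref{D54} and the second uses \eqref{D53} together with $u^\tau<u$. This is a contradiction. Alternatively, one can apply Theorem \ref{D3} directly on $\Omega=\{|x_n|>M\}\cap\{w^\tau>0\}$, noting that the complement contains the slab $\{|x_n|\le M\}$. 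However, \eqref{D31} fails for a slab, so the direct limit argument above is safer. In the non-attained case, we introduce an approximate maximum point and a localized perturbation, or use compactness of $x'$-translates to obtain a limiting function that attains the supremum.

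\textbf{Step 2: decrease $\tau$.} Set $\tau_0=\inf\{\tau>0: w^\mu\le0 \text{ for all }\mu\ge\tau\}$ and suppose $\tau_0>0$.
\begin{itemize}
\item First, $w^{\tau_0}\not\equiv0$ by the limits $\pm1$.
\item By the strong maximum principle (the $(-\Delta)^s$ integral at a zero maximum), $w^{\tau_0}<0$ everywhere.
\item On the compact-in-$x_n$ strip $|x_n|\le M$, we need $\sup_{|x_n|\le M}w^{\tau_0}<0$ uniformly in $x'$. If this fails, take $x^k$ with $w^{\tau_0}(x^k)\to0$, translate in $x'$, and pass to a limit $\bar u$. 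The limit solves an equation of the same type. Translation invariance in $x'$ is an issue if $g$ depends on $x'$, and is handled by using \eqref{D54} together with a sup/inf limiting inequality. The limit $\bar w^{\tau_0}$ attains the value $0$, which contradicts the strong maximum principle.
\end{itemize}
By uniform continuity, $w^{\tau}<0$ on $|x_n|\le M$ for $\tau\in(\tau_0-\varepsilon,\tau_0]$. Repeating Step 1 on $|x_n|>M$ for these $\tau$ gives $w^\tau\le0$ everywhere, because any positive maximum must lie where monotonicity of $g$ applies. This contradicts the definition of $\tau_0$, so $\tau_0=0$.

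\textbf{Main obstacle.} The main obstacle is the lack of compactness in $\mathbb{R}^n$: suprema may not be attained. The remedy is to pass to limits of $x'$-translates, which uniform continuity permits, and to check that the limiting inequality retains the correct sign despite the $x$-dependence of $g$. I would first try a perturbation argument: consider $w^\tau(x)+\varepsilon\,\psi(x-x^k)$ with a bump function $\psi$, so that a genuine maximum exists. The operator $(-\Delta)^s-\Delta$ applied to the bump is $O(\varepsilon)$, and this error is absorbed.
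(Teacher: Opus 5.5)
Your architecture matches the paper's. You start at $\tau\ge 2M$, slide down to $\tau_0$, treat the strip $|x_n|\le M$ separately from $|x_n|>M$, use the nonlocal strong maximum principle for strictness, and slide along tilted directions for symmetry. Your bump-perturbation argument works for Step 1 and for the region $|x_n|>M$ in Step 2. Since $w^\tau\to0$ as $|x_n|\to\infty$, approximate maximizers stay in a bounded $x_n$-range, which gives $(-\Delta)^s(w^\tau+\varepsilon_k\psi_k)(\tilde x^k)\ge c\,\sup w^\tau$ with $c$ independent of $k$; the $O(\varepsilon_k)$ error cannot match this. One small correction: the strip is not excluded in Step 1. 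There $w^\tau>0$ can occur, but it forces $u>u^\tau\ge 1-\gamma$, so the sign condition holds wherever $w^\tau>0$ and the location of the supremum is irrelevant. The paper handles your concern about \eqref{D31} by subtracting $\frac12\sup w^\tau$, so that the exterior set contains a half-space and Theorem \ref{D3} applies.

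The genuine gap is the key step $\sup_{|x_n|\le M}w^{\tau_0}<0$. You claim the $x'$-translates converge to some $\bar u$ that ``solves an equation of the same type'', and then apply the strong maximum principle to $\bar w^{\tau_0}$. Nothing supports this. Uniform continuity gives only locally uniform convergence of $u(\cdot+x^k)$, with no control of second derivatives, so $(-\Delta)^s u-\Delta u$ cannot be passed to the limit classically. $g$ is merely continuous in $x'$, so $g(\cdot+x^k,\cdot)$ need not converge. And \eqref{D54} concerns only $x_n$, so it cannot handle the $x'$-dependence. A viscosity-stability repair would have to be run on the translates of $w^{\tau_0}$ themselves, since differences of viscosity solutions are not subsolutions in general, and you have not set this up.

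The paper needs no limiting equation. It applies the bump perturbation at the near-zero points. At a maximizer $\tilde x^k$ of $w^{\tau_0}+\varepsilon_k\psi_k$ one has $-\Delta(w^{\tau_0}+\varepsilon_k\psi_k)(\tilde x^k)\ge0$, and the equation together with \eqref{D54} gives
\[
0\le(-\Delta)^s(w^{\tau_0}+\varepsilon_k\psi_k)(\tilde x^k)\le g(\tilde x^k,u(\tilde x^k))-g(\tilde x^k,u^{\tau_0}(\tilde x^k))+C\varepsilon_k\to0 .
\]
The integrand of $(-\Delta)^s$ at a maximum is nonnegative, so this forces $\int_{B_r^c(\tilde x^k)}\frac{-w^{\tau_0}(y)}{|\tilde x^k-y|^{n+2s}}\,dy\to0$ for every $r>0$. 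After that, only $u$ itself must converge. By Arzel\`a--Ascoli, $u(\cdot+\tilde x^k)\to u_\infty$ locally uniformly, and the limit satisfies $u_\infty(x',x_n+\tau_0)=u_\infty(x',x_n)$. Because $\tilde x^k_n$ stays bounded, $u_\infty\to\pm1$ as $x_n\to\pm\infty$, which contradicts this $\tau_0$-periodicity.

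So in this step the bump error is not absorbed into an immediate contradiction as in Step 1. It only makes the nonlocal integral vanish, and the contradiction comes from periodicity versus the limits $\pm1$. Both this argument and any repair of yours use that $g(x,a)-g(x,b)\to0$ as $a-b\to0$ uniformly in $x$, which the paper assumes tacitly.
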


\begin{remark}
Theorem~\ref{D5} extends Theorem~1.2 in \cite{BVDV}, where stronger regularity and structural assumptions on $g$ are imposed. In particular, \cite{BVDV} assumes that $g(x,u(x))=g(u),~u\in C^3(\mathbb R^n) \cap W^{4,\infty}(\mathbb R^n)$,  and  $g\in C^1(\mathbb R)$ with $ \underset{|u|\geq 1}{\sup}~g'(u)<0.$
\end{remark}

\begin{remark}
Theorem~\ref{D5} is closely related to the well-known \textbf{Gibbons' conjecture} \cite{GG}. This conjecture asserts that if $u$ solves
\begin{align}\label{G1}
-\Delta u(x)=u(x)-u^3(x),~x\in \mathbb{R}^n
\end{align}
 satisfies $|u(x)| \leq 1$, and
\begin{align}\label{G2}
u(x', x_n) \underset{x_n \to \pm \infty}{\longrightarrow} \pm 1~\mbox{ uniformly in }~ x' \in \mathbb{R}^{n-1},
\end{align}
then $u$ is monotone in the $x_n$-direction and depends only on $x_n$. This conjecture can be regarded as a weakened form of \it{De Giorgi’s conjecture}. In particular, when $g(x,u)=u-u^{3}$, equation \eqref{D51}  reduces to a mixed fractional Allen--Cahn equation.

It is worth noting that the classical \it{De Giorgi's conjecture} corresponds to a stronger formulation of this problem. More precisely, it is referred to as De Giorgi’s conjecture if condition \eqref{G2} is replaced by
\begin{align}\label{G3}
\lim_{x_n \to \pm \infty} u(x',x_n)=\pm 1,\quad \forall\, x'\in \mathbb{R}^{n-1},
\quad \frac{\partial u}{\partial x_n}>0.
\end{align}
As an illustrative example, choosing $g(x,u)=u-u^{3}$ in \eqref{D51} yields the mixed fractional Allen-Cahn equation.
\end{remark}

\subsection{  Parabolic equations involving mixed local and nonlocal operators}
\
\newline
\indent In this section, we first establish a narrow region principle for parabolic equations involving mixed local and nonlocal operators. This principle plays a crucial role in the application of the sliding method in the space-time domain $\Omega \times \mathbb{R}$.

\begin{theorem}\label{N1} (Narrow region principle in unbounded domains)
Let $\Omega\subset \mathbb{R}^n$ be a bounded  domain. Suppose that $U_\tau (x,t)=u_\tau(x,t)-u(x,t)\in (C^2(\Omega)\cap {\tilde {\mathcal L}}_{2s})\times (C^1(\mathbb R)\cap {\mathcal L}^-_\alpha (\mathbb R))$  is bounded from below in $\overline{\Omega}\times \mathbb R$ and for any $t\in \mathbb R$, $U_\tau(\cdot,t)$ is lower semicontinuous on $\overline{\Omega}$.
Assume that
\begin{equation}\label{N11}
\left\{
\begin{array}{ll}
{\partial^\alpha_ t}U_\tau(x, t)+(-\Delta)^s U_\tau (x,t)-\Delta U_\tau (x,t) +C(x,t)U_\tau (x,t)\geq0, ~&(x,t)\in\Omega \times \mathbb{R},\\
U_\tau (x,t)\geq 0,~&(x,t)\in\Omega^c \times \mathbb{R},\\
 \end{array}
\right.
\end{equation}
where $C(x,t)$ is bounded from below. Suppose further that $\Omega$ is narrow in the $x_n$-direction and satisfies
\begin{equation}\label{1122}
    l_n(\Omega)|\underset{\Omega\times \mathbb R}{\inf}C(x,t)|^{\frac{1}{2s}}\leq C,
\end{equation}
where $l_n(\Omega)$ denotes the width of $\Omega$ in the $x_n$-direction and $C>0$ is a constant. Then
\begin{align}\label{N13}
U_\tau(x,t) \geq 0,~(x, t) \in \Omega \times \mathbb{R}.
\end{align}
\end{theorem}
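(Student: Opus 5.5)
We argue by contradiction. Suppose the conclusion \eqref{N13} fails, so that
\[
m:=\inf_{\Omega\times\mathbb R}U_\tau(x,t)<0 .
\]
Since $U_\tau$ is bounded from below, $m$ is finite. Because the time domain is all of $\mathbb R$, the infimum need not be attained, and we work with a minimizing sequence instead. Choose $(x^k,t_k)\in\Omega\times\mathbb R$ with
\[
U_\tau(x^k,t_k)\to m .
\]
Lower semicontinuity on $\overline\Omega$ together with $U_\tau\ge0$ on $\Omega^c$ keeps $x^k$ away from the region where $U_\tau\ge0$ in the limit.

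To make the limit attained, we perturb in time. Let $\varepsilon_k=m-U_\tau(x^k,t_k)\cdot$(appropriately, $\varepsilon_k=U_\tau(x^k,t_k)-m\to0$). Take a smooth cutoff $\eta(t)$ with $\eta=1$ on $[-1/2,1/2]$ and support in $[-1,1]$, and set $\eta_k(t)=\eta(t-t_k)$. Define
\[
V_k(x,t)=U_\tau(x,t)-\varepsilon_k\eta_k(t).
\]
Then $V_k(x^k,t_k)\le m-\varepsilon_k+\varepsilon_k\cdot$ (adjusting constants, e.g. using $2\varepsilon_k$), while $V_k\ge m$ outside $\overline\Omega\times[t_k-1,t_k+1]$. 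Lower semicontinuity then gives a minimum point $(\bar x^k,\bar t_k)\in\overline\Omega\times[t_k-1,t_k+1]$ of $V_k$ with $V_k(\bar x^k,\bar t_k)\le U_\tau(x^k,t_k)-\varepsilon_k<0$, so $\bar x^k\in\Omega$. At this point the local terms have the right signs: $\partial_tV_k=0$ when $\alpha=1$, and $-\Delta V_k\le0$ in $x$ (spatial Hessian nonnegative). The Marchaud derivative satisfies
\[
\partial_t^\alpha V_k(\bar x^k,\bar t_k)=C_\alpha\int_{-\infty}^{\bar t_k}\frac{V_k(\bar x^k,\bar t_k)-V_k(\bar x^k,\tau)}{(\bar t_k-\tau)^{1+\alpha}}\,d\tau\le0 .
\]
Finally, $\partial_t^\alpha\eta_k$ and $(-\Delta)^s$ of the $x$-independent term are bounded, or zero for the spatial terms, by a constant $C_0$, so all perturbation errors are $O(\varepsilon_k)$.

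The key quantitative estimate is the nonlocal term, and this is the step where narrowness enters. Write $\bar x=\bar x^k$. For $y\in\Omega^c$ we have $V_k(y,\bar t_k)\ge0$, so
\[
(-\Delta)^sV_k(\bar x,\bar t_k)\le C_{n,s}\int_{\Omega^c}\frac{V_k(\bar x,\bar t_k)}{|\bar x-y|^{n+2s}}\,dy .
\]
Let $l=l_n(\Omega)$. The set $\Omega^c$ contains the two half-spaces beyond the slab of width $l$ in the $x_n$-direction. Following the computation behind Theorem~\ref{D1}, both contain a set such as $\{y: l<|y_n-\bar x_n|,\ |y'-\bar x'|<|y_n-\bar x_n|\}$, whose integral is at least $c_1/l^{2s}$. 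This yields
\[
(-\Delta)^sV_k(\bar x,\bar t_k)\le \frac{c_1}{l^{2s}}\,V_k(\bar x,\bar t_k).
\]

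Now insert everything into the inequality \eqref{N11}, evaluated via $U_\tau=V_k+\varepsilon_k\eta_k$:
\[
0\le \partial_t^\alpha U_\tau+(-\Delta)^sU_\tau-\Delta U_\tau+CU_\tau\le \Bigl(\frac{c_1}{l^{2s}}+C(\bar x,\bar t_k)\Bigr)V_k(\bar x,\bar t_k)+C_0\varepsilon_k .
\]
Here $C$ may be negative, but $C(\bar x,\bar t_k)\ge\inf C=:-C_*$. The narrowness condition \eqref{1122} gives $C_*\,l^{2s}\le C^{2s}$, and choosing the constant in \eqref{1122} small relative to $c_1$ ensures $c_1/l^{2s}-C_*\ge c_1/(2l^{2s})>0$.

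The main obstacle is the interaction between the non-attained infimum and the nonlocal time derivative, since the minimum of $V_k$ is only approximate. The perturbation handles this because $V_k(\bar x,\bar t_k)\le m/2<0$ for large $k$. Hence the right-hand side above is at most $\frac{c_1}{2l^{2s}}\cdot\frac m2+C_0\varepsilon_k<0$ for large $k$, which is a contradiction. The case $\inf C\ge0$ is easier, since then no narrowness is needed. The same argument, with the signs reversed, is the one used in Theorem~\ref{D1}.
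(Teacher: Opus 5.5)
Your proposal follows the paper's proof. It uses the same $x$-independent time cutoff $V_k=U_\tau-\varepsilon_k\eta_k(t)$ with $\varepsilon_k=U_\tau(x^k,t_k)-m$. This is exactly the paper's $\varepsilon_k=\bar m-\bar m_k$, and no doubling is needed, since it already gives $V_k(x^k,t_k)=m$. The rest also matches: the signs of the time derivative and of $-\Delta V_k$ at the minimum, the bound $C_{n,s}\int_{\Omega^c}|\bar x-y|^{-n-2s}\,dy\ge c_1l^{-2s}$ from narrowness (your cone argument supplies what the paper only asserts), and the contradiction once the constant in \eqref{1122} is small.

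One step is misstated, though. On $\Omega^c$ you only have $V_k(y,\bar t_k)\ge-\varepsilon_k\eta_k(\bar t_k)$, not $V_k\ge0$. So $(-\Delta)^sV_k(\bar x,\bar t_k)\le c_1l^{-2s}V_k(\bar x,\bar t_k)$ is not justified as written. Use instead that the perturbation does not depend on $x$: for $y\in\Omega^c$ and large $k$, $V_k(\bar x,\bar t_k)-V_k(y,\bar t_k)=U_\tau(\bar x,\bar t_k)-U_\tau(y,\bar t_k)\le U_\tau(\bar x,\bar t_k)<0$. This gives $(-\Delta)^sV_k(\bar x,\bar t_k)\le c_1l^{-2s}U_\tau(\bar x,\bar t_k)$. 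Then run the final inequality with $U_\tau(\bar x,\bar t_k)\le m+\varepsilon_k$ in place of $V_k(\bar x,\bar t_k)$, i.e. $0\le(c_1l^{-2s}+\inf C)\,U_\tau(\bar x,\bar t_k)+C_0\varepsilon_k$, exactly as the paper does. This also spares you trading $C\,U_\tau$ for $C\,V_k$. That trade would cost $C(\bar x,\bar t_k)\varepsilon_k\eta_k(\bar t_k)$, which is uncontrolled because $C$ is only bounded below.

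Two smaller points. First, $\bar x^k\in\Omega$ follows from $V_k(\bar x^k,\bar t_k)\le m<-\varepsilon_k\le V_k$ on $\Omega^c\times\mathbb R$ for large $k$, not merely from $V_k(\bar x^k,\bar t_k)<0$. Second, like the paper, you tacitly use lower semicontinuity of $U_\tau$ jointly in $(x,t)$ on $\overline\Omega\times[t_k-1,t_k+1]$ to obtain the minimizer; the hypothesis only gives it in $x$ for each fixed $t$.
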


As in the elliptic case, we impose an exterior condition on $u$. Let
$$u(x,t)=\theta(x,t),~x\in \Omega^c,~t \in\mathbb{R}.$$
We further assume the following condition:
$(\textbf{B})$ For any three points $x = (x', x_n)$, $y = (x', y_n)$ and $z = (x', z_n)$ lying on a line segment parallel to the $x_n$-axis, where $y_n < x_n < z_n$ for  $y,~ z \in \Omega^c$, one has
\begin{align*}
\theta(y,t) < u(x,t) <\theta(z,t), ~x \in \Omega,~t \in\mathbb{R}
\end{align*}
and
\begin{align*}
\theta(y,t) \leq \theta(x,t) \leq \theta(z,t), ~x \in \Omega^c,~t\in\mathbb{R}.
\end{align*}

Applying the sliding method, we obtain monotonicity results for parabolic equations involving mixed local and nonlocal operators in  unbounded domains $\Omega \times \mathbb R$.

\begin{theorem}\label{N2} (Monotonicity in bounded domains)
Let $\Omega\subset\mathbb{R}^n$ be a bounded domain that is convex in the $x_n$-direction. Assume that $0<s<1$, $0<\alpha\le 1$, and that
 $u(x,t)\in (C^2(\Omega)\cap {\tilde {\mathcal L}}_{2s})\times (C^1(\mathbb R)\cap {\mathcal L}^-_\alpha (\mathbb R))$ is a solution  of
\begin{equation}\label{N21}
\left\{
\begin{array}{ll}
{\partial^\alpha_ t}u(x, t)+(-\Delta)^s u(x)-\Delta u(x)=h(t,u(x,t)), ~&(x, t) \in \Omega \times \mathbb{R},\\
u(x,t)=\theta(x,t),~&(x,t)\in (\mathbb{R}^n \setminus \Omega)\times \mathbb{R},\\
 \end{array}
\right.
\end{equation}
and satisfies $(\textbf{B})$.
Suppose that $h$ is Lipschitz continuous. Then $u$ is strictly increasing with respect to $x_n$ in $\Omega\times\mathbb{R}$, that is,
\begin{align*}
u(x',x_n+\tau,t)>u(x',x_n,t),~(x',x_n,t),(x',x_n+\tau,t)\in \Omega\times \mathbb{R},~\mbox{ for any }~\tau > 0.
\end{align*}
\end{theorem}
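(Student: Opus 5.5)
We prove Theorem~\ref{N2} by the sliding method in the $x_n$-direction, with Theorem~\ref{N1} supplying the starting position. For $\tau>0$ set
$$u_\tau(x,t)=u(x',x_n+\tau,t),\qquad U_\tau(x,t)=u_\tau(x,t)-u(x,t),$$
and let $\Omega_\tau=\Omega\cap(\Omega-\tau e_n)$ be the set where both $u$ and $u_\tau$ solve the equation. Since $h$ is Lipschitz, on $\Omega_\tau\times\mathbb R$ we have
$$\partial^\alpha_t U_\tau+(-\Delta)^sU_\tau-\Delta U_\tau+C_\tau(x,t)U_\tau=0,\qquad C_\tau=-\frac{h(t,u_\tau)-h(t,u)}{u_\tau-u},$$
where $C_\tau$ is set to $0$ when $u_\tau=u$. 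Thus $|C_\tau|\le L$, the Lipschitz constant, uniformly in $\tau$.

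The exterior condition comes from convexity of $\Omega$ in $x_n$ and condition $(\textbf{B})$. If $x\in\Omega_\tau^c$, then either $x\notin\Omega$ or $x+\tau e_n\notin\Omega$. In each subcase the three-point ordering in $(\textbf{B})$ gives $u(x+\tau e_n,t)\ge u(x,t)$: if both points lie in $\Omega^c$, use the second inequality; if one lies in $\Omega$, use the first. Hence $U_\tau\ge0$ on $\Omega_\tau^c\times\mathbb R$. Boundedness of $U_\tau$ from below, needed in Theorem~\ref{N1}, would be assumed as part of the regularity hypotheses or obtained from boundedness of $u$ on $\overline\Omega\times\mathbb R$ via $(\textbf{B})$.

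\emph{Step 1 (start).} Let $l$ be the width of $\Omega$ in the $x_n$-direction. For $\tau$ close to $l$, $\Omega_\tau$ has width at most $l-\tau$, so condition \eqref{1122} holds with $L$ in place of $|\inf C|$. Theorem~\ref{N1} then gives $U_\tau\ge0$ in $\Omega_\tau\times\mathbb R$.

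\emph{Step 2 (slide).} Define $\tau_0=\inf\{\tau\in(0,l):U_\mu\ge0 \text{ on } \Omega_\mu\times\mathbb R \text{ for all } \mu\ge\tau\}$. We claim $\tau_0=0$. Suppose instead $\tau_0>0$; by continuity $U_{\tau_0}\ge0$.

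First show $U_{\tau_0}>0$ in $\Omega_{\tau_0}\times\mathbb R$. Suppose $U_{\tau_0}(x^0,t_0)=0$ at an interior point. This point is a minimum, so $\partial_t^\alpha U_{\tau_0}\le0$, $-\Delta U_{\tau_0}\le0$ and $C_{\tau_0}U_{\tau_0}=0$ there. The equation then forces $(-\Delta)^sU_{\tau_0}(x^0,t_0)\ge0$. But
$$(-\Delta)^sU_{\tau_0}(x^0,t_0)=-C_{n,s}\,P.V.\!\int_{\mathbb R^n}\frac{U_{\tau_0}(y,t_0)}{|x^0-y|^{n+2s}}\,dy\le0,$$
so $U_{\tau_0}(\cdot,t_0)\equiv0$. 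This is impossible, because $(\textbf{B})$ gives a strict inequality near the boundary of $\Omega$: take $x$ with $x\in\Omega$ and $x+\tau_0e_n\notin\Omega$.

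Next decrease $\tau$ slightly. Choose $\delta$ so that every strip of width $\delta$ satisfies \eqref{1122}. Split $\Omega_{\tau_0-\varepsilon}$ into a compact part $K\subset\Omega_{\tau_0}$ and a remainder whose $x_n$-width is less than $\delta$. On the remainder, apply Theorem~\ref{N1} to $U_{\tau_0-\varepsilon}$, provided we know $U_{\tau_0-\varepsilon}\ge0$ outside it, in particular on $K$.

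\emph{Main obstacle.} In the elliptic case, positivity on $K$ is uniform by compactness. Here time ranges over $\mathbb R$, so $\inf_{K\times\mathbb R}U_{\tau_0}$ could be $0$ with the infimum approached only as $t_k\to\pm\infty$. We handle this by contradiction. Suppose there are $\varepsilon_k\to0$ and points $(x^k,t_k)$ with $U_{\tau_0-\varepsilon_k}(x^k,t_k)<0$. Form the time translates $u_k(x,t)=u(x,t+t_k)$. Using uniform bounds (Lipschitz $h$, $C^2\times C^1$ regularity, and boundedness, which we accept as standard parabolic estimates), extract a subsequence converging locally uniformly to a limit $u_\infty$ solving the same problem and still satisfying $(\textbf{B})$. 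The limit $U^\infty_{\tau_0}\ge0$ attains $0$ in $\overline K$ at a finite time. The strong maximum principle argument above then yields a contradiction.

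Alternatively, one may avoid compactness by proving a quantitative form of Theorem~\ref{N1} that allows a negative infimum. Working with $\inf_{\Omega_\tau\times\mathbb R}U_\tau$ and approximate minimizers $(x^k,t_k)$, perturbed by small bump functions so that exact minima exist, we would estimate $(-\Delta)^sU_\tau\le -c\,l^{-2s}\inf U_\tau$ at those points, as in the proof of Theorem~\ref{N1}.

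Either route gives $U_\tau\ge0$ for $\tau\in(\tau_0-\varepsilon,\tau_0]$, contradicting the definition of $\tau_0$. Hence $\tau_0=0$, and the strict-positivity argument, applied at each $\tau\in(0,l)$, yields $U_\tau>0$ in $\Omega_\tau\times\mathbb R$. This is exactly the claimed strict monotonicity in $x_n$.
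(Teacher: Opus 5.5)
Your outline is the paper's own: exterior sign from $(\textbf{B})$, Theorem~\ref{N1} for the start, the infimum $\tau_0$, the strong maximum principle at $\tau_0$, and a compact core $K$ plus a narrow remainder. You have correctly isolated the one real difficulty: positivity of $U_{\tau_0}$ on $K\times\mathbb{R}$ need not be uniform in $t$. The paper does not resolve this. It simply asserts $w_{\tau_0}\le-\tilde C_0$ on $K\times\mathbb{R}$ ``by \eqref{N205}'', and that inference is invalid.

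Here is a counterexample to the inference. Take $\alpha=1$, $h\equiv0$, $\Omega=(-1,1)^n$, $c>0$, and a nondecreasing $\phi$ with $\phi=-1$ on $(-\infty,-1]$ and $\phi=1$ on $[1,\infty)$. Let $v$ solve $(-\Delta)^sv-\Delta v+cv=0$ in $\Omega$ with $v=\phi(x_n)$ outside, and put $u=e^{ct}v(x)$, $\theta=e^{ct}\phi(x_n)$. Since $c>0$, the maximum principle gives $|v|<1$ in $\Omega$, so $(\textbf{B})$ holds, and $v$ is strictly increasing in $x_n$. Yet $U_\tau=e^{ct}\bigl(v(x+\tau e_n)-v(x)\bigr)$ has $\inf_{K\times\mathbb{R}}U_\tau=0$ for every $\tau\in(0,2)$ and every compact $K\subset\Omega_\tau$. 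So this step needs a genuinely new input, and neither of your repairs supplies it.

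In the compactness route, the claim that $u_\infty$ ``still satisfies $(\textbf{B})$'' is false, because locally uniform limits keep only the non-strict inequalities. Once the strong maximum principle gives $U^\infty_{\tau_0}(\cdot,\bar t)\equiv0$, you need a point where it is positive, and none is available. Together with $U^\infty_\mu\ge0$ for all $\mu\ge\tau_0$, you only learn that $u_\infty(\cdot,\bar t)$ is $\tau_0$-periodic and nondecreasing in $x_n$, i.e.\ constant on vertical lines. That is compatible with everything the limit inherits; in the example, translates with $t_k\to-\infty$ converge to $u_\infty\equiv0$. The hypotheses also do not supply the uniform regularity bounds or the passage to the limit in the equation, which has a $t$-dependent $h$ and an infinite-memory Marchaud derivative.

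The quantitative route is circular. The estimate from the proof of Theorem~\ref{N1} needs $U_\tau\ge0$ on the complement of the region containing the approximate minimizers. If they lie in or near $K$, that complement contains $K$, where the sign is exactly what is unknown. Using all of $\Omega_\tau$ instead gives only $m\bigl(C\,l_n(\Omega_\tau)^{-2s}+\inf C_\tau\bigr)\le o(1)$, which is no contradiction since $\Omega_\tau$ is not narrow.

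The gap does close under an extra uniform-in-time hypothesis. Suppose the strict inequalities in $(\textbf{B})$ hold with a margin $\delta>0$ independent of $t$ on a set $S$ of positive measure (the example violates this as $t\to-\infty$), and $u$ is uniformly continuous in $x$, uniformly in $t$. Then run the bump-function argument from Step~2 of the proof of Theorem~\ref{T3} at approximate minimizers of $U_{\tau_0}$ in $K\times\mathbb{R}$, with bump radius small compared with $\mathrm{dist}(K,\Omega_{\tau_0}^c)$. It yields $\int_{B_r^c(\bar x^k)}U_{\tau_0}(y,\bar t_k)|\bar x^k-y|^{-n-2s}\,dy\to0$, contradicting the margin on $S$. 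Hence $\inf_{K\times\mathbb{R}}U_{\tau_0}>0$, with no compactness in $t$.

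Finally, $\Omega_{\tau_0-\varepsilon}\setminus K$ is a boundary layer, not thin in the $x_n$-direction. Use smallness of measure instead, since $\int_{D^c}|x-y|^{-n-2s}dy\ge c\,|D|^{-2s/n}$ for $x\in D$.
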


To apply the sliding method in unbounded spatial domains, we establish the following maximum principle.

\begin{theorem} \label{T2} (Maximum principle in unbounded domains)
Let $D \subset \mathbb{R}^n$ be an open set, possibly unbounded and disconnected. For any $x^0 \in \mathbb{R}^n$,
\begin{align}\label{T21}
\underset{R \to +\infty}{\underline{\lim}}\frac{|B_R(x^0) \cap D^c|}{|B_R(x^0)|} \geq c_0 > 0.
\end{align}

Let  $0<s<1,~0<\alpha \leq 1,$ $u(x,t)\in (C^2(D)\cap {\tilde {\mathcal L}}_{2s})\times (C^1(\mathbb R)\cap {\mathcal L}^-_\alpha (\mathbb R))$ be bounded from above and satisfy
\begin{equation}\label{T22}
\left\{
\begin{array}{ll}
{\partial^\alpha_ t}u(x, t) + (-\Delta)^s u(x, t)-\Delta u(x,t) \leq 0, & \mbox{ at the points in } D\times \mathbb{R} ~\mbox{ where }~u(x,t) > 0,\\
u(x,t) \leq 0, &\mbox{ in } D^c \times \mathbb{R},
 \end{array}
\right.
\end{equation}
then
\begin{align}\label{T23}
u(x, t) \leq 0, ~(x,t)\in D \times \mathbb{R}.
\end{align}
\end{theorem}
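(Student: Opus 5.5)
Suppose, for contradiction, that
\[
A:=\sup_{D\times\mathbb R}u>0 .
\]
We cannot simply evaluate the equation at a maximum point: $D$ is unbounded and $t$ ranges over all of $\mathbb R$, so the supremum need not be attained. The plan is to run a perturbation argument at a near-maximum point $(x^k,t^k)$ and then derive a contradiction from the nonlocal term, using the density condition \eqref{T21}. This mirrors the proof of Theorem \ref{D3}, with the Marchaud derivative as an additional nonnegative contribution.

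\textbf{Step 1: localized perturbation.} Choose $(x^k,t^k)\in D\times\mathbb R$ with $u(x^k,t^k)\ge \varepsilon_k A$, where $\varepsilon_k\nearrow 1$. Let $\eta\in C_0^\infty(B_1(0)\times(-1,1))$ satisfy $0\le\eta\le 1$ and $\eta(0,0)=1$, and set
\[
\eta_k(x,t)=\eta\Big(\frac{x-x^k}{r_k},\frac{t-t^k}{r_k^{2s}}\Big).
\]
The radius $r_k$ is large and will be tied to the $R$ in \eqref{T21}. Consider $u+\delta_k\eta_k$ with $\delta_k=(1-\varepsilon_k)A$. Since $u\le A$ everywhere, any point outside the support has $u+\delta_k\eta_k\le A$, while at the centre $u+\delta_k\eta_k\ge A$. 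Hence this function attains its maximum over $\mathbb R^n\times\mathbb R$ at some $(\bar x^k,\bar t^k)$ in the support, with value at least $A$. Because $u\le0$ on $D^c\times\mathbb R$ and $\delta_k\to0$, we get $\bar x^k\in D$ and $u(\bar x^k,\bar t^k)>0$ for $k$ large.

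At $(\bar x^k,\bar t^k)$ the equation applies and the maximum structure gives:
\begin{itemize}
\item $-\Delta(u+\delta_k\eta_k)\ge0$;
\item $\partial_t^\alpha(u+\delta_k\eta_k)\ge0$, since the integrand is nonnegative when $\bar t^k$ maximizes in $t$; when $\alpha=1$ we have $\partial_t=0$ at an interior maximum.
\end{itemize}
The perturbation terms are small: $|\Delta\eta_k|\lesssim r_k^{-2}$, $|(-\Delta)^s\eta_k|\lesssim r_k^{-2s}$, and $|\partial_t^\alpha\eta_k|\lesssim r_k^{-2s\alpha}$. So these are $O(\delta_k)$, which is harmless.

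\textbf{Step 2: nonlocal lower bound.} For the fractional term write $w=u+\delta_k\eta_k$ and $M=w(\bar x^k,\bar t^k)\ge A$. Then
\[
(-\Delta)^s w(\bar x^k,\bar t^k)=C_{n,s}\,\mathrm{P.V.}\!\int\frac{M-w(y,\bar t^k)}{|\bar x^k-y|^{n+2s}}\,dy .
\]
The integrand is nonnegative, so we may restrict to $y\in D^c\cap B_{R}(\bar x^k)\setminus \mathrm{supp}\,\eta_k$. There $w\le0$, so the integrand is at least $A/(2R)^{n+2s}$.

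To use \eqref{T21} we need $|B_R(\bar x^k)\cap D^c|\ge \frac{c_0}{2}|B_R|$. This density must hold uniformly in the centre $\bar x^k$, not just for one fixed $x^0$. I would obtain it from the fixed-$x^0$ statement by choosing $R$ large relative to $|\bar x^k-x^0|$, or, as is done in analogous results, by interpreting \eqref{T21} uniformly. With $r_k\ll R$, the support of $\eta_k$ removes only a negligible portion of the ball. This yields
\[
(-\Delta)^s w(\bar x^k,\bar t^k)\ge c\,A\,R^{-2s}.
\]

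\textbf{Step 3: contradiction.} Combining Steps 1 and 2 with \eqref{T22},
\[
0\ge \partial_t^\alpha u+(-\Delta)^s u-\Delta u\ \ge\ cAR^{-2s}-C\delta_k\big(r_k^{-2s\alpha}+r_k^{-2s}+r_k^{-2}\big).
\]
Fix $R$ and $r_k\le R$ first. Then $\delta_k\to0$ makes the right side positive, a contradiction. Therefore $A\le0$, which gives \eqref{T23}.

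The main obstacle is Step 2's dependence on $k$: the lower bound must hold with $R$ independent of $k$, even though the points $\bar x^k$ may escape to infinity. I would first try taking the liminf in \eqref{T21} uniformly over centres, and otherwise extract a subsequence and use translation of the problem. A further delicate point is the time direction: the near-maximum points are approached only approximately in $t$, so the Marchaud term must be controlled by the positivity at a genuine maximum of $w$. That is exactly why the perturbation is taken in space–time rather than in space alone.
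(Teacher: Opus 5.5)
Your perturbation step and the direct lower bound on $(-\Delta)^s w$ are fine. Step 3 has a genuine gap, and it is the one you flag yourself. You fix $R$ independently of $k$, which requires $|B_R(\bar x^k)\cap D^c|\ge \frac{c_0}{2}|B_R|$ uniformly along the maximizers, and \eqref{T21} does not give this. The $\liminf$ in \eqref{T21} is the same for every centre, but the radius beyond which it takes effect grows with the distance of the centre from $x^0$. For example, $D=\mathbb{R}^n_+$ satisfies \eqref{T21} with $c_0=\frac12$, yet $B_R(\bar x)\cap D^c=\emptyset$ whenever $\bar x_n>R$, and nothing stops the near-maximum points from drifting to $x_n\to+\infty$.

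Neither of your patches closes this. A uniform version of \eqref{T21} is a stronger hypothesis, and it fails for exactly the half-spaces to which the paper applies the theorem (Theorem~\ref{s1}, and Step~1 of Theorem~\ref{T3}). A translation/compactness argument needs uniform continuity or estimates for $u$, which are not assumed. If instead you let $R=R_k\to\infty$, the gain $cAR_k^{-2s}$ tends to zero, and ``$\delta_k\to0$'' no longer beats the errors automatically. With your time scaling $r_k^{2s}$ and $r_k\sim R_k$, the Marchaud error $\delta_k r_k^{-2s\alpha}$ decays more slowly than $R_k^{-2s}$ when $\alpha<1$. Since $R_k$ is dictated by where $x^k$ lies, you have no control over $\delta_k R_k^{2s(1-\alpha)}$.

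What is missing is scale invariance, which is the mechanism behind the paper's proof. Choose $R_k\ge1$ from \eqref{T21} at the centre $x^k$, fixed before perturbing. Take the perturbation radius $r_k=R_k$, and scale time by $r_k^{2s/\alpha}$ rather than $r_k^{2s}$, as in the paper's cylinder $B_{r_k}(x^k)\times[t_k-r_k^{2s/\alpha},t_k+r_k^{2s/\alpha}]$.

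Then $|\partial_t^\alpha\eta_k|+|(-\Delta)^s\eta_k|+|\Delta\eta_k|\le Cr_k^{-2s}$, using $r_k^{-2}\le r_k^{-2s}$ for $r_k\ge1$. This is where the different scalings of $-\Delta$ and $(-\Delta)^s$ are reconciled.

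On the other side, $\bar x^k\in B_{r_k}(x^k)$ gives $B_{2R_k}(\bar x^k)\supset B_{R_k}(x^k)$. Also $w\le\delta_k$ on $D^c$. So your direct estimate yields $(-\Delta)^s w(\bar x^k,\bar t^k)\ge c\,(A-\delta_k)R_k^{-2s}$ with $c=c(n,s,c_0)$. Every term now scales like $R_k^{-2s}$, and you obtain $c\,(A-\delta_k)\le C\delta_k$, which is independent of $R_k$. This gives the contradiction.

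In the paper the same cancellation comes from multiplying the average inequality of Lemma~\ref{L1} by $r^{2s}$, with the averaging radius identified with the perturbation radius. This turns the perturbation error into $C_5\varepsilon_k$ and the exterior term into $(1-\bar C)(M+\varepsilon_k)$, both free of $k$-dependent radii. Once the radii are linked this way, your direct lower bound on $(-\Delta)^s w$ is a cleaner substitute for the paper's average-inequality-plus-annulus estimate.
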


As an immediate consequence, we obtain monotonicity in the upper half-space.
\begin{theorem}\label{s1} (Monotonicity in the upper-half space)
Let  $0<s<1,~0<\alpha \leq 1,$ $u(x,t)\in \big(C^2(\mathbb R_+^n)\cap {\tilde {\mathcal L}}_{2s}\cap C(\overline{\mathbb{R}_+^n})\big)\times (C^1(\mathbb R)\cap {\mathcal L}^-_\alpha (\mathbb R))$
 be a nonnegative bounded solution of
\begin{equation}\label{s11}
\left\{
\begin{array}{ll}
{\partial^\alpha_ t}u(x, t) + (-\Delta)^s u(x, t)-\Delta u(x,t) = h(t, u(x, t)), ~& (x, t) \in \mathbb{R}_+^n \times \mathbb{R}, \\
u(x, t) > 0, ~& (x, t) \in \mathbb{R}_+^n \times \mathbb{R}, \\
u(x, t) = 0, ~& (x, t) \in (\mathbb{R}^n \setminus \mathbb{R}_+^n) \times \mathbb{R},
 \end{array}
\right.
\end{equation}
where $h(t,u)$ is continuous and nonincreasing in $u$. Then
\begin{align*}
u(x,t) ~\mbox{ is strictly increasing with respect to }~ x_n~\mbox{in}~\mathbb{R}_+^n \times \mathbb{R}.
\end{align*}
\end{theorem}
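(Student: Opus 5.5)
We use the sliding method in the space–time half-space, with Theorem~\ref{T2} as the engine. For $\tau>0$ set
\[
u_\tau(x,t)=u(x',x_n+\tau,t),\qquad w_\tau(x,t)=u(x,t)-u_\tau(x,t).
\]
We want $w_\tau\le 0$ in $\mathbb R^n_+\times\mathbb R$ for every $\tau>0$. The test set is
\[
D=\mathbb R^n_+,\qquad\text{with}\qquad D^c\supset\{x_n\le 0\}.
\]
It satisfies \eqref{T21} with $c_0=1/2$ for every $x^0$, since a half-space occupies an asymptotic proportion $1/2$ of large balls centred anywhere.

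\textbf{Exterior condition.} For $x_n\le 0$ we have $u(x,t)=0$ while $u_\tau\ge 0$, because $u$ is nonnegative. Hence $w_\tau\le 0$ on $D^c\times\mathbb R$. Moreover $w_\tau$ is bounded above, since $u$ is bounded.

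\textbf{Differential inequality.} By linearity,
\[
\partial_t^\alpha w_\tau+(-\Delta)^s w_\tau-\Delta w_\tau=h(t,u)-h(t,u_\tau)\qquad\text{in } \mathbb R^n_+\times\mathbb R .
\]
At points where $w_\tau>0$ we have $u>u_\tau$. Since $h$ is nonincreasing in $u$, this gives $h(t,u)-h(t,u_\tau)\le 0$. So \eqref{T22} holds with no zero-order term; this is exactly why monotone $h$ (rather than merely Lipschitz) is assumed. The regularity classes are preserved under translation in $x_n$. Theorem~\ref{T2} then yields $w_\tau\le 0$, that is, $u(x',x_n+\tau,t)\ge u(x',x_n,t)$ for every $\tau>0$. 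Because Theorem~\ref{T2} needs no narrow region or starting position, no continuity-in-$\tau$ argument is needed.

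\textbf{Strict inequality.} This is the main obstacle. Suppose $w_\tau(x^0,t_0)=0$ at some $x^0\in\mathbb R^n_+$ and $t_0$. Since $w_\tau\le 0$ everywhere, this is a global maximum, so $\nabla w_\tau(x^0,t_0)=0$ and $-\Delta w_\tau(x^0,t_0)\ge 0$.

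For the time part:
\begin{itemize}
\item if $\alpha<1$, then $\partial_t^\alpha w_\tau(x^0,t_0)=-C_\alpha\int_{-\infty}^{t_0}\frac{w_\tau(x^0,\tau')}{(t_0-\tau')^{1+\alpha}}\,d\tau'\ge 0$;
\item if $\alpha=1$, then $\partial_t w_\tau(x^0,t_0)=0$.
\end{itemize}

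For the nonlocal part,
\[
(-\Delta)^s w_\tau(x^0,t_0)=-C_{n,s}\,P.V.\int_{\mathbb R^n}\frac{w_\tau(y,t_0)}{|x^0-y|^{n+2s}}\,dy .
\]
On the set $\{-\tau<y_n\le 0\}$ we have $w_\tau(y,t_0)=-u(y',y_n+\tau,t_0)<0$, using the positivity of $u$ in $\mathbb R^n_+$. Hence $(-\Delta)^s w_\tau(x^0,t_0)>0$.

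The right-hand side at this point is $h(t_0,u)-h(t_0,u_\tau)=0$, since $u=u_\tau$ there. Adding up, the left-hand side is strictly positive while the right-hand side vanishes, a contradiction. Therefore $w_\tau<0$ in $\mathbb R^n_+\times\mathbb R$, which is the strict monotonicity claimed.

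The only technical point is checking that the hypotheses of Theorem~\ref{T2} are met precisely, in particular the density condition and the regularity of $w_\tau$ up to $\partial\mathbb R^n_+$. Continuity of $u$ on $\overline{\mathbb R^n_+}$ handles the latter.
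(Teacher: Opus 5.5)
Your proposal is correct and follows the paper's proof essentially step for step. You apply Theorem~\ref{T2} with $D=\mathbb{R}^n_+$, using the trivial exterior condition and the monotonicity of $h$ to get the differential inequality where $w_\tau>0$, then rule out interior zeros of $w_\tau$ because $(-\Delta)^s w_\tau>0$ at a global maximum while the right-hand side vanishes; your explicit use of the strip $\{-\tau<y_n\le 0\}$, where $w_\tau(\cdot,t_0)=-u_\tau(\cdot,t_0)<0$, simply makes precise the paper's remark that $w_{\tau_0}(\cdot,t_0)\not\equiv 0$.
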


Applying the above maximum principle, we obtain a parabolic analogue of Gibbons' conjecture.

\begin{theorem}\label{T3} (One-dimensional symmetry)
Let $0<s<1$, $0<\alpha\le 1$, and let
 $ u(x,t)\in (C^2(\mathbb R^n)\cap {\tilde {\mathcal L}}_{2s})\times (C^1(\mathbb R)\cap {\mathcal L}^-_\alpha (\mathbb R))$ be a uniformly continuous solution of
\begin{align}\label{T31}
\partial^\alpha_tu(x, t) + (-\Delta)^s u(x, t)-\Delta u(x,t) = h(t, u(x, t)), ~ (x, t) \in \mathbb{R}^n \times \mathbb{R}
\end{align}
satisfying $|u(x,t)|\le 1$ and
\begin{align}\label{T32}
u(x', x_n, t) \underset{x_n \to \pm\infty}{\longrightarrow} \pm 1 ~\mbox{ uniformly in }~ x' = (x_1, \cdots, x_{n-1}) ~\mbox{ and in } t.
\end{align}

Suppose  that $h(t, u)$ is continuous in $\mathbb{R} \times ([-1, 1])$ and that  for each fixed $t \in \mathbb{R} $,
\begin{align}\label{T33}
h(t, u) ~\mbox{ is nonincreasing for }~ |u| \geq 1 - \gamma ~\mbox{ with some }~ \gamma > 0.
\end{align}
Then $u$ is strictly increasing with respect to $x_n$ and depends only on $x_n$, that is, $u(x, t) = u(x_n, t).$
\end{theorem}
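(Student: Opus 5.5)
We prove Theorem~\ref{T3} by sliding in the $x_n$-direction. For $\tau>0$ set $u_\tau(x,t)=u(x',x_n+\tau,t)$ and $w_\tau=u_\tau-u$. The goal is to show $w_\tau>0$ on $\mathbb R^n\times\mathbb R$ for every $\tau>0$. Since $\partial^\alpha_t+(-\Delta)^s-\Delta$ is linear and translation invariant, $w_\tau$ satisfies
\[
\partial^\alpha_t w_\tau+(-\Delta)^s w_\tau-\Delta w_\tau = h(t,u_\tau)-h(t,u).
\]

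\textbf{Step 1: $w_\tau\ge0$ for all large $\tau$.} By \eqref{T32}, choose $M$ with $u\ge 1-\gamma$ for $x_n\ge M$ and $u\le -1+\gamma$ for $x_n\le -M$, uniformly in $x'$ and $t$. Take $\tau\ge 2M$ and apply Theorem~\ref{T2} to $v=-w_\tau$, which is bounded, on
\[
D=\{x_n>M\}\cup\{x_n<-M\}.
\]
Then $D^c$ is the slab $\{|x_n|\le M\}$, but the slab does not satisfy the density condition \eqref{T21}. We therefore run the argument of Theorem~\ref{T2} directly on $\{x_n>M\}$ and on $\{x_n<-M\}$, each of whose complement is a half-space; alternatively we apply Theorem~\ref{T2} with $D$ equal to one half-space at a time.

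The needed differential inequality comes from monotonicity of $h$. At a point of $\{x_n>M\}$ where $v>0$ we have $u_\tau<u$ and both lie in $[1-\gamma,1]$, so \eqref{T33} gives $h(t,u_\tau)\ge h(t,u)$. Hence
\[
\partial^\alpha_t v+(-\Delta)^s v-\Delta v\le 0 .
\]
At a point of $\{x_n<-M\}$ where $v>0$, $u_\tau<u\le -1+\gamma$, so $u_\tau$ also lies in $[-1,-1+\gamma]$ and the same inequality holds. On the slab, $x_n+\tau\ge M$, so $u_\tau\ge 1-\gamma$, while $u<1-\gamma$ there after possibly enlarging $M$ slightly (and taking $\gamma$ small). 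Thus $v\le 0$ on the complement.

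To handle the two half-spaces cleanly, apply the maximum principle on $D_1=\{x_n>M\}$ with exterior region $\{x_n\le M\}$, using the fact that $v\le0$ for $x_n\le -M$ as well. That fact in turn is obtained by applying Theorem~\ref{T2} on $D_2=\{x_n<-M\}$ with exterior $\{x_n\ge -M\}$. This creates a circularity. To break it, I would instead use the max-point argument from the proof of Theorem~\ref{T2} on $D$ itself: take a sequence approaching $\sup v>0$, perturb it with a cutoff, and observe that the complement contributes a positive nonlocal term. The half of the ball containing the slab has density bounded below near the approximate maximum points. I expect this adaptation to be the main technical obstacle, since Theorem~\ref{T2} is stated only under \eqref{T21}.

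\textbf{Step 2: decrease $\tau$.} Let $\tau_0=\inf\{\tau>0: w_\mu\ge0 \ \forall \mu\ge\tau\}$, and suppose $\tau_0>0$. Since $u$ is uniformly continuous and $\partial^\alpha_t$ is defined with past data, we argue by contradiction. Take $\tau_k\nearrow\tau_0$ and points where $w_{\tau_k}$ attains an approximate negative minimum $-m_k$. The points must lie in the slab $|x_n|\le M$: outside it, the argument of Step 1 still applies, because $w_{\tau_k}\ge0$ on the slab would imply $w_{\tau_k}\ge0$ everywhere.

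Translate in $x'$ and $t$, and use Arzel\`a--Ascoli (by uniform continuity) to obtain a limit $\bar u$ solving the same equation, with $h$ replaced by a limit along $t$-shifts. Continuity of $h$ on the compact set allows passing to the limit, after possibly first proving equicontinuity in $t$ of $h(t+t_k,\cdot)$, or more simply working at the level of the inequalities. Then $\bar w_{\tau_0}\ge0$, and $\bar w_{\tau_0}$ vanishes at some point of the slab. From the equation at that minimum point, all terms $\partial^\alpha_t\bar w$, $(-\Delta)^s\bar w$ and $-\Delta\bar w$ are $\le 0$, and the right-hand side is $\ge -L\cdot 0=0$ if $h$ is Lipschitz; otherwise we use a strong maximum principle argument. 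The nonlocal integral forces $\bar w_{\tau_0}\equiv0$, so $\bar u$ is $\tau_0$-periodic in $x_n$, contradicting the limits $\pm1$ at $x_n\to\pm\infty$. Hence $\tau_0=0$.

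\textbf{Step 3: conclusion.} Strict monotonicity follows from the same strong maximum principle argument applied to $w_\tau\ge0$, since $w_\tau\equiv0$ would contradict \eqref{T32}. For one-dimensional symmetry, repeat Steps 1--2 with the direction $\nu=(\nu',\nu_n)$, $\nu_n>0$: the uniform limits in \eqref{T32} work the same way. This gives $u(x+\tau\nu,t)\ge u(x,t)$ for all such $\nu$. Letting $\nu_n\to0$ yields monotonicity in both $\pm\nu'$, so $u$ is independent of $x'$.
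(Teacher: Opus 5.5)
Your proof has two genuine gaps, in Step~1 and in the limiting argument of Step~2.

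\textbf{Step~1.} This step rests on a false claim. To use the slab $\{|x_n|\le M\}$ as the exterior region you need $v=u-u_\tau\le 0$ there. You justify this by asserting $u<1-\gamma$ on the slab. But by your choice of $M$ one has $u\ge 1-\gamma$ on the hyperplane $\{x_n=M\}$, which belongs to the slab, and no enlargement of $M$ or shrinking of $\gamma$ changes this. In fact the sign of $v$ on the slab is part of what Step~1 must prove. Your fallbacks (one half-space at a time, or rerunning the proof of Theorem~\ref{T2} on $D$) still presuppose the conclusion, so the circularity you noticed is not broken.

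The missing observation is that no exterior condition on the slab is needed. Your own remark that $x_n+\tau\ge M$ on the slab gives $u_\tau\ge 1-\gamma$. So for $\tau\ge 2M$, at every point of the slab where $v>0$, both $u$ and $u_\tau$ lie in $[1-\gamma,1]$. Hence $\partial_t^\alpha v+(-\Delta)^s v-\Delta v\le 0$ holds at \emph{all} points of $\mathbb R^n\times\mathbb R$ where $v>0$.

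The paper then argues by contradiction. If $m:=\sup v>0$, put $\tilde v=v-m/2$. By \eqref{T32}, uniformly in $x'$ and $t$, $\tilde v\le 0$ on $\{x_n\ge A\}$ for some large $A$. Theorem~\ref{T2} applied on $\{x_n<A\}$, whose complement is a half-space and so satisfies \eqref{T21}, yields $v\le m/2$, a contradiction. The same device also removes the density problem in the second half of your Step~2: subtracting half the supremum makes the exterior region the slab together with $\{|x_n|\ge A\}$. So no modification of Theorem~\ref{T2} is required.

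\textbf{Step~2.} Uniform continuity plus Arzel\`a--Ascoli gives only locally uniform convergence of the translates of $u$. The hypotheses provide no uniform bounds on $D_x^2u$ or $\partial_t u$. So the limit $\bar u$ is not known to be $C^2$ in $x$ and $C^1$ in $t$, and $-\Delta u_k$, $(-\Delta)^s u_k$, $\partial_t^\alpha u_k$ need not converge pointwise. Thus ``$\bar u$ solves the same equation'' and the pointwise sign analysis at the minimum point of $\bar w_{\tau_0}$ are unjustified. The difficulty lies in the operator, not only in the $t$-shifts of $h$.

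The paper never passes to the limit in the equation. It works with $W_{\tau_0}=u-u_{\tau_0}$ (the negative of your $w_{\tau_0}$):
\begin{itemize}
\item At points $(x^k,t_k)$ of the slab with $W_{\tau_0}(x^k,t_k)=-\varepsilon_k\to 0$, it adds the bump $\varepsilon_k\xi(x-x^k,t-t_k)$.
\item It evaluates the equation for $W_{\tau_0}$ itself at the maximum point $(\bar x^k,\bar t_k)$ of the perturbed function.
\item Combined with the generalized average inequality of Lemma~\ref{L1}, this gives $\int_{B_r^c(\bar x^k)}W_{\tau_0}(y,\bar t_k)|\bar x^k-y|^{-n-2s}\,dy\to 0$ for each fixed $r$.
\item Only then does it take a limit of translates of $u$, which needs nothing beyond uniform continuity, and reads off a $\tau_0$-periodicity in $x_n$ that contradicts \eqref{T32}.
\end{itemize}
This step does use that $h(t,\cdot)$ is uniformly continuous uniformly in $t$, which the paper assumes tacitly; you were right to flag that point. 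Also, at a zero of $\bar w_{\tau_0}$ the right-hand side vanishes identically, so no Lipschitz assumption on $h$ is needed there.

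Your Step~3 and the argument for dependence on $x_n$ alone coincide with the paper's.
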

\begin{remark}
Theorem \ref{T3} can be seen as an extension of Theorem 1.3 in \cite{CWA}.
\end{remark}

Finally, we emphasize that rescaling techniques play a crucial role in the qualitative analysis of solutions. While the fractional Laplacian and the classical Laplacian scale differently, a key contribution of this work lies in adapting the sliding method to accommodate these distinct scaling behaviors within a unified framework. This approach allows us to resolve the Gibbons-type conjecture for mixed fractional equations and may be applicable to a broader class of mixed elliptic and parabolic problems.

The paper is organized as follows. In Section~2, we establish narrow region principles (Theorems~\ref{D1} and~\ref{N1}), maximum principles (Theorems~\ref{D3} and~\ref{T2}), and generalized average inequalities. In Section~3, we investigate monotonicity and one-dimensional symmetry for elliptic equations (Theorems~\ref{D2}, \ref{D4}, and~\ref{D5}). Section~4 is devoted to the parabolic case, where we prove monotonicity and symmetry results (Theorems~\ref{N2}, \ref{s1}, and~\ref{T3}).

\section{Various maximum principles and key lemmas}

In this section, we establish several narrow region principles (Theorems~\ref{D1} and~\ref{N1}), maximum principles (Theorems~\ref{D3} and~\ref{T2}), as well as some generalized average inequalities.

Before stating our main results, we introduce some notation. For $x = (x', x_n)$, where $x' = (x_1, \ldots, x_{n-1}) \in \mathbb{R}^{n-1}$, and for $\tau > 0$, we define
$$x_\tau := x + \tau e_n, \qquad \text{where } e_n = (0',1).$$

\subsection{Maximum principles and key lemmas  involving elliptic  mixed local and nonlocal operators}
\
\newline
\indent
Let
$$u_\tau(x) := u(x',x_n+\tau), \qquad w_\tau(x) := u(x)-u_\tau(x).$$
Since the classical generalized weighted average inequality for the fractional Laplacian is no longer applicable in the present setting, we first establish a generalized average inequality adapted to the mixed local and nonlocal operators.

\begin{lemma}\label{L2}
Assume that $u \in \mathcal{L}_{2s}\cap C^{2}(\mathbb{R}^n)$ attains its maximum at a point $\tilde{x}\in \mathbb{R}^n$. Then, for any $r>0$, the following inequality holds:
\begin{align}\label{L21}
\frac{C_0}{C_{n,s}}\, r^{2s}\big\{(-\Delta)^s u(\tilde{x})-\Delta u(\tilde{x})\big\}
+ C_0 r^{2s}\int_{B_r^c(\tilde{x})} \frac{u(y)}{|\tilde{x}-y|^{n+2s}}\,dy
\ge u(\tilde{x}),
\end{align}
where $B_r^c(\tilde{x})$ denotes the complement of $B_r(\tilde{x})$ in $\mathbb{R}^n$,
$$C_0 := \left(\int_{B_1^c(0)} \frac{1}{|y|^{n+2s}}\,dy\right)^{-1}$$
is a positive constant, and
$$C_0 r^{2s}\int_{B_r^c(\tilde{x})} \frac{1}{|\tilde{x}-y|^{n+2s}}\,dy = 1.$$
\end{lemma}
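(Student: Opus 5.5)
The plan is to split $(-\Delta)^s u(\tilde x)$ into the contributions from $B_r(\tilde x)$ and from $B_r^c(\tilde x)$, and to use the maximum at $\tilde x$ to control the signs of both the local piece and the Laplacian. First, $C_0$ is finite and positive: in polar coordinates, $\int_{B_1^c(0)}|y|^{-n-2s}\,dy=|S^{n-1}|/(2s)$. Scaling $y\mapsto \tilde x+r y$ then gives
$$\int_{B_r^c(\tilde x)}|\tilde x-y|^{-n-2s}\,dy=r^{-2s}C_0^{-1},$$
which is the normalization identity in the statement.

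Next, for small $\epsilon<r$, write
$$\frac{1}{C_{n,s}}(-\Delta)^s u(\tilde x)=\lim_{\epsilon\to0}\int_{B_r(\tilde x)\setminus B_\epsilon(\tilde x)}\frac{u(\tilde x)-u(y)}{|\tilde x-y|^{n+2s}}\,dy+\int_{B_r^c(\tilde x)}\frac{u(\tilde x)-u(y)}{|\tilde x-y|^{n+2s}}\,dy .$$
The paper's definition carries the constant ambiguously, which is why the factor $C_0/C_{n,s}$ appears; I will interpret $(-\Delta)^s$ with the constant $C_{n,s}$. Since $\tilde x$ is a maximum point, $u(\tilde x)-u(y)\ge 0$ for all $y$, so the inner integral is nonnegative. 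The limit exists because $u\in C^2$, and the outer integral converges because $u\in\mathcal L_{2s}$. Applying the normalization identity, the outer integral equals
$$u(\tilde x)\,C_0^{-1}r^{-2s}-\int_{B_r^c(\tilde x)}\frac{u(y)}{|\tilde x-y|^{n+2s}}\,dy .$$
Hence
$$\frac{C_0}{C_{n,s}}\,r^{2s}(-\Delta)^s u(\tilde x)\ \ge\ u(\tilde x)-C_0r^{2s}\int_{B_r^c(\tilde x)}\frac{u(y)}{|\tilde x-y|^{n+2s}}\,dy .$$

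Finally, $\tilde x$ is an interior maximum of a $C^2$ function, so the Hessian is negative semidefinite and $-\Delta u(\tilde x)\ge0$. Therefore
$$\frac{C_0}{C_{n,s}}\,r^{2s}\bigl(-\Delta u(\tilde x)\bigr)\ge0,$$
and adding this term preserves the inequality. Rearranging gives \eqref{L21}.

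The only delicate point is justifying that the principal value can be split. This is handled by taking $\epsilon<r$ from the start, so that the limit only affects the integral over the ball, where the integrand is nonnegative. By monotone convergence the limit therefore exists in $[0,\infty]$, and it is finite since $(-\Delta)^s u(\tilde x)$ is well defined. Nothing else is an obstacle.
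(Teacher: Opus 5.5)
Your proof is correct and follows the same route as the paper. You discard $-\Delta u(\tilde x)\ge 0$ and the nonnegative contribution of $B_r(\tilde x)$ to the principal value, evaluate the exterior integral by the scaling identity $\int_{B_r^c(\tilde x)}|\tilde x-y|^{-n-2s}\,dy=C_0^{-1}r^{-2s}$, and multiply through by $C_0r^{2s}/C_{n,s}$. The paper's one-line inequality relies implicitly on taking $\epsilon<r$ before the limit and on the integrability from $u\in\mathcal{L}_{2s}$, and you simply make both explicit.
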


\begin{proof}
Suppose that $\tilde{x}$ is a maximum point of  $u(x)$, applying the definitions of the fractional Laplacian and Laplacian, we have
\begin{align*}
(-\Delta)^s u(\tilde{x})-\Delta u(\tilde{x})
\geq& (-\Delta)^s u(\tilde{x})\\
=& C_{n,s}P.V. \int_{\mathbb{R}^n} \frac{u(\tilde{x}) - u(y)}{|\tilde{x} - y|^{n+2s}}dy \\
\geq& C_{n,s} \int_{B_r^c(\tilde{x})} \frac{u(\tilde{x}) - u(y)}{|\tilde{x} - y|^{n+2s}}dy \\
=& C_{n,s} u(\tilde{x}) \int_{B_r^c(\tilde{x})} \frac{1}{|\tilde{x} - y|^{n+2s}}dy - C_{n,s} \int_{B_r^c(\tilde{x})} \frac{u(y)}{|\tilde{x} - y|^{n+2s}}dy \\
=& C_{n,s} \frac{u(\tilde{x})}{r^{2s}} \int_{B_1^c(0)} \frac{1}{|y|^{n+2s}}dy - C_{n,s} \int_{B_r^c(\tilde{x})} \frac{u(y)}{|\tilde{x} - y|^{n+2s}}dy \\
=& \frac{C_{n,s}}{C_0} \frac{u(\tilde{x})}{r^{2s}} - C_{n,s} \int_{B_r^c(\tilde{x})} \frac{u(y)}{|\tilde{x} - y|^{n+2s}}dy,
\end{align*}
where $$C_0 = \frac{1}{\int_{B_1^c(0)} \frac{1}{|y|^{n+2s}}dy}$$ is a positive constant. Thus,
\begin{align*}
\frac{C_0}{C_{n,s}} r^{2s} \{(-\Delta)^s u(\tilde{x})-\Delta u(\tilde{x})\} + C_0 r^{2s}\int_{B_r^c(\tilde{x})} \frac{u(y)}{{|\tilde{x} - y|^{n+2s}}}dy \geq u(\tilde{x}),
\end{align*}
which yields  \eqref{L21} and completes the proof of Lemma \ref{L2}.
\end{proof}

\begin{proof}[Proof of Theorem \ref{D1}]

 If \eqref{D13} does not hold, then by the lower semi-continuity of $w_\tau$ in $\Omega$,  there exists a point $x_0$ such that
\begin{align*}
w_\tau(x_0) := \max_{\overline{\Omega}} w_\tau (x)> 0,
\end{align*}
that is,
\begin{align}\label{D101}
-w_\tau(x_0) = \min_{\overline{\Omega}} (-w_\tau(x)) < 0.
\end{align}
Combining  \eqref{D11}, \eqref{D12}, \eqref{D101} and  $\lap w_{\tau}(x_0)\leq 0$, we have
\begin{align*}
&-\{(-\Delta)^s w_\tau(x_0)-\Delta w_\tau(x_0)+ c(x_0)w_\tau(x_0)\} \\
\leq&-(-\Delta)^s w_\tau(x_0)-c(x_0)w_\tau(x_0)\\
=&C_{n,s}P.V. \int_{\Omega} \frac{-w_\tau(x_0) +w_\tau(y)}{|x_0 - y|^{n+2s}} dy\\
&+C_{n,s}P.V. \int_{\mathbb{R}^n \setminus \Omega} \frac{-w_\tau(x_0) +w_\tau(y)}{|x_0 - y|^{n+2s}} dy-c(x_0)w_\tau(x_0)\\
\leq &-w_\tau(x_0)C_{n,s}P.V. \int_{\mathbb{R}^n \setminus \Omega} \frac{ 1}{|x_0 - y|^{n+2s}} dy-\inf_{\Omega} c(x)w_\tau(x_0)\\
\leq &-w_\tau(x_0)\{\frac{C}{l_n^{2s}(\Omega)}+\inf_{\Omega} c(x)\}\\
<&0,
\end{align*}
that is,
$$(-\Delta)^s w_\tau(x_0)-\Delta w_\tau(x_0)+ c(x_0)w_\tau(x_0)>0,$$
which contradicts the first inequality in \eqref{D11}. Hence, we infer that \eqref{D13} must hold, that is,
$$w_\tau(x) \le 0, \qquad x \in \Omega.$$

Based on the above result, if $w_\tau(x)=0$ at some point $\bar{x}\in\Omega$, then $\bar{x}$ is a maximum point of $w_\tau$ in $\Omega$. If $w_\tau \not\equiv 0$ in $\mathbb{R}^n$, then
$$(-\Delta)^s w_\tau(\bar{x})-\Delta w_\tau(\bar{x})+c(\bar{x})w_\tau(\bar{x})
\ge (-\Delta)^s w_\tau(\bar{x})
= C_{n,s}\,\mathrm{P.V.}\int_{\mathbb{R}^n}\frac{-w_\tau(y)}{|\bar{x}-y|^{n+2s}}\,dy
>0,$$
which contradicts the first inequality in \eqref{D11}. Therefore, \eqref{D14} holds.

This completes the proof of Theorem~\ref{D1}.
\end{proof}

\begin{proof}[Proof of Theorem \ref{D3}]
We prove \eqref{D33} by contradiction. Suppose that \eqref{D33} does not hold. Since $u(x)$ is bounded from above in $\Omega$, there exists a positive constant $M$ such that
\begin{align}\label{D301}
\sup_{x \in \Omega} u(x) := M > 0.
\end{align}
Then there exists a sequence $\{x^k\} \subset \Omega$ such that
$$u(x^k) \to M ~ \mbox{ as }~ k \to \infty.$$

Let
$$
\psi(x) :=
\begin{cases}
b \, e^{\frac{1}{|x|^2-1}}, ~& |x|<1,\\
0, ~& |x|\ge 1,
\end{cases}
$$
where $b>0$ is a constant. Note that
$$\psi(0) = \max_{\mathbb{R}^n} \psi(x) = 1 ~\mbox{ with }~ b = e.$$
Clearly, $\psi(x)$ is radially decreasing with respect to $|x|$ and has support in $B_1(0)$.

We then rescale $\psi(x)$ by defining
$$\psi_k(x) := \psi\left( \frac{x - x^k}{r_k} \right), ~ \mbox{ where }~ r_k = \frac{1}{2} \operatorname{dist}(x^k, \partial \Omega) > 0.$$

Now we assert that $r_k$ is bounded away from $0$, i.e., there exists a positive constant $\bar c$ such that
\begin{align}\label{D311}
    r_k=\frac{1}{2}dist(x^k, \partial \Omega)\geq \bar c >0,~k\in \mathbb{N}.
\end{align}
Otherwise,  $\{x^k\}$ will converge to a point on $\partial \Omega$. Then  by $u\leq 0,~x\in \Omega^c$, we have $$u(x^k) \to 0,~\mbox{ as }~k\to +\infty,$$
which contradicts with the fact that $$u(x^k) \to M>0, ~\mbox{ as }~ k \to \infty.$$ Thus, \eqref{D311} is valid.

Next,  we choose a nonnegative sequence $\{\varepsilon_k\} \searrow 0$ and introduce the  auxiliary function
\begin{align}\label{D303}
U_k(x) = u(x) + \varepsilon_k \psi_{r_k}(x),
\end{align}
where $r_k=\frac{1}{2}dist(x^k, \partial \Omega)\geq \bar c >0,~k\in \mathbb{N}.$

 Noting that
$$U_k(x) \leq M,~ x \in \mathbb{R}^n \setminus B_{r_k}(x^k)$$
and
$$U_k(x^k)=u(x^k)+\varepsilon_k \psi_k(x^k)>M\geq U_k(x).$$
It is then clear that $U_k(x)$ attains its maximum value in $B_{r_k}(x^k)$. Without loss of generality, let $\tilde{x}^k \in B_{r_k}(x^k)$ be a point such that
\begin{align}\label{D304}
M+ \varepsilon_k \geq U_k(\tilde{x}^k) := \underset{\mathbb{R}^n}{\max}~U_k(x) > M > 0.
\end{align}
A straightforward calculation shows that
\begin{align}\label{D305}
(-\Delta)^s U_k(\tilde{x}^k)-\Delta U_k(\tilde{x}^k) \geq C_{n,s}P.V.\int_{\mathbb{R}^n} \frac{U_k(\tilde{x}^k) - U_k(y)}{|\tilde{x}^k - y|^{n+2s}}dy  \geq 0.
\end{align}
Combining $c(x)\geq 0$, \eqref{D32}, \eqref{D311}, \eqref{D303} and \eqref{D305}, we obtain
\begin{align}\label{D306}
0\leq (-\Delta)^s U_k(\tilde{x}^k)\leq& (-\Delta)^s U_k(\tilde{x}^k)-\Delta U_k(\tilde{x}^k)\nonumber\\
=&(-\Delta)^s u(\tilde{x}^k) + \varepsilon_k (-\Delta)^s \psi_{r_k}(\tilde{x}^k)-\Delta u(\tilde{x}^k)-\varepsilon_k \Delta \psi_{r_k}(\tilde{x}^k) \nonumber\\
\leq &-c(\tilde{x}^k)u(\tilde{x}^k)+\varepsilon_k\frac{C_1}{r_k^{2s}} +\varepsilon_k\frac{C_2}{r_k^{2}}\nonumber\\
\leq &\varepsilon_k \frac{C}{r_k^{2s}},
\end{align}
where $C_1$, $C_2$ and $C$ are positive constants and we have used the estimates
$$|(-\lap)^s \psi_{r_k}(\tilde{x}^k)|\leq \frac{C_1}{r_k^{2s}}~\mbox{ and }~|-\lap  \psi_{r_k}(\tilde{x}^k)|\leq \frac{C_2}{r_k^2}. $$

Applying \eqref{D304} and Lemma~\ref{L2} to $U_k$ at $\tilde{x}^k$, we infer
\begin{align}\label{D307}
\frac{C_0}{C_{n,s}} r_k^{2s} \{(-\Delta)^s U_k(\tilde{x}^k)-\Delta U_k(\tilde{x}^k)\} +C_0 r_k^{2s} \int_{B_{r_k}^c(\tilde{x}^k)} \frac{U_k(y)}{|\tilde{x}^k - y|^{n+2s}} dy \geq U_k(\tilde{x}^k) \geq M,
\end{align}
where
$$C_0 = \frac{1}{\int_{B_1^c(0)} \frac{1}{|y|^{n+2s}}dy}.$$
On the other hand, it follows from \eqref{D306} that
\begin{align}\label{D308}
\frac{C_0}{C_{n,s}} r_k^{2s} \{(-\Delta)^s U_k(\tilde{x}^k)-\Delta U_k(\tilde{x}^k)\}\leq\frac{C_0}{C_{n,s}}r_k^{2s} \varepsilon_k\frac{C}{r_k^{2s}}\leq \varepsilon_k C_3,
\end{align}
where $C_3$ is a positive constant.

Since $R > R / \sqrt[n]{2}$, applying \eqref{D31} and the result of \cite{CWA}, we obtain
$$\mathop{\underline{\lim}}\limits_{R \to +\infty} \frac{\bigl| \{ B_R(\tilde{x}^k) \setminus B_{R/\sqrt[n]{2}}(\tilde{x}^k) \} \cap \Omega^c \bigr|}{|B_R(\tilde{x}^k)|} > 0.$$
This implies that there exist a positive constant $\tilde{C}$ and a sufficiently large $R_k$ such that
\begin{align}\label{D3081}
\frac{\bigl| \{ B_R(\tilde{x}^k) \setminus B_{R/\sqrt[n]{2}}(\tilde{x}^k) \} \cap \Omega^c \bigr|}{|B_R(\tilde{x}^k)|} \ge \tilde{C} > 0, \quad \text{for }~ R \ge R_k.
\end{align}

Let $r_k = R_k / \sqrt[n]{2}$. Using \eqref{D304}, \eqref{D3081}, and the fact that
$$U_k(y) = u(y) \le 0, \quad y \in B_{r_k}^c(\tilde{x}^k) \cap \Omega^c,$$
a straightforward calculation then yields
\begin{align}\label{D309}
 & C_0 r_k^{2s} \int_{B_{r_k}^c(\tilde{x}^k)} \frac{U_k(y)}{|\tilde{x}^k - y|^{n+2s}} dy \nonumber\\
=& C_0 (R_k / \sqrt[n]{2})^{2s} \int_{B_{R_k/\sqrt[n]{2}}^c(\tilde{x}^k)} \frac{U_k(y)}{|\tilde{x}^k - y|^{n+2s}} dy \nonumber\\
=& C_0 (R_k / \sqrt[n]{2})^{2s} \{ \int_{B_{R_k/\sqrt[n]{2}}^c(\tilde{x}^k) \cap \Omega} \frac{U_k(y)}{|\tilde{x}^k - y|^{n+2s}} dy +\int_{B_{R_k/\sqrt[n]{2}}^c(\tilde{x}^k) \cap \Omega^c} \frac{U_k(y)}{|\tilde{x}^k - y|^{n+2s}} dy\nonumber\\
&+ \int_{B_{R_k/\sqrt[n]{2}}^c(\tilde{x}^k) \cap \Omega^c} \frac{M + \varepsilon_k}{|\tilde{x}^k - y|^{n+2s}} dy- \int_{B_{R_k/\sqrt[n]{2}}^c(\tilde{x}^k) \cap \Omega^c} \frac{M + \varepsilon_k}{|\tilde{x}^k - y|^{n+2s}} dy  \} \nonumber\\
\leq& C_0 (R_k / \sqrt[n]{2})^{2s} \{ \int_{B_{R_k/\sqrt[n]{2}}^c(\tilde{x}^k) \cap \Omega} \frac{M + \varepsilon_k}{|\tilde{x}^k - y|^{n+2s}} dy+ \int_{B_{R_k/\sqrt[n]{2}}^c(\tilde{x}^k) \cap \Omega^c} \frac{M+ \varepsilon_k}{|\tilde{x}^k - y|^{n+2s}} dy\nonumber\\
&- \int_{B_{R_k/\sqrt[n]{2}}^c(\tilde{x}^k) \cap \Omega^c} \frac{M + \varepsilon_k}{|\tilde{x}^k - y|^{n+2s}} dy  \} \nonumber\\
\leq &C_0 (R_k / \sqrt[n]{2})^{2s} \int_{B_{R_k/\sqrt[n]{2}}^c(\tilde{x}^k)} \frac{M + \varepsilon_k}{|\tilde{x}^k - y|^{n+2s}} dy - C_0 (R_k / \sqrt[n]{2})^{2s} \int_{B_{R_k/\sqrt[n]{2}}^c(\tilde{x}^k) \cap \Omega^c} \frac{M + \varepsilon_k}{|\tilde{x}^k - y|^{n+2s}} dy \nonumber\\
=& M + \varepsilon_k - C_0 (R_k / \sqrt[n]{2})^{2s} \int_{B_{R_k/\sqrt[n]{2}}^c(\tilde{x}^k) \cap \Omega^c} \frac{M + \varepsilon_k}{|\tilde{x}^k - y|^{n+2s}} dy \nonumber\\
\leq& M + \varepsilon_k - C_0 (R_k / \sqrt[n]{2})^{2s} \int_{\{B_{R_k}(\tilde{x}^k) \setminus B_{R_k/\sqrt[n]{2}}(\tilde{x}^k)\} \cap \Omega^c} \frac{M + \varepsilon_k}{|\tilde{x}^k - y|^{n+2s}} dy \nonumber\\
\leq& M + \varepsilon_k - (M + \varepsilon_k)C_0 (R_k / \sqrt[n]{2})^{2s} R_k^{-(n+2s)} |\{B_{R_k}(\tilde{x}^k) \setminus B_{R_k/\sqrt[n]{2}}(\tilde{x}^k)\} \cap \Omega^c|\nonumber\\
\leq &M + \varepsilon_k - (M + \varepsilon_k) C_0 (R_k / \sqrt[n]{2})^{2s} R_k^{-(n+2s)} \tilde{C} |B_{R_k}(\tilde{x}^k)| \nonumber\\
=& (1 - C)(M + \varepsilon_k).
\end{align}
Combining $r_k=R_k / \sqrt[n]{2}$, \eqref{D306}, \eqref{D307}, \eqref{D308} and \eqref{D309}, we have
\begin{align*}
M\leq&\frac{C_0}{C_{n,s}} r_k^{2s} \{(-\Delta)^s U_k(\tilde{x}^k)-\Delta U_k(\tilde{x}^k)\} +C_0 r_k^{2s} \int_{B_{r_k}^c(\tilde{x}^k)} \frac{U_k(y)}{|\tilde{x}^k - y|^{n+2s}} dy\\
\leq&C_3 \varepsilon_k+(1 - C)(M + \varepsilon_k),
\end{align*}
which implies
$$M\leq C_4 \varepsilon_k,$$
where $C_4$ is a positive constant. This is a contradiction for sufficiently large $k$. Hence, \eqref{D301} cannot hold.

This proves \eqref{D33} and completes the proof of Theorem~\ref{D3}.
\end{proof}

\subsection{Maximum principles and key lemmas  involving parabolic  mixed local and nonlocal  operators}
\
\newline
\indent
Let
$$u_{\tau}(x,t)= u(x_\tau, t)~\mbox{ and }~w_\tau(x, t) = u(x, t) - u_\tau(x, t).$$

We first present a generalized average inequality.

\begin{lemma}\label{L1}
For each fixed $t \in \mathbb{R}$, assume that
 $u(x, t) \in (C^2(\mathbb R^n)\cap {\tilde {\mathcal L}}_{2s})\times (C^1(\mathbb R))$
attains its maximum at a point $\tilde{x} \in \mathbb{R}^n$. Then, for any $r>0$, the following inequality holds:
\begin{align}\label{L11}
\frac{C_0}{C_{n,s}}\, r^{2s} \bigl\{(-\Delta)^s u(\tilde{x},t) - \Delta u(\tilde{x},t)\bigr\}
+ C_0\, r^{2s} \int_{B_r^c(\tilde{x})} \frac{u(y,t)}{|\tilde{x}-y|^{n+2s}} \, dy
\ge u(\tilde{x},t),
\end{align}
where $B_r^c(\tilde{x})$ denotes the complement of $B_r(\tilde{x})$ in $\mathbb{R}^n$, and
$$C_0 := \frac{1}{\int_{B_1^c(0)} \frac{1}{|y|^{n+2s}}\, dy} > 0,$$
satisfying
$$C_0\, r^{2s} \int_{B_r^c(\tilde{x})} \frac{1}{|\tilde{x}-y|^{n+2s}} \, dy = 1.$$
\end{lemma}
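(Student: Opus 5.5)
The plan is to reduce Lemma~\ref{L1} to Lemma~\ref{L2}: the time variable enters only as a parameter, and the operator $(-\Delta)^s-\Delta$ acts in $x$ alone. Fix $t\in\mathbb R$ and set $v(x):=u(x,t)$. By hypothesis $v\in C^2(\mathbb R^n)\cap\mathcal L_{2s}$, since membership of $u(\cdot,t)$ in $\tilde{\mathcal L}_{2s}$ is exactly $v\in\mathcal L_{2s}$. Moreover $v$ attains its maximum over $\mathbb R^n$ at $\tilde x$. Lemma~\ref{L2} applied to $v$ then gives \eqref{L11} directly.

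For a self-contained argument, the computation of Lemma~\ref{L2} can be repeated with $t$ frozen, in four steps.

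\begin{enumerate}[(i)]
\item Since $\tilde x$ is an interior maximum of the $C^2$ function $v$, the Hessian of $v$ at $\tilde x$ is negative semidefinite. Hence $-\Delta u(\tilde x,t)\ge 0$ and
\[
(-\Delta)^s u(\tilde x,t)-\Delta u(\tilde x,t)\ge(-\Delta)^s u(\tilde x,t).
\]
\item The integrand $u(\tilde x,t)-u(y,t)$ is nonnegative everywhere. So the principal value integral is bounded below by the integral over $B_r^c(\tilde x)$. That integral converges absolutely because $u(\cdot,t)\in\tilde{\mathcal L}_{2s}$.
\item Split that integral into $u(\tilde x,t)\int_{B_r^c(\tilde x)}|\tilde x-y|^{-n-2s}\,dy$ minus $\int_{B_r^c(\tilde x)} u(y,t)|\tilde x-y|^{-n-2s}\,dy$. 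The change of variables $y=\tilde x+rz$ evaluates the first factor as $r^{-2s}/C_0$, which is also the stated normalization.
\item Multiply through by $C_0r^{2s}/C_{n,s}>0$ and rearrange.
\end{enumerate}

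No step is a genuine obstacle. The only points to check are that the maximum is a global maximum in $x$ for the fixed $t$, which is what makes the P.V.\ integrand nonnegative, and that the tail integral is finite. Both follow from the hypotheses as stated. The time regularity $C^1(\mathbb R)$ plays no role in this lemma.
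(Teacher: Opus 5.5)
Your proposal is correct and is essentially the paper's own argument: the paper also freezes $t$ and repeats the computation of Lemma~\ref{L2}, using $-\Delta u(\tilde x,t)\ge 0$ at the maximum, discarding the nonnegative contribution of $B_r(\tilde x)$ to the principal value, and rescaling $\int_{B_r^c(\tilde x)}|\tilde x-y|^{-n-2s}\,dy=r^{-2s}/C_0$ before rearranging. Your extra remarks on the absolute convergence of the tail integral (from $u(\cdot,t)\in\tilde{\mathcal L}_{2s}$) and on the irrelevance of the time regularity are accurate but do not change the route.
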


\begin{remark}
Lemma~\ref{L1} can be regarded as an extension of Theorem~1.1 in \cite{CWA}.
\end{remark}
\begin{proof}
The proof follows similarly to that of Lemma~\ref{L2} and Theorem~1.1 in \cite{CWA}.

Suppose that, for a given $t \in \mathbb{R}$, $u(x,t)$ attains its maximum at $\tilde{x}$. By the definitions of the fractional Laplacian and the classical Laplacian, we have
\begin{align*}
(-\Delta)^s u(\tilde{x}, t)-\Delta u(\tilde{x},t)
\geq& (-\Delta)^s u(\tilde{x}, t)\\
=& C_{n,s}P.V. \int_{\mathbb{R}^n} \frac{u(\tilde{x}, t) - u(y, t)}{|\tilde{x} - y|^{n+2s}}dy \\
\geq& C_{n,s} \int_{B_r^c(\tilde{x})} \frac{u(\tilde{x}, t) - u(y, t)}{|\tilde{x} - y|^{n+2s}}dy \\
=& C_{n,s} u(\tilde{x}, t) \int_{B_r^c(\tilde{x})} \frac{1}{|\tilde{x} - y|^{n+2s}}dy - C_{n,s} \int_{B_r^c(\tilde{x})} \frac{u(y, t)}{|\tilde{x} - y|^{n+2s}}dy \\
=& C_{n,s} \frac{u(\tilde{x}, t)}{r^{2s}} \int_{B_1^c(0)} \frac{1}{|y|^{n+2s}}dy - C_{n,s} \int_{B_r^c(\tilde{x})} \frac{u(y, t)}{|\tilde{x} - y|^{n+2s}}dy \\
=& \frac{C_{n,s}}{C_0} \frac{u(\tilde{x}, t)}{r^{2s}} - C_{n,s} \int_{B_r^c(\tilde{x})} \frac{u(y, t)}{|\tilde{x} - y|^{n+2s}}dy,
\end{align*}
where $$C_0 = \frac{1}{\int_{B_1^c(0)} \frac{1}{|y|^{n+2s}}dy}$$ is a positive constant. Thus,
\begin{align*}
\frac{C_0}{C_{n,s}} r^{2s} \{(-\Delta)^s u(\tilde{x}, t)-\Delta u(\tilde{x},t)\} + C_0 r^{2s}\int_{B_r^c(\tilde{x})} \frac{u(y, t)}{{|\tilde{x} - y|^{n+2s}}}dy \geq u(\tilde{x}, t),
\end{align*}
which proves  \eqref{L11}.

This completes the proof of Lemma \ref{L1}.
\end{proof}

\begin{proof}[Proof of Theorem \ref{N1}]

Recall that $\Omega$ is bounded and $w_\tau(x,t) = u(x,t) - u_\tau(x,t)$. Suppose that \eqref{N13} does not hold. Since $\Omega$ is bounded, $U_\tau$ is bounded from below in $\Omega \times \mathbb{R}$, and for any fixed $t \in \mathbb{R}$, $U_\tau(\cdot,t)$ is lower semicontinuous on $\overline{\Omega}$. Then there exist a point $x(t) \in \Omega$ and a constant $\bar m > 0$ such that
\begin{align}\label{N101}
\inf_{(x,t) \in \Omega \times \mathbb{R}} U_\tau(x,t)
= \inf_{t \in \mathbb{R}} U_\tau(x(t),t) := -\bar m < 0.
\end{align}

Consequently, there exists a sequence $\{t_k\} \subset \mathbb{R}$ and a sequence $\{\bar m_k\} \nearrow \bar m$ such that
$$U_\tau(x(t_k), t_k) = -\bar m_k \searrow -\bar m, ~\mbox{ as }~ k \to \infty.$$

Since the infimum of $U_\tau$ with respect to $t$ may not be attained, we perturb $U_\tau$ in time so that the infimum $-\bar m$ is achieved by the perturbed function. Define the auxiliary function
$$U(x,t) := U_\tau(x,t) - \varepsilon_k \gamma_k(t),$$
where $\varepsilon_k := \bar m - \bar m_k$ and $\gamma_k(t) := \gamma(t - t_k)$ with $\gamma \in C_0^\infty(-1,1)$, $0 \le \gamma \le 1$, satisfying
$$
\gamma(t) =
\begin{cases}
1, ~& |t| \le \frac{1}{2},\\
0, ~& |t| \ge 1.
\end{cases}
$$

Clearly, $\mathrm{supp}\, \gamma_k \subset (t_k-1, t_k+1)$ and $\gamma_k(t_k) = 1$.
Using the exterior condition in \eqref{N11} together with \eqref{N101}, we then derive
$$\left\{\begin{array}{ll}
U(x(t_k),t_k)=-\bar m,\\
U(x,t)=U_\tau(x,t)\geq -\bar m,~(x,t)\in \Omega \times [\mathbb R \setminus (-1+t_k,1+t_k)],\\
U(x,t)\geq -\varepsilon_k \gamma_k(t)>-\bar m,~(x,t)\in \Omega^c\times \mathbb R.
\end{array}
\right.$$

Due to the lower semicontinuity of $U_\tau$ in $\overline{\Omega} \times (t_k-1, t_k+1)$, the function $U$ attains its minimum value, which is at most $-\bar m$, in $\Omega \times (t_k-1, t_k+1)$. That is, there exists a sequence
$$
\{(\tilde{x}^k, \tilde{t}_k)\} \subset \Omega \times (t_k-1, t_k+1)
$$
such that
\begin{equation}\label{2233}
-\bar m - \varepsilon_k \le U(\tilde{x}^k, \tilde{t}_k) := \inf_{\Omega \times \mathbb{R}} U(x,t) \le -\bar m.
\end{equation}

Consequently, we have
$$
-\bar m \le U_\tau(\tilde{x}^k, \tilde{t}_k) \le -\bar m_k < 0.
$$

Using the definition of $U$ and \eqref{2233}, we obtain
\begin{equation*}
\begin{cases}
\partial_t U(\tilde{x}^k, \tilde{t}_k) \le 0,\\
\partial_t^\alpha U(\tilde{x}^k, \tilde{t}_k)
= C_\alpha \int_{-\infty}^{\tilde{t}_k} \frac{U(\tilde{x}^k, \tilde{t}_k) - U(\tilde{x}^k, \tau)}{(\tilde{t}_k - \tau)^{1+\alpha}}\, d\tau \le 0, \quad \text{for }~ 0 < \alpha < 1,
\end{cases}
\end{equation*}
and
$$
-\Delta U(\tilde{x}^k, \tilde{t}_k) \le 0.
$$
Moreover,
$$\aligned (-\lap)^s U(\tilde x^k,\tilde t_k)&=C_{n,s}P.V.\int_{\mathbb R^n} \frac{U(\tilde x^k,\tilde t_k)-U( y,\tilde t_k)}{|\tilde x^k-y|^{n+2s}}dy\\
&=C_{n,s}P.V.\int_{\Omega} \frac{U(\tilde x^k,\tilde t_k)-U( y,\tilde t_k)}{|\tilde x^k-y|^{n+2s}}dy+C_{n,s}P.V.\int_{\Omega^c} \frac{U(\tilde x^k,\tilde t_k)-U( y,\tilde t_k)}{|\tilde x^k-y|^{n+2s}}dy\\
&\leq C_{n,s} \int_{\Omega^c} \frac{U(\tilde x^k,\tilde t_k) }{|\tilde x^k-y|^{n+2s}}dy\\
&=C_{n,s} \int_{\Omega^c} \frac{U_\tau(\tilde x^k,\tilde t_k) -\varepsilon_k \gamma_k(\tilde t)}{|\tilde x^k-y|^{n+2s}}dy\\
&\leq U_\tau(\tilde x^k,\tilde t_k) \frac{C}{l^{2s}_n(\Omega)}.
\endaligned $$
Hence, we have
\begin{equation}\label{1133}
  {\partial^\alpha_ t}U(\tilde{x}^k, \tilde{t}_k) +(-\lap)^s U(\tilde x^k,\tilde t_k)-   \lap U(\tilde x^k,\tilde t_k)\leq U_\tau(\tilde x^k,\tilde t_k) \frac{C}{l^{2s}_n(\Omega)} .
\end{equation}

On the other hand, by   \eqref{N11}, we obtain
\begin{equation}\label{11}\aligned
 &{\partial^\alpha_ t}U(\tilde{x}^k, \tilde{t}_k) +(-\lap)^s U(\tilde x^k,\tilde t_k)-   \lap U(\tilde x^k,\tilde t_k)\\
 =&{\partial^\alpha_ t}[U_\tau(\tilde{x}^k, \tilde{t}_k) -\varepsilon_k \gamma_k(\tilde t)]+(-\lap)^s U_\tau(\tilde x^k,\tilde t_k)-   \lap U_\tau(\tilde x^k,\tilde t_k)\\
 =& {\partial^\alpha_ t}U_\tau(\tilde{x}^k, \tilde{t}_k) +(-\lap)^s U_\tau(\tilde x^k,\tilde t_k)-   \lap U_\tau(\tilde x^k,\tilde t_k)-\varepsilon_k {\partial^\alpha_ t}\gamma_k(\tilde t_k)\\
 \geq & -C(\tilde x^k,\tilde t_k)U_\tau(\tilde x^k,\tilde t_k)-\varepsilon_k {\partial^\alpha_ t}\gamma_k(\tilde t_k)\\
 \geq & -C(\tilde x^k,\tilde t_k)U_\tau(\tilde x^k,\tilde t_k)-C_1 \varepsilon_k,
\endaligned \end{equation}
where we have used the fact that
$$| {\partial^\alpha_ t}\gamma_k(\tilde t_k)|\leq C_1$$ with  $C_1$ is a positive constant.
Combining  \eqref{1133} and \eqref{11}, we deduce
$$-C(\tilde x^k,\tilde t_k)U_\tau(\tilde x^k,\tilde t_k)-C_1 \varepsilon_k \leq U_\tau(\tilde x^k,\tilde t_k) \frac{C}{l^{2s}_n(\Omega)},$$
which implies
\begin{align*}
0&\leq U_\tau(\tilde x^k,\tilde t_k)\{\frac{C}{l^{2s}_n(\Omega)}+C(\tilde x^k,\tilde t_k) \} + C_1\varepsilon_k \\
&\leq U_\tau(\tilde x^k,\tilde t_k)\{\frac{C}{l^{2s}_n(\Omega)}+\underset{\Omega \times \mathbb R}{\inf}C(x,t) \} + C_1\varepsilon_k.
\end{align*}
By condition \eqref{1122}, the above inequality leads to a contradiction as $k \to \infty$.

Therefore, \eqref{N13} holds. This completes the proof.
\end{proof}

\begin{proof}[Proof of Theorem \ref{T2}]
Suppose that \eqref{T23} is false. Since $u(x,t)$ is bounded from above in $D \times \mathbb{R}$, there exists a positive constant $M$ such that
\begin{align}\label{T201}
\sup_{D \times \mathbb{R}} u(x,t) := M > 0.
\end{align}

Because the set $D \times \mathbb{R}$ is unbounded, the supremum of $u(x,t)$ may not be attained. Nevertheless, by \eqref{T201}, there exists a sequence $\{(x^k, t_k)\} \subset D \times \mathbb{R}$ such that
\begin{align}\label{1T202}
u(x^k, t_k) \to M \quad \text{as }~ k \to \infty.
\end{align}
More precisely, there exists a nonnegative sequence $\{\varepsilon_k\} \searrow 0$ satisfying
\begin{align}\label{T202}
u(x^k, t_k) = M - \varepsilon_k > 0.
\end{align}

Next, we claim that $\{x^k\}$ is uniformly bounded away from $D^c$; that is, there exists a positive constant $c$ such that
$$
\mathrm{dist}(x^k, D^c) \ge c > 0.
$$
Without loss of generality, we may assume $c \ge 2$.

Indeed, if this were not the case, $\{x^k\}$ would converge to $D^c$. Since $u(x,t) \le 0$ in $D^c \times \mathbb{R}$, we would have
$$
u(x^k, t_k) \to 0 \quad \text{as }~ k \to \infty,
$$
contradicting \eqref{1T202}. Therefore, we may assume that
\begin{align}\label{001}
\mathrm{dist}(x^k, D^c) \ge c > 0.
\end{align}

We now introduce the following auxiliary function:
\begin{align}\label{T203}
V_k(x, t) := u(x, t) + \varepsilon_k \zeta_k(x, t),
\end{align}
where
$$
\zeta_k(x,t) = \zeta\left( \frac{x - x^k}{r_k}, \frac{t - t_k}{r_k^{\frac{2s}{\alpha}}} \right),
$$
with any fixed
$$
r_k = \frac{1}{2} \mathrm{dist}(x^k, D^c) \ge \frac{1}{2} c = \bar c > 0,
$$
and $\zeta(x,t) \in C_0^\infty(\mathbb{R}^n \times \mathbb{R})$ satisfying
\begin{equation*}
\begin{cases}
0 \le \zeta(x,t) \le 1, ~& \text{in }~ \mathbb{R}^n \times \mathbb{R},\\
\zeta(x,t) = 1, ~& \text{in }~ B_{\frac{1}{2}}(0) \times [-\frac{1}{2}, \frac{1}{2}],\\
\zeta(x,t) = 0, ~& \text{in }~ (\mathbb{R}^n \times \mathbb{R}) \setminus (B_{\frac{1}{2}}(0) \times [-1,1]).
\end{cases}
\end{equation*}

We define the parabolic cylinder centered at $(x^k, t_k)$ by
$$
E_{r_k}(x^k, t_k) := B_{r_k}(x^k) \times [t_k - r_k^{\frac{2s}{\alpha}},\, t_k + r_k^{\frac{2s}{\alpha}}] \subset D \times \mathbb{R}.
$$
For the mixed local and nonlocal parabolic equation with $\alpha = 1$, the parameter $\alpha$ in the definitions of $\zeta_k(x,t)$ and $E_{r_k}(x^k,t_k)$ can also be taken as 1.

By \eqref{T202} and \eqref{T203}, we have
$$
V_k(x^k, t_k) = M - \varepsilon_k + \varepsilon_k = M > 0,
$$
and
$$
V_k(x,t) \le M, \quad (x,t) \in (\mathbb{R}^n \times \mathbb{R}) \setminus E_{r_k}(x^k, t_k).
$$

This implies that the maximum of $V_k(x,t)$ over $E_{r_k}(x^k, t_k)$ is at least as large as its values over the complement, and thus the global maximum of $V_k(x,t)$ over $\mathbb{R}^n \times \mathbb{R}$ is attained within $E_{r_k}(x^k, t_k)$. Consequently, there exists a point
$$
(\tilde{x}^k, \tilde{t}_k) \in \overline{E_{r_k}(x^k, t_k)} \subset D \times \mathbb{R}
$$
such that
\begin{align}\label{T204}
M + \varepsilon_k \ge V_k(\tilde{x}^k, \tilde{t}_k) = \sup_{\mathbb{R}^n \times \mathbb{R}} V_k(x,t) \ge M > 0.
\end{align}
It follows that
\begin{equation}\label{T205}
\left\{
\begin{array}{ll}
{\partial_ t}V_k(\tilde{x}^k, \tilde{t}_k)\geq 0,\\
{\partial^\alpha_ t}V_k(\tilde{x}^k, \tilde{t}_k)=C_\alpha \int_{-\infty}^{\tilde{t}_k} \frac{V_k(\tilde{x}^k, \tilde{t}_k)-V_k(\tilde{x}^k, \tau)}{(\tilde{t}_k-\tau)^{1+\alpha}}d\tau \geq 0,~\mbox{ if }~0<\alpha<1.
 \end{array}
\right.
\end{equation}
Obviously,
\begin{align}\label{T206}
(-\Delta)^s V_k(\tilde{x}^k, \tilde{t}_k)-\Delta V_k(\tilde{x}^k, \tilde{t}_k)\geq C_{n,s}P.V.\int_{\mathbb{R}^n} \frac{V_k(\tilde{x}^k, \tilde{t}_k) - V_k(y, \tilde{t}_k)}{|\tilde{x}^k - y|^{n+2s}}dy \geq 0.
\end{align}
By \eqref{T203} and \eqref{T205}, we  obtain
$${\partial^\alpha_ t}V_k(\tilde{x}^k, \tilde{t}_k)={\partial^\alpha_ t}u(\tilde{x}^k, \tilde{t}_k)+ \varepsilon_k {\partial^\alpha_ t}\zeta_k(\tilde{x}^k, \tilde{t}_k)\geq 0,$$
which implies
\begin{align}\label{T207}
-{\partial^\alpha_ t}u(\tilde{x}^k, \tilde{t}_k)\leq \varepsilon_k {\partial^\alpha_ t}\zeta_k(\tilde{x}^k, \tilde{t}_k)\leq \frac{C_1 \varepsilon_k}{r_k^{2s}},
\end{align}
where $C_1$ is a positive constant.  Combining \eqref{T22}, \eqref{001}, \eqref{T203}, \eqref{T206}, \eqref{T207} and
$$|(-\Delta)^s\zeta_k(\tilde{x}^k, \tilde{t}_k)|\leq \frac{C_2}{r_k^{2s}},~|-\Delta \zeta_k(\tilde{x}^k, \tilde{t}_k)|\leq \frac{C_3}{r_k^2},$$  we conclude that
\begin{align}\label{T208}
0 \leq& (-\Delta)^s V_k(\tilde{x}^k, \tilde{t}_k)-\Delta V_k(\tilde{x}^k, \tilde{t}_k)\nonumber\\
 =& (-\Delta)^s u(\tilde{x}^k, \tilde{t}_k) + \varepsilon_k (-\Delta)^s \zeta_k(\tilde{x}^k, \tilde{t}_k)-\Delta u(\tilde{x}^k, \tilde{t}_k)-\varepsilon_k \Delta \zeta_k(\tilde{x}^k, \tilde{t}_k) \nonumber\\
 \leq &-\partial^\alpha_t u(\tilde{x}^k, \tilde{t}_k) + \varepsilon_k (-\Delta)^s \zeta_k(\tilde{x}^k, \tilde{t}_k)-\varepsilon_k \Delta \zeta_k(\tilde{x}^k, \tilde{t}_k) \nonumber\\
\leq& \frac{C_1 \varepsilon_k}{r_k^{2s}} + \frac{C_2 \varepsilon_k}{r_k^{2s}}+\frac{C_3 \varepsilon_k}{r_k^{2}} \nonumber\\
\leq &\frac{C_4 \varepsilon_k}{r_k^{2s}},
\end{align}
where $C_2$, $C_3$, and $C_4$ are positive constants, and
$$
r_k=\tfrac{1}{2}\,\mathrm{dist}(x^k,D^c)\geq \bar c>0 .
$$
Applying \eqref{T204} and Lemma~\ref{L1} to $V_k$ at the point
$(\tilde{x}^k,\tilde{t}_k)$, for any $r_k\geq \bar c>0$, we infer that
\begin{align}\label{T209}
\frac{C_0}{C_{n,s}} r_k^{2s}
\bigl\{(-\Delta)^s V_k(\tilde{x}^k, \tilde{t}_k)
-\Delta V_k(\tilde{x}^k, \tilde{t}_k)\bigr\}
+ C_0 r_k^{2s}
\int_{B_{r_k}^c(\tilde{x}^k)}
\frac{V_k(y, \tilde{t}_k)}{|\tilde{x}^k-y|^{n+2s}}\,dy
\geq V_k(\tilde{x}^k, \tilde{t}_k)
\geq M ,
\end{align}
where
$$
C_0=\frac{1}{\displaystyle\int_{B_1^c(0)}\frac{1}{|y|^{n+2s}}\,dy}.
$$

To derive a contradiction, we show that the left-hand side of
\eqref{T209} is strictly less than $M$.
It follows from \eqref{T208} that
\begin{align}\label{T210}
\frac{C_0}{C_{n,s}} r_k^{2s}
\bigl\{(-\Delta)^s V_k(\tilde{x}^k, \tilde{t}_k)
-\Delta V_k(\tilde{x}^k, \tilde{t}_k)\bigr\}
&\leq
\frac{C_0}{C_{n,s}} r_k^{2s}\frac{C_4\varepsilon_k}{r_k^{2s}}  \\
&\leq C_5\,\varepsilon_k ,
\end{align}
where $C_5$ is a positive constant.
Hence, for sufficiently large $k$, the first term on the left-hand side
of \eqref{T209} is arbitrarily small.

It remains to estimate the second term in \eqref{T209}, namely
$$
C_0 r_k^{2s}
\int_{B_{r_k}^c(\tilde{x}^k)}
\frac{V_k(y, \tilde{t}_k)}{|\tilde{x}^k-y|^{n+2s}}\,dy .
$$
Since $R>R/\sqrt[n]{2}$, by \eqref{T21} and the result of \cite{CWA}, we have
$$
\mathop{\underline{\lim}}\limits_{R\to+\infty}
\frac{\bigl|\{B_R(\tilde{x}^k)\setminus B_{R/\sqrt[n]{2}}(\tilde{x}^k)\}
\cap \Omega^c\bigr|}
{|B_R(\tilde{x}^k)|}
>0 .
$$
Consequently, there exist a positive constant $\tilde{C}$ and
sufficiently large $R_k$ such that
\begin{align}\label{T211}
\frac{|\{B_R(\tilde{x}^k) \setminus B_{R/\sqrt[n]{2}}(\tilde{x}^k)\} \cap \Omega^c|}{|B_R(\tilde{x}^k)|} \geq \tilde{C} > 0,~\mbox{ for }~R \geq R_k.
\end{align}
Let $r_k = R_k / \sqrt[n]{2}$, by \eqref{T204}, \eqref{T211} and  the fact
$$V_k(y, \tilde{t}_k) = u(y, \tilde{t}_k) \leq 0,~ y \in B_{r_k}^c(\tilde{x}^k) \cap \Omega^c,$$
we obtain
\begin{align}\label{T212}
& C_0 r_k^{2s} \int_{B_{r_k}^c(\tilde{x}^k)} \frac{V_k(y, \tilde{t}_k)}{|\tilde{x}^k - y|^{n+2s}} dy \nonumber\\
=& C_0 (R_k / \sqrt[n]{2})^{2s} \int_{B_{R_k/\sqrt[n]{2}}^c(\tilde{x}^k)} \frac{V_k(y, \tilde{t}_k)}{|\tilde{x}^k - y|^{n+2s}} dy \nonumber\\
=& C_0 (R_k / \sqrt[n]{2})^{2s} \{ \int_{B_{R_k/\sqrt[n]{2}}^c(\tilde{x}^k) \cap \Omega} \frac{V_k(y, \tilde{t}_k)}{|\tilde{x}^k - y|^{n+2s}} dy +\int_{B_{R_k/\sqrt[n]{2}}^c(\tilde{x}^k) \cap \Omega^c} \frac{V_k(y, \tilde{t}_k)}{|\tilde{x}^k - y|^{n+2s}} dy\nonumber\\
&+ \int_{B_{R_k/\sqrt[n]{2}}^c(\tilde{x}^k) \cap \Omega^c} \frac{M + \varepsilon_k}{|\tilde{x}^k - y|^{n+2s}} dy- \int_{B_{R_k/\sqrt[n]{2}}^c(\tilde{x}^k) \cap \Omega^c} \frac{M + \varepsilon_k}{|\tilde{x}^k - y|^{n+2s}} dy  \} \nonumber\\
\leq& C_0 (R_k / \sqrt[n]{2})^{2s} \{ \int_{B_{R_k/\sqrt[n]{2}}^c(\tilde{x}^k) \cap \Omega} \frac{M + \varepsilon_k}{|\tilde{x}^k - y|^{n+2s}} dy+ \int_{B_{R_k/\sqrt[n]{2}}^c(\tilde{x}^k) \cap \Omega^c} \frac{M+ \varepsilon_k}{|\tilde{x}^k - y|^{n+2s}} dy\nonumber\\
&- \int_{B_{R_k/\sqrt[n]{2}}^c(\tilde{x}^k) \cap \Omega^c} \frac{M + \varepsilon_k}{|\tilde{x}^k - y|^{n+2s}} dy  \} \nonumber\\
\leq &C_0 (R_k / \sqrt[n]{2})^{2s} \int_{B_{R_k/\sqrt[n]{2}}^c(\tilde{x}^k)} \frac{M + \varepsilon_k}{|\tilde{x}^k - y|^{n+2s}} dy - C_0 (R_k / \sqrt[n]{2})^{2s} \int_{B_{R_k/\sqrt[n]{2}}^c(\tilde{x}^k) \cap \Omega^c} \frac{M + \varepsilon_k}{|\tilde{x}^k - y|^{n+2s}} dy \nonumber\\
=& M + \varepsilon_k - C_0 (R_k / \sqrt[n]{2})^{2s} \int_{B_{R_k/\sqrt[n]{2}}^c(\tilde{x}^k) \cap \Omega^c} \frac{M + \varepsilon_k}{|\tilde{x}^k - y|^{n+2s}} dy \nonumber\\
\leq& M + \varepsilon_k - C_0 (R_k / \sqrt[n]{2})^{2s} \int_{\{B_{R_k}(\tilde{x}^k) \setminus B_{R_k/\sqrt[n]{2}}(\tilde{x}^k)\} \cap \Omega^c} \frac{M + \varepsilon_k}{|\tilde{x}^k - y|^{n+2s}} dy \nonumber\\
\leq& M + \varepsilon_k - (M + \varepsilon_k)C_0 (R_k / \sqrt[n]{2})^{2s} R_k^{-(n+2s)} |\{B_{R_k}(\tilde{x}^k) \setminus B_{R_k/\sqrt[n]{2}}(\tilde{x}^k)\} \cap \Omega^c|\nonumber\\
\leq &M + \varepsilon_k - (M + \varepsilon_k) C_0 (R_k / \sqrt[n]{2})^{2s} R_k^{-(n+2s)} \tilde{C} |B_{R_k}(\tilde{x}^k)| \nonumber\\
=& (1 - \bar C)(M + \varepsilon_k),
\end{align}
where $\bar C$ is a positive constant. Combining $r_k=R_k / \sqrt[n]{2}$, \eqref{T208}, \eqref{T209}, \eqref{T210} and \eqref{T212}, one has
\begin{align*}
M\leq&\frac{C_0}{C_{n,s}} r_k^{2s} \{(-\Delta)^s V_k(\tilde{x}^k, \tilde{t}_k)-\Delta V_k(\tilde{x}^k, \tilde{t}_k)\} +C_0 r_k^{2s} \int_{B_r^c(\tilde{x}^k)} \frac{V_k(y, \tilde{t}_k)}{|\tilde{x}^k - y|^{n+2s}} dy\\
\leq&C_5 \varepsilon_k+(1 - \bar C)(M + \varepsilon_k),
\end{align*}
furthermore,
$$(C_5-\bar C+1)\varepsilon_k\geq \bar C M,$$
that is, $$\varepsilon_k\geq C_6 M>0,$$
where $C_5$ and $C_6$ are positive constants.
This is a contradiction when $k$ is sufficiently large. Therefore, the assumption \eqref{T201} is incorrect.

Hence, this verifies \eqref{T23} and  the proof of Theorem \ref{T2} is completed.
\end{proof}

\section{Monotonicity and one-dimensional symmetry for  elliptic equations}

In this section, we shall prove Theorem \ref{D2}, Theorem \ref{D4} and Theorem \ref{D5}.

\subsection{Monotonicity in bounded domains}

\begin{proof}[Proof of Theorem \ref{D2}]
For $\tau \ge 0$, let $u_\tau(x)=u(x',x_n+\tau)$.
Then $u_\tau$ is defined in $\Omega_\tau:=\Omega-\tau e_n$, which is obtained by sliding $\Omega$ downward by $\tau$ along the $x_n$-axis, where $e_n=(0',1)$.

\begin{center}
\begin{tikzpicture}[scale=1]
    \draw[->][thick] (-5.5,0) -- (6,0) node at (6.5,0) {$x_n$};
    \draw [red](2.0,0) ellipse (2.5 and 1.5);
   \draw [blue] (-1.8,0) ellipse (2.5 and 1.5);
   \node at (4,-0.4) {$\Omega$};
   \node at (-3.8,-0.4) {$\Omega_\tau$};
   \draw  (0,-0.5) -- (-0.5,-1.7);
    \node at (-0.7,-1.8) {$F_\tau$};
    \draw[->][red] (-1,1.8) -- (1,1.8);
     \draw[dashed, line width=1.5pt] (0.3,0.3) -- (4.1,0.3);
    \fill[black] (0.3,0.3) circle (2pt);
    \fill[black] (4.1,0.3) circle (2pt);
    \node at (0.2,0.6) {$x$};
   \node at (4,0.6) {$x_\tau$};
    \begin{scope}
     \clip (0.1,0) ellipse (0.5 and 0.9);
     \foreach \y in {-0.85,-0.75,...,0.95}
        \draw [dashed][gray][very thin](-0.4,\y) -- (0.7,\y);
    \end{scope}
\node [below=3.0cm, align=flush center,text width=12cm] at (0,0.5)
        {$Figure$ 1. The domain $F_{\tau}$.};
\end{tikzpicture}
\end{center}

Let
$$
F_\tau := \Omega_\tau \cap \Omega \qquad \text{(see Figure~1)}.
$$
Define
$$
\bar{\tau} := \sup\{\tau>0 \mid F_\tau \neq \emptyset\},
$$
and
$$
w_\tau(x) := u(x) - u_\tau(x), \qquad x \in F_\tau,
$$
where $u_\tau$ satisfies the same equation \eqref{D21} in $\Omega_\tau$ as $u$ does in $\Omega$.
Then $w_\tau$ satisfies
\begin{align}\label{D201}
(-\Delta)^s w_\tau(x) - \Delta w_\tau(x) - c(x) w_\tau(x) = 0,
\qquad x \in F_\tau,
\end{align}
where
$$
c(x) := \frac{g(u(x)) - g(u_\tau(x))}{u(x) - u_\tau(x)}.
$$
By the Lipschitz continuity of $g$, the function $c(x)$ belongs to $L^\infty(F_\tau)$ and satisfies
$$
c(x) \le C \qquad \text{ for all }~ x \in F_\tau.
$$
Moreover, from the exterior condition \textbf{(A)}, we have
\begin{align}\label{D2011}
w_\tau(x) \le 0, \qquad x \in F_\tau^{\,c}.
\end{align}

The key step of the proof is to establish that
\begin{align}\label{D202}
w_\tau(x) < 0, \qquad x \in F_\tau, \quad \text{ for all }~ 0 < \tau < \bar{\tau},
\end{align}
which implies that $u$ is strictly increasing in the $x_n$-direction.

Next, we prove \eqref{D202} in two steps.

\medskip
\noindent\textbf{Step~1.}
We first show that
\begin{align}\label{D203}
w_\tau(x) \le 0 \qquad \text{for } \tau \text{ sufficiently close to } \bar{\tau},
\end{align}
when the set $F_\tau$ is narrow.
Combining \eqref{D201} and \eqref{D2011}, and applying Theorem~\ref{D1}, we obtain \eqref{D203}.

\medskip
\noindent\textbf{Step~2.}
Inequality \eqref{D203} provides a starting position from which the sliding argument can be carried out.
We then decrease $\tau$ as long as \eqref{D203} holds, until reaching a limiting position.
Define
$$
\tau_0 := \inf\left\{ \tau \in (0,\bar{\tau}) \,\big|\, w_\tau(x) \le 0 \text{ in } F_\tau \right\}.
$$
We claim that
$$
\tau_0 = 0.
$$

Suppose, by contradiction, that $\tau_0 > 0$.
We will show that the domain $\Omega_\tau$ can be slid upward slightly further while still preserving the inequality
\begin{align}\label{D204}
w_\tau(x) \le 0 \qquad \text{in }~ F_\tau,
\quad \text{for all }~ \tau_0 - \varepsilon < \tau \le \tau_0,
\end{align}
for some $\varepsilon>0$.
This contradicts the definition of $\tau_0$.

By assumption \textbf{(A)}, we have
$$
w_{\tau_0}(x) < 0 \qquad \text{for } x \in \Omega \cap \partial F_{\tau_0}.
$$
Together with
$$
w_{\tau_0}(x) \le 0 \qquad \text{in }~ F_{\tau_0},
$$
it follows that
$$
w_{\tau_0} \not\equiv 0 \qquad \text{in }~ F_{\tau_0}.
$$
If there exists a point $\tilde{x} \in F_{\tau_0}$ such that
$$
w_{\tau_0}(\tilde{x}) = 0,
$$
then $\tilde{x}$ is a maximum point of $w_{\tau_0}$.
A direct computation yields that
\begin{align*}
(-\Delta)^s w_{\tau_0}(\tilde{x})-\Delta w_{\tau_0}(\tilde{x})\geq& (-\Delta)^s w_{\tau_0}(\tilde{x})\\
=&C_{n,s}P.V. \int_{\mathbb{R}^n} \frac{w_{\tau_0}(\tilde{x}) - w_{\tau_0}(y)}{|\tilde{x} - y|^{n+2s}} dy\\
=&C_{n,s}P.V. \{\int_{F_{\tau_0}} \frac{  - w_{\tau_0}(y)}{|\tilde{x} - y|^{n+2s}} dy+\int_{\mathbb{R}^n \setminus F_{\tau_0}} \frac{  - w_{\tau_0}(y)}{|\tilde{x} - y|^{n+2s}} dy\}\\
\geq&C_{n,s}P.V. \int_{F_{\tau_0}} \frac{  - w_{\tau_0}(y)}{|\tilde{x} - y|^{n+2s}} dy\\
>&0,
\end{align*}
which contradicts the identity
\begin{align*}
(-\Delta)^s w_{\tau_0}(\tilde{x}) - \Delta w_{\tau_0}(\tilde{x})
= g(u(\tilde{x})) - g(u_{\tau_0}(\tilde{x})) = 0,
\end{align*}
which follows from the first inequality in \eqref{D21}.
Therefore,
\begin{align}\label{D205}
w_{\tau_0}(x) < 0 \qquad \text{ for all }~ x \in F_{\tau_0}.
\end{align}

Next, we carve out from $F_{\tau_0}$ a closed set $\mathcal{D} \subset F_{\tau_0}$ such that
$F_{\tau_0} \setminus \mathcal{D}$ is narrow.
By \eqref{D205}, there exists a positive constant $\hat C_0$ such that
\begin{align*}
w_{\tau_0}(x) \le -\hat C_0 < 0, \qquad x \in \mathcal{D}.
\end{align*}
By the continuity of $w_\tau$ with respect to $\tau$, for sufficiently small $\varepsilon>0$ we obtain
\begin{align*}
w_{\tau_0-\varepsilon}(x) \le 0, \qquad x \in \mathcal{D}.
\end{align*}
Moreover, by the exterior condition \textbf{(A)}, we have
\begin{align*}
w_{\tau_0-\varepsilon}(x) \le 0, \qquad x \in (F_{\tau_0-\varepsilon})^{c}.
\end{align*}

Since
$$
(F_{\tau_0-\varepsilon} \setminus \mathcal{D})^{c}
= \mathcal{D} \cup (F_{\tau_0-\varepsilon})^{c},
$$
it follows that
\begin{equation*}
\left\{
\begin{array}{ll}
(-\Delta)^s w_{\tau_0-\varepsilon}(x)
- \Delta w_{\tau_0-\varepsilon}(x)
- c_{\tau_0-\varepsilon}(x)\, w_{\tau_0-\varepsilon}(x) = 0,
& x \in F_{\tau_0-\varepsilon} \setminus \mathcal{D}, \\
w_{\tau_0-\varepsilon}(x) \le 0,
& x \in (F_{\tau_0-\varepsilon} \setminus \mathcal{D})^{c}.
\end{array}
\right.
\end{equation*}
Applying Theorem~\ref{D1}, we conclude that \eqref{D204} holds.
This contradicts the definition of $\tau_0$.
Consequently,
\begin{align}\label{07}
w_\tau(x) \le 0 \qquad \text{for all }~ x \in F_\tau,
\quad 0 < \tau < \bar{\tau}.
\end{align}

We now show that \eqref{D202} holds.
By \eqref{07} and the fact that
\begin{align*}
w_\tau \not\equiv 0 \qquad \text{in }~ F_\tau,
\quad 0 < \tau < \bar{\tau},
\end{align*}
if there exists a point $\bar x \in F_\tau$ such that $w_\tau(\bar x)=0$,
then $\bar x$ is a maximum point of $w_\tau$.
Consequently,
$$
(-\Delta)^s w_\tau(\bar x) - \Delta w_\tau(\bar x)
\ge (-\Delta)^s w_\tau(\bar x)
= C_{n,s}\,\mathrm{P.V.}\!\int_{\mathbb{R}^n}
\frac{w_\tau(\bar x)-w_\tau(y)}{|\bar x-y|^{n+2s}}\,dy > 0,
$$
which contradicts
$$
(-\Delta)^s w_\tau(\bar x) - \Delta w_\tau(\bar x)
= g(u(\bar x)) - g(u_\tau(\bar x)) = 0.
$$
Therefore, \eqref{D202} is proved.

This completes the proof of Theorem~\ref{D2}.
\end{proof}

\subsection{Monotonicity in $\mathbb{R}_+^n$}

\begin{proof}[Proof of Theorem~\ref{D4}]
Since $g(u)$ is monotone decreasing in $u$, we have
$$
g(u(x)) - g(u_\tau(x)) \le 0
\qquad \text{at any point } x \in \mathbb{R}^n_+ \text{ where } w_\tau(x) > 0.
$$
It follows from the first equation in \eqref{D41} that
$$
(-\Delta)^s w_\tau(x) - \Delta w_\tau(x) \le 0
\qquad \text{at any point } x \in \mathbb{R}^n_+ \text{ where } w_\tau(x) > 0.
$$

Moreover, the exterior condition
$$
w_\tau(x) \le 0, \qquad x \in \mathbb{R}^n \setminus \mathbb{R}^n_+,
$$
holds trivially.
Applying the maximum principle in unbounded domains (Theorem~\ref{D3}) with
$\Omega = \mathbb{R}^n_+$, we obtain
\begin{align}\label{D402}
w_\tau(x) \le 0 \qquad \text{ for all } x \in \mathbb{R}^n_+,
\quad \text{ for every } \tau > 0.
\end{align}

We now strengthen \eqref{D402} to show that
\begin{align}\label{D401}
w_\tau(x) < 0 \qquad \text{ for all } x \in \mathbb{R}^n_+,
\quad \text{ for every } \tau > 0.
\end{align}
Suppose, by contradiction, that there exist $x_0 \in \mathbb{R}^n_+$ and $\tau_0 > 0$ such that
\begin{align}\label{D403}
w_{\tau_0}(x_0) = \max_{\mathbb{R}^n} w_{\tau_0}(x) = 0.
\end{align}
Since $w_{\tau_0} \not\equiv 0$ in $\mathbb{R}^n$, it follows from
\eqref{D402} and \eqref{D403} that
\begin{align}\label{D404}
(-\Delta)^s w_{\tau_0}(x_0) - \Delta w_{\tau_0}(x_0)
\ge (-\Delta)^s w_{\tau_0}(x_0)
= C_{n,s}\,\mathrm{P.V.}\!\int_{\mathbb{R}^n}
\frac{-w_{\tau_0}(y)}{|x_0 - y|^{n+2s}}\,dy > 0.
\end{align}

On the other hand, by the first equation in \eqref{D41} and \eqref{D403}, we have
\begin{align*}
(-\Delta)^s w_{\tau_0}(x_0) - \Delta w_{\tau_0}(x_0)
&= (-\Delta)^s u(x_0) - \Delta u(x_0)
   - \big[(-\Delta)^s u_{\tau_0}(x_0) - \Delta u_{\tau_0}(x_0)\big] \\
&= g(u(x_0)) - g(u_{\tau_0}(x_0)) \\
&= 0,
\end{align*}
which contradicts \eqref{D404}.
Therefore, \eqref{D401} holds.

This completes the proof of Theorem~\ref{D4}.
\end{proof}

\subsection{One-dimensional symmetry in $\mathbb{R}^n$}

\begin{proof}[Proof of Theorem \ref{D5}]
The proof of Theorem~\ref{D5} is divided into three steps.

\medskip
\noindent\textbf{Step~1.}
We first show that for $\tau$ sufficiently large,
\begin{align}\label{D501}
w_\tau(x) := u(x) - u_\tau(x) \le 0,
\qquad x \in \mathbb{R}^n.
\end{align}
Suppose that \eqref{D501} does not hold.
Then there exists a constant $M>0$ such that
\begin{align}\label{D502}
\sup_{\mathbb{R}^n} w_\tau(x) = M > 0.
\end{align}
To derive a contradiction with \eqref{D502}, we introduce the auxiliary function
$$
\widetilde W_\tau(x) := w_\tau(x) - \frac{M}{2}.
$$

By assumption \eqref{D52}, there exists a sufficiently large constant $\lambda>0$ such that
\begin{align}\label{5D1}
|u(x',x_n)| \ge 1 - \gamma,
\qquad x' \in \mathbb{R}^{n-1}, \ |x_n| \ge \lambda.
\end{align}
Equivalently,
$$
u(x',x_n) \ge 1 - \gamma \quad \text{for } x_n \ge \lambda,
\qquad
u(x',x_n) \le -1 + \gamma \quad \text{for }~ x_n \le -\lambda.
$$
Moreover, again by \eqref{D52}, we may choose a constant $A>\lambda$ such that
\begin{align}\label{D505}
\widetilde W_\tau(x) \le 0,
\qquad x' \in \mathbb{R}^{n-1}, \ x_n \ge A.
\end{align}

We now apply the maximum principle in unbounded domains (Theorem~\ref{D3}) to prove that
\begin{align}\label{q1}
\widetilde W_\tau(x) \le 0,
\qquad x \in \mathbb{R}^n.
\end{align}
Let
$$
\mathcal{H} := \mathbb{R}^{n-1} \times (-\infty, A).
$$
From \eqref{D505}, it follows that
\begin{align}\label{5D5}
\tilde{W}_\tau(x) \leq 0,~ x \in \mathcal{H}^c.
\end{align}
This shows that $\widetilde W_\tau$ satisfies the exterior condition of
Theorem~\ref{D3}.
We next verify the differential inequality satisfied by $\widetilde W_\tau$
in the domain $\mathcal H$.
By \eqref{D51}, \eqref{D54}, and the definition of $\widetilde W_\tau$, we obtain
\begin{align}\label{5D6}
(-\Delta)^s \widetilde W_\tau(x) - \Delta \widetilde W_\tau(x)
&= (-\Delta)^s w_\tau(x) - \Delta w_\tau(x) \nonumber \\
&= g(x,u(x)) - g(x',x_n+\tau,u_\tau(x)) \nonumber \\
&\le g(x,u(x)) - g(x,u_\tau(x)).
\end{align}

We now distinguish three cases:
$$
|x_n| \le \lambda, \qquad x_n > \lambda, \qquad x_n < -\lambda.
$$

\medskip
\noindent\textit{Case~1.} $|x_n| \le \lambda$.

If $|x_n| \le \lambda$ and $\tau \ge 2\lambda$, then $x_n+\tau \ge \lambda$.
By \eqref{5D1}, for any $x' \in \mathbb{R}^{n-1}$ we have
$$
u(x) > u_\tau(x) \ge 1 - \gamma
$$
at points where $w_\tau(x) > 0$.
Using the monotonicity assumption \eqref{D53} on $g$, we obtain
\begin{align}\label{5D2}
g(x,u(x)) \le g(x,u_\tau(x)),
\qquad x' \in \mathbb{R}^{n-1}, \ |x_n| \le \lambda,
\end{align}
at points satisfying $w_\tau(x) > 0$.

\medskip
\noindent\textit{Case~2.} $x_n > \lambda$.

If $x_n > \lambda$, then by \eqref{5D1}, for any $x' \in \mathbb{R}^{n-1}$,
$$
u(x) > u_\tau(x) \ge 1 - \gamma
$$
at points where $w_\tau(x) > 0$.
Again, by the monotonicity assumption \eqref{D53} on $g$, it follows that
\begin{align}\label{5D3}
g(x,u(x)) \le g(x,u_\tau(x)),
\qquad x' \in \mathbb{R}^{n-1}, \ x_n > \lambda,
\end{align}
at points satisfying $w_\tau(x) > 0$.

\textit{Case 3.} $x_n < -\lambda$.

If $x_n < -\lambda$, then by \eqref{5D1}, for any $x' \in \mathbb{R}^{n-1}$, we have
$$
u_\tau(x) < u(x) \leq -1 + \gamma,
$$
at points where $w_\tau(x) > 0$. Using the monotonicity assumption \eqref{D53} on $g$, it follows that
\begin{align}\label{5D4}
g(x,u(x)) \le g(x,u_\tau(x)), \quad x' \in \mathbb{R}^{n-1},~ x_n < -\lambda,
\end{align}
for points satisfying $w_\tau(x) > 0$.

Combining \eqref{5D6}, \eqref{5D2}, \eqref{5D3}, and \eqref{5D4}, we conclude that if $w_\tau(x) > 0$, then
$$
(-\Delta)^s \tilde{W}_\tau(x) - \Delta \tilde{W}_\tau(x) \le g(x,u(x)) - g(x,u_\tau(x)) \le 0.
$$
More precisely, at points where $\tilde{W}_\tau(x) > 0$, we have
$$
(-\Delta)^s \tilde{W}_\tau(x) - \Delta \tilde{W}_\tau(x) \le g(x,u(x)) - g(x,u_\tau(x)) \le 0.
$$

Together with the exterior condition for $\tilde{W}_\tau(x)$ in $\mathcal{H}^c$ (see \eqref{5D5}) and the maximum principle in unbounded domains (Theorem \ref{D3}), this implies \eqref{q1}, i.e.,
$$
w_\tau(x) \le \frac{M}{2}, \quad x \in \mathbb{R}^n,
$$
which contradicts \eqref{D502}, where $\mathop{\sup}\limits_{\mathbb{R}^n} w_\tau(x) := M > 0$. Therefore, we conclude
\begin{align}\label{D508}
w_\tau(x) \le 0, \quad x \in \mathbb{R}^n, \quad \text{for any }~ \tau \ge 2\lambda.
\end{align}

Hence, \eqref{D501} holds.

\textbf{Step 2.} Inequality \eqref{D508} provides a starting point from which we perform the sliding method. In this step, starting from $\tau = 2\lambda$, we decrease $\tau$ and aim to show that for any $0 < \tau < 2\lambda$,
\begin{align}\label{D509}
w_\tau(x) \le 0, \quad x \in \mathbb{R}^n.
\end{align}

To prove \eqref{D509}, define
$$
\tau_0 := \inf\{\tau \mid w_\tau(x) \le 0,~ x \in \mathbb{R}^n\},
$$
and we will show that $\tau_0 = 0$. Otherwise, we can decrease $\tau_0$ slightly and still have
$$
w_\tau(x) \le 0, \quad x \in \mathbb{R}^n, \quad \forall \tau \in (\tau_0 - \varepsilon, \tau_0],
$$
which contradicts the definition of $\tau_0$.

First, we prove that
\begin{align}\label{D510}
\sup_{|x_n| \le \lambda,~ x' \in \mathbb{R}^{n-1}} w_{\tau_0}(x) < 0.
\end{align}
If \eqref{D510} does not hold, then
$$
\sup_{|x_n| \le \lambda,~ x' \in \mathbb{R}^{n-1}} w_{\tau_0}(x) = 0,
$$
so there exists a sequence
$$
\{x^k\} \subset \mathbb{R}^{n-1} \times [-\lambda, \lambda], \quad k = 1, 2, \dots
$$
such that
\begin{align}\label{q2}
w_{\tau_0}(x^k) \to 0 \quad \text{as }~ k \to +\infty.
\end{align}

Let
$$
\phi_k(x) := \phi(x - x^k), \quad \text{where } \quad
\phi(x) :=
\begin{cases}
a \, e^{\frac{1}{|x|^2 - 1}}, ~& |x| < 1, \\
0, ~& |x| \ge 1,
\end{cases}
$$
and take $a = e$ so that
\begin{align}\label{D511}
\phi(0) = \max_{\mathbb{R}^n} \phi(x) = 1.
\end{align}

From \eqref{q2}, there exists a sequence $\{\varepsilon_k\} \searrow 0$ such that
$$
w_{\tau_0}(x^k) + \varepsilon_k \phi_k(x^k) > 0.
$$

For any $x \in \mathbb{R}^n \setminus B_1(x^k)$, since $w_{\tau_0}(x) \le 0$ and $\phi_k(x) = 0$, we have
$$
w_{\tau_0}(x^k) + \varepsilon_k \phi_k(x^k) > w_{\tau_0}(x) + \varepsilon_k \phi_k(x), \quad \forall ~x \in \mathbb{R}^n \setminus B_1(x^k).
$$
Then there exists a point $\tilde{x}^k \in B_1(x^k)$ such that
\begin{align}\label{D512}
\varepsilon_k \ge w_{\tau_0}(\tilde{x}^k) + \varepsilon_k \phi_k(\tilde{x}^k)
= \max_{\mathbb{R}^n} \big( w_{\tau_0}(x) + \varepsilon_k \phi_k(x) \big) > 0,
\end{align}
which implies
\begin{align}\label{D513}
(-\Delta)^s (w_{\tau_0} + \varepsilon_k \phi_k)(\tilde{x}^k) - \Delta (w_{\tau_0} + \varepsilon_k \phi_k)(\tilde{x}^k)
&\ge C_{n, s}  \text{P.V.} \int_{\mathbb{R}^n} \frac{(w_{\tau_0} + \varepsilon_k \phi_k)(\tilde{x}^k) - (w_{\tau_0} + \varepsilon_k \phi_k)(y)}{|\tilde{x}^k - y|^{n+2s}} \, dy \nonumber\\
&\ge 0.
\end{align}

Meanwhile, by \eqref{D511} and \eqref{D512}, we have
$$
w_{\tau_0}(\tilde{x}^k) + \varepsilon_k \phi_k(\tilde{x}^k) \ge w_{\tau_0}(x^k) + \varepsilon_k \phi_k(x^k)
\quad \text{and} \quad
\phi_k(\tilde{x}^k) \le \phi_k(x^k),
$$
which implies
$$
0 \ge w_{\tau_0}(\tilde{x}^k) \ge w_{\tau_0}(x^k).
$$
Therefore, by \eqref{q2}, we obtain
\begin{equation}\label{q4}
w_{\tau_0}(\tilde{x}^k) \to 0 \quad \text{as }~ k \to +\infty.
\end{equation}

Combining \eqref{D51}, \eqref{D53}, and \eqref{D513}, we deduce
\begin{align}\label{D514}
0\leq &(-\Delta)^s (w_{\tau_0}+ \varepsilon_k \phi_k)(\tilde{x}^k)\nonumber\\
\leq&(-\Delta)^s (w_{\tau_0}+ \varepsilon_k \phi_k)(\tilde{x}^k)-\Delta (w_{\tau_0}+ \varepsilon_k \phi_k)(\tilde{x}^k)\nonumber\\
=&(-\Delta)^s w_{\tau_0}(\tilde{x}^k)+\varepsilon_k(-\Delta)^s  \phi_k(\tilde{x}^k)-\Delta w_{\tau_0}(\tilde{x}^k)-\varepsilon_k \Delta \phi_k(\tilde{x}^k)\nonumber\\
=&g(\tilde{x}^k,u(\tilde{x}^k))-g((\tilde{x}^k)',\tilde{x}^k_n+\tau_0,u_{\tau_0}(\tilde{x}^k))+\varepsilon_k(-\Delta)^s  \phi_k(\tilde{x}^k)-\varepsilon_k \Delta \phi_k(\tilde{x}^k)\nonumber\\
\leq&g(\tilde{x}^k,u(\tilde{x}^k))-g(\tilde{x}^k,u_{\tau_0}(\tilde{x}^k))+\varepsilon_k(-\Delta)^s  \phi_k(\tilde{x}^k)-\varepsilon_k \Delta \phi_k(\tilde{x}^k)\nonumber\\
\leq&g(\tilde{x}^k,u(\tilde{x}^k))-g(\tilde{x}^k,u_{\tau_0}(\tilde{x}^k))+\varepsilon_k C\to 0,~\mbox{ as }~k\to +\infty,
\end{align}
where $C$ is a positive constant. Here, we have used \eqref{q4} and the fact that
$$
|(-\Delta)^s \phi_k(\tilde{x}^k) - \Delta \phi_k(\tilde{x}^k)| \le C.
$$

Moreover, by Lemma \ref{L2} and \eqref{D512}, we have
\begin{align*}
&\frac{C_0}{C_{n,s}} r^{2s} \big\{(-\Delta)^s (w_{\tau_0} + \varepsilon_k \phi_k)(\tilde{x}^k) - \Delta (w_{\tau_0} + \varepsilon_k \phi_k)(\tilde{x}^k)\big\} \\
&\quad + C_0 r^{2s} \int_{B_r^c(\tilde{x}^k)} \frac{(w_{\tau_0} + \varepsilon_k \phi_k)(y)}{|\tilde{x}^k - y|^{n+2s}} \, dy \\
&\ge (w_{\tau_0} + \varepsilon_k \phi_k)(\tilde{x}^k), \quad \text{for any } r > 0,
\end{align*}
where
$$
C_0 = \frac{1}{\int_{B_r^c(0)} |y|^{-n-2s} \, dy}.
$$

Using \eqref{D512}, \eqref{q4}, \eqref{D514}, the above average inequality, and the fact that $\varepsilon_k \to 0$ as $k \to +\infty$, we obtain
$$
\int_{B_r^c(\tilde{x}^k)} \frac{w_{\tau_0}(y)}{|\tilde{x}^k - y|^{n+2s}} \, dy \to 0, \quad \text{as }~ k \to +\infty,
$$
which implies that for any fixed $r > 0$,
\begin{align}\label{D515}
w_{\tau_0}(x) \to 0, \quad x \in B_r^c(\tilde{x}^k), \quad \text{as }~ k \to +\infty.
\end{align}

Denote
$$
u_k(x) := u(x + \tilde{x}^k).
$$
Since $u(x)$ is uniformly continuous, by the Arzel\`{a}-Ascoli theorem, up to extracting a subsequence (still denoted by $u_k$), we have
$$
u_k(x) \to u_\infty(x), \quad x \in \mathbb{R}^n, \quad \text{as }~ k \to +\infty.
$$
Together with \eqref{D515}, we deduce
$$u_\infty(x)\equiv (u_\infty)_{\tau_0}(x),~x\in B_r^c(\tilde{x}^k).$$
Hence, for any $i \in \mathbb{N}$ and any fixed $r > 0$, we infer
\begin{align}\label{D516}
u_\infty(x', x_n) &= u_\infty(x', x_n + \tau_0) = u_\infty(x', x_n + 2\tau_0) \nonumber \\
&= \cdots = u_\infty(x', x_n + i\tau_0), \quad x \in B_r^c(\tilde{x}^k).
\end{align}

By the boundedness of the $n$-th component $\{\tilde{x}_n^k\}$ and the asymptotic condition \eqref{D52}, we have
$$
u_\infty(x', x_n) \longrightarrow \pm 1 \quad \text{as } x_n \to \pm \infty, \text{ uniformly in } x' = (x_1, \dots, x_{n-1}).
$$

Now, take $x_n$ sufficiently negative so that $u_\infty(x', x_n)$ is close to $-1$, and choose $i$ sufficiently large so that $u_\infty(x', x_n + i\tau_0)$ is close to $1$. This contradicts \eqref{D516}. Thus, \eqref{D510} is valid.

To show that there exists a small positive constant $\varepsilon$ such that
$$
w_\tau(x) \le 0, \quad x \in \mathbb{R}^n, \quad \forall \tau \in (\tau_0 - \varepsilon, \tau_0], \quad \text{for } \tau_0 > 0,
$$
note that \eqref{D510} implies the existence of such an $\varepsilon > 0$ satisfying
\begin{align}\label{D517}
\sup_{|x_n| \le \lambda,~ x' \in \mathbb{R}^{n-1}} w_\tau(x) \le 0, \quad \forall \tau \in (\tau_0 - \varepsilon, \tau_0].
\end{align}

It remains to show that
\begin{align}\label{D518}
\sup_{|x_n| > \lambda,~ x' \in \mathbb{R}^{n-1}} w_\tau(x) \le 0, \quad \forall \tau \in (\tau_0 - \varepsilon, \tau_0].
\end{align}

If \eqref{D518} does not hold, then there exist some $\tau \in (\tau_0 - \varepsilon, \tau_0]$ and a constant $M > 0$ such that
\begin{align}\label{D519}
\sup_{|x_n| > \lambda,~ x' \in \mathbb{R}^{n-1}} w_\tau(x) := M > 0.
\end{align}

By the asymptotic condition on $u$ in \eqref{D52}, there exists a constant $A > \lambda$ such that
\begin{align}\label{D520}
w_\tau(x) \le \frac{M}{2}, \quad x' \in \mathbb{R}^{n-1},~ |x_n| \ge A.
\end{align}

Denote
$$
\mathcal{M} := \{(x', x_n) \in \mathbb{R}^{n-1} \times \mathbb{R} \mid x' \in \mathbb{R}^{n-1},~ \lambda < |x_n| < A\},
$$
(see the blue section of Figure 2 below) and consider the differential inequality in $\mathcal{M}$ satisfied by
$$
\bar{W}_\tau(x) := w_\tau(x) - \frac{M}{2}.
$$

Combining \eqref{D517} and \eqref{D520}, we have the exterior condition
\begin{align}\label{D521}
\bar{W}_\tau(x) \le 0, \quad x \in \mathcal{M}^c, \quad \forall \tau \in (\tau_0 - \varepsilon, \tau_0].
\end{align}

\begin{center}
\begin{tikzpicture}[scale=1]
    \draw[->][thick,line width=1.5pt] (-5,0) -- (5,0) node at (5.3,0) {$x_n$};
    \draw[->][thick,line width=1.5pt] (0,-2.4) -- (0,2.7) node at (0,2.9) {$x'$};
    \draw[red,line width=2pt, dashed] (1,-2) -- (1,2);
    \draw[red,line width=2pt, dashed] (4,-2) -- (4,2);
    \draw[red,line width=2pt, dashed] (-1,-2) -- (-1,2);
    \draw[red,line width=2pt, dashed] (-4,-2) -- (-4,2);
    \fill[blue!60] (1,-2)--(1,2)--(4,2)--(4,-2);
    \fill[blue!60] (-1,-2)--(-1,2)--(-4,2)--(-4,-2);
    \node at (1,-2.4) {$x_n=\lambda$};
    \node at (4,-2.4) {$x_n=A$};
    \node at (-1,-2.4) {$x_n=-\lambda$};
    \node at (-4,-2.4) {$x_n=-A$};
\node [below=3.8cm, align=flush center,text width=12cm] at (0,0.5)
        {$Figure$ 2. The domain   $\mathcal{M}$.};
\end{tikzpicture}
\end{center}

For any $\tau \in (\tau_0 - \varepsilon, \tau_0]$ and $x' \in \mathbb{R}^{n-1}$, if $\lambda < x_n < A$ at points in $\mathcal{M}$ where $\bar{W}_\tau(x) > 0$, we have
$$
u(x) > u_\tau(x) + \frac{M}{2} \ge u_\tau(x) \ge 1 - \gamma.
$$
Then, by the monotonicity assumption \eqref{D53} on the function $g$, it follows that
\begin{align}\label{D522}
g(x,u(x)) \le g(x,u_\tau(x)), \quad x' \in \mathbb{R}^{n-1},~ \lambda < x_n < A,
\end{align}
at the points satisfying $\bar{W}_\tau(x) > 0$.

Similarly, if $-A < x_n < -\lambda$ at points in $\mathcal{M}$ where $\bar{W}_\tau(x) > 0$, for any $\tau \in (\tau_0 - \varepsilon, \tau_0]$ and $x' \in \mathbb{R}^{n-1}$, we have
$$
-1 + \gamma \ge u(x) > u_\tau(x).
$$
By the monotonicity assumption \eqref{D53} on $g$, we then obtain
\begin{align}\label{D523}
g(x,u(x)) \le g(x,u_\tau(x)), \quad x' \in \mathbb{R}^{n-1},~ -A < x_n < -\lambda,
\end{align}
at points satisfying $\bar{W}_\tau(x) > 0$.

Therefore, by \eqref{D51}, \eqref{D54}, \eqref{D522}, and \eqref{D523}, at points where $\bar{W}_\tau(x) > 0$, we deduce
\begin{align}\label{D524}
(-\Delta)^s \bar{W}_\tau(x) - \Delta \bar{W}_\tau(x)
&= (-\Delta)^s w_\tau(x) - \Delta w_\tau(x) \nonumber \\
&= g(x,u(x)) - g(x', x_n + \tau, u_\tau(x)) \nonumber \\
&\le g(x,u(x)) - g(x,u_\tau(x)) \nonumber \\
&\le 0, \quad x \in \mathcal{M},~ \forall \tau \in (\tau_0 - \varepsilon, \tau_0].
\end{align}

Applying \eqref{D521} and \eqref{D524} in Theorem \ref{D3}, we infer
$$
\bar{W}_\tau(x) \le 0, \quad x \in \mathbb{R}^n,~ \forall \tau \in (\tau_0 - \varepsilon, \tau_0],
$$
which contradicts \eqref{D519}. This proves that \eqref{D518} is valid, and therefore
$$
w_\tau(x) \le 0, \quad x \in \mathbb{R}^n,~ \forall \tau \in (\tau_0 - \varepsilon, \tau_0],
$$
contradicting the definition of $\tau_0$.

Hence, we conclude that \eqref{D509} is established.

\textbf{Step 3.}
In this step, we show that $u(x)$ is strictly increasing with respect to $x_n$ and that $u(x)$ depends only on $x_n$.
From the previous two steps, we have
$$
w_\tau(x) \le 0, \quad x \in \mathbb{R}^n,~ \forall~ \tau > 0.
$$
It remains to prove
\begin{align}\label{D526}
w_\tau(x) < 0, \quad x \in \mathbb{R}^n,~ \forall~ \tau > 0.
\end{align}

Suppose \eqref{D526} does not hold. Then there exists a point $\hat{x} \in \mathbb{R}^n$ and $\tau_0 > 0$ such that
\begin{align}\label{D527}
w_{\tau_0}(\hat{x}) = 0,
\end{align}
and $\hat{x}$ is a maximum point of $w_{\tau_0}(x)$ in $\mathbb{R}^n$.
It follows from \eqref{D51}, \eqref{D54}, and \eqref{D527} that
\begin{align}\label{D528}
(-\Delta)^s w_{\tau_0}(\hat{x}) - \Delta w_{\tau_0}(\hat{x})
&= g(\hat{x},u(\hat{x})) - g((\hat{x})', \hat{x}_n + \tau_0, u_{\tau_0}(\hat{x})) \nonumber\\
&\le g(\hat{x}, u(\hat{x})) - g(\hat{x}, u_{\tau_0}(\hat{x})) \nonumber\\
&= 0.
\end{align}

Since $w_{\tau_0}(y) \not\equiv 0$ in $\mathbb{R}^n$, we also have
$$
(-\Delta)^s w_{\tau_0}(\hat{x}) - \Delta w_{\tau_0}(\hat{x}) \ge (-\Delta)^s w_{\tau_0}(\hat{x})
= C_{n,s} \, \text{P.V.} \int_{\mathbb{R}^n} \frac{-w_{\tau_0}(y)}{|\hat{x}-y|^{n+2s}} \, dy > 0,
$$
which contradicts \eqref{D528}. Hence, \eqref{D526} holds.

Next, we show that $u(x)$ depends only on $x_n$.
If we replace $u_\tau(x)$ by $u(x + \tau \kappa)$ with $\kappa = (\kappa_1, \dots, \kappa_n)$ and $\kappa_n > 0$, the arguments in \textbf{Step 1} and \textbf{Step 2} still apply. More precisely, for each $\kappa$ with $\kappa_n > 0$, one has
$$
u(x + \tau \kappa) > u(x), \quad x \in \mathbb{R}^n,~ \forall~ \tau > 0.
$$

Letting $\kappa_n \to 0$ and using the continuity of $u(x)$, we infer
$$
u(x + \tau \kappa) \ge u(x)
$$
for arbitrary $\kappa$ with $\kappa_n = 0$. Replacing $\kappa$ with $-\kappa$, we then deduce
$$
u(x + \tau \kappa) \le u(x),
$$
for arbitrary $\kappa$ with $\kappa_n = 0$, which implies
$$
u(x + \tau \kappa) = u(x).
$$
Thus, $u(x',x_n)$ is independent of $x'$ and we conclude
$$
u(x) = u(x_n).
$$

This completes the proof of Theorem \ref{D5}.
\end{proof}

\section{Monotonicity and one-dimensional symmetry  for parabolic  equations}

 In this section, we establish Theorems \ref{N2}, \ref{s1} and \ref{T3}.

\subsection{Monotonicity in $\Omega \times \mathbb R$}

\begin{proof}[Proof of Theorem \ref{N2}]
For $\tau \ge 0$, define
$$
u_\tau(x,t) := u(x',x_n+\tau,t).
$$
The function $u_\tau$ is defined on the set
$$
\Omega_\tau := \Omega - \tau e_n,
$$
which is obtained from $\Omega$ by sliding it downward by a distance $\tau$ along the $x_n$-direction, where $e_n=(0,\dots,0,1)$.
Let
$$
E_\tau := \Omega_\tau \cap \Omega,
\qquad
\bar{\tau} := \sup\{\tau>0 \mid E_\tau \neq \emptyset\},
$$
and define
$$
w_\tau(x,t) := u(x,t)-u_\tau(x,t), \qquad (x,t)\in E_\tau \times \mathbb{R}.
$$

Since $u_\tau$ satisfies the same equation \eqref{N21} in $\Omega_\tau$ as $u$ does in $\Omega$, it follows from \eqref{N21} that
\begin{align}\label{N201}
\partial_t^\alpha w_\tau(x,t) + (-\Delta)^s w_\tau(x,t) - \Delta w_\tau(x,t)
= c(x,t)\, w_\tau(x,t),
\quad (x,t)\in E_\tau \times \mathbb{R},
\end{align}
where
$$
c(x,t) := \frac{h(t,u(x,t))-h(t,u_\tau(x,t))}{u(x,t)-u_\tau(x,t)}
$$
is an $L^\infty$ function satisfying $c(x,t)\le C$.
Moreover, by assumption $(\textbf{B})$, we have
\begin{equation}\label{q6}
w_\tau(x,t) \le 0, \quad (x,t)\in E_\tau^c \times \mathbb{R}.
\end{equation}

To prove the theorem, it suffices to show that
\begin{align}\label{N202}
w_\tau(x,t) < 0, \quad (x,t)\in E_\tau \times \mathbb{R},
\quad \text{for any }~ 0<\tau<\bar{\tau},
\end{align}
which is precisely the statement that $u$ is strictly increasing in the $x_n$–direction.

We now prove \eqref{N202} in two steps.

\textbf{Step 1.}
For $\tau$ sufficiently close to $\bar{\tau}$, the set $E_\tau$ is a narrow region. We claim that
\begin{align}\label{N203}
w_\tau(x,t) \le 0, \quad (x,t)\in E_\tau \times \mathbb{R}.
\end{align}
Indeed, combining \eqref{N201} and \eqref{q6}, and applying the narrow region principle (Theorem~\ref{N1}), we obtain \eqref{N203}.

\textbf{Step 2.}
The inequality \eqref{N203} provides a starting position from which the sliding procedure can be carried out. We then decrease $\tau$ continuously, as long as \eqref{N203} remains valid, until reaching its limiting position. Let
\begin{align*}
\tau_0 = \inf\left\{ \tau \mid w_\tau(x,t) \leq 0, ~ (x,t) \in E_\tau \times \mathbb{R},~ 0 < \tau < \bar{\tau} \right\}.
\end{align*}

We claim that
$$
\tau_0 = 0.
$$
Suppose, to the contrary, that $\tau_0 > 0$. We shall show that the domain $\Omega_\tau$ can be slid slightly upward and that
\begin{align}\label{N204}
w_\tau(x,t) \leq 0, \qquad (x,t) \in E_\tau \times \mathbb{R},
\quad \text{for any }~ \tau_0-\varepsilon < \tau \leq \tau_0,
\end{align}
which contradicts the definition of $\tau_0$.

Since
$$
w_{\tau_0}(x,t) \leq 0, \qquad (x,t) \in E_{\tau_0} \times \mathbb{R},
$$
and
$$
w_{\tau_0}(x,t) < 0, \qquad (x,t) \in (\Omega \cap \partial E_{\tau_0}) \times \mathbb{R},
$$
it follows from condition $(\textbf{B})$ that
$$
w_{\tau_0}(x,t) \not\equiv 0, \qquad (x,t) \in E_{\tau_0} \times \mathbb{R}.
$$

If there exists a point $(x^0,t_0)$ such that $w_{\tau_0}(x^0,t_0)=0$, then $(x^0,t_0)$ is a maximum point of $w_{\tau_0}$. Consequently,
$$
\left\{
\begin{array}{ll}
\partial_t w_{\tau_0}(x^0,t_0) \ge 0, \\
\partial_t^\alpha w_{\tau_0}(x^0,t_0)
= C_\alpha \displaystyle\int_{-\infty}^{t_0}
\frac{w_{\tau_0}(x^0,t_0)-w_{\tau_0}(x^0,\tau)}
{(t_0-\tau)^{1+\alpha}}\, d\tau
\ge 0, \quad 0<\alpha<1,
\end{array}
\right.
$$
and
$$
-\Delta w_{\tau_0}(x^0,t_0) \ge 0.
$$

By a direct computation, we obtain
\begin{align*}
&\partial_t^\alpha w_{\tau_0}(x^0,t_0)
+(-\Delta)^s w_{\tau_0}(x^0,t_0)
-\Delta w_{\tau_0}(x^0,t_0) \\
\ge{}& (-\Delta)^s w_{\tau_0}(x^0,t_0) \\
={}& C_{n,s}\,\mathrm{P.V.}\!\int_{\mathbb{R}^n}
\frac{w_{\tau_0}(x^0,t_0)-w_{\tau_0}(y,t_0)}
{|x^0-y|^{n+2s}}\,dy \\
>{}& 0,
\end{align*}
which contradicts
\begin{align*}
\partial_t^\alpha w_{\tau_0}(x^0,t_0)
+(-\Delta)^s w_{\tau_0}(x^0,t_0)
-\Delta w_{\tau_0}(x^0,t_0)
= h(t_0,u(x^0,t_0)) - h(t_0,u_{\tau_0}(x^0,t_0))
= 0,
\end{align*}
by the first inequality in \eqref{N21}. Therefore,
\begin{align}\label{N205}
w_{\tau_0}(x,t) < 0, \qquad (x,t) \in E_{\tau_0} \times \mathbb{R}.
\end{align}

Next, we choose a closed set $K \subset E_{\tau_0}$ such that
$E_{\tau_0}\setminus K$ is a narrow region. By \eqref{N205}, there exists a constant
$\tilde C_0>0$ satisfying
$$
w_{\tau_0}(x,t) \le -\tilde C_0 < 0,
\qquad (x,t) \in K \times \mathbb{R}.
$$
By the continuity of $w_\tau$ with respect to $\tau$, for sufficiently small
$\varepsilon>0$,
$$
w_{\tau_0-\varepsilon}(x,t) \le 0,
\qquad (x,t) \in K \times \mathbb{R}.
$$
It follows from condition $(\textbf{B})$ that
$$
w_{\tau_0-\varepsilon}(x,t) \le 0,
\qquad (x,t) \in (E_{\tau_0-\varepsilon})^c \times \mathbb{R}.
$$
Since
$$
(E_{\tau_0-\varepsilon}\setminus K)^c
= K \cup (E_{\tau_0-\varepsilon})^c,
$$
we conclude that
\begin{equation*}
\left\{
\begin{array}{ll}
\begin{aligned}
\partial^\alpha_t w_{\tau_0 - \varepsilon}(x,t)+(-\Delta)^s w_{\tau_0 - \varepsilon}(x,t)&-\Delta w_{\tau_0 - \varepsilon}(x,t)\\
&=c_{\tau_0 - \varepsilon}(x,t)w_{\tau_0 - \varepsilon}(x,t),
\end{aligned}
~& (x,t)\in (E_{\tau_0 - \varepsilon} \setminus K) \times \mathbb{R},\\
w_{\tau_0 - \varepsilon} \leq 0,~&(x,t)\in (E_{\tau_0 - \varepsilon} \setminus K)^c \times \mathbb{R}.\\
 \end{array}\right.
\end{equation*}
By Theorem~\ref{N1}, it follows that \eqref{N204} holds. Consequently, this contradicts the definition of $\tau_0$. Therefore,
$$
w_\tau(x,t) \le 0, \qquad (x,t)\in E_\tau \times \mathbb{R},
\quad \text{for any }~ 0<\tau<\bar{\tau}.
$$

Next, we prove \eqref{N202}. Suppose that there exists a point
$(\bar x,\bar t)$ such that $w_\tau(\bar x,\bar t)=0$. Then
$(\bar x,\bar t)$ is a maximum point of $w_\tau$, and hence
$$
\left\{
\begin{array}{ll}
\partial_t w_\tau(\bar x,\bar t)\ge 0,\\
\partial_t^\alpha w_\tau(\bar x,\bar t)
= C_\alpha \displaystyle\int_{-\infty}^{\bar t}
\frac{w_\tau(\bar x,\bar t)-w_\tau(\bar x,\tau)}
{(\bar t-\tau)^{1+\alpha}}\,d\tau
\ge 0,
\quad 0<\alpha<1,
\end{array}
\right.
$$
and
$$
-\Delta w_\tau(\bar x,\bar t)\ge 0.
$$

Since
$$
w_\tau(x,t)\not\equiv 0,
\qquad (x,t)\in E_\tau\times\mathbb{R},
\quad \text{for any } 0<\tau<\bar{\tau},
$$
we obtain
\begin{align*}
\partial_t^\alpha w_\tau(\bar x,\bar t)
+(-\Delta)^s w_\tau(\bar x,\bar t)
-\Delta w_\tau(\bar x,\bar t)
&\ge (-\Delta)^s w_\tau(\bar x,\bar t)\\
&= C_{n,s}\,\mathrm{P.V.}\!\int_{\mathbb{R}^n}
\frac{w_\tau(\bar x,\bar t)-w_\tau(y,\bar t)}
{|\bar x-y|^{n+2s}}\,dy\\
&> 0,
\end{align*}
which contradicts
$$
\partial_t^\alpha w_\tau(\bar x,\bar t)
+(-\Delta)^s w_\tau(\bar x,\bar t)
-\Delta w_\tau(\bar x,\bar t)
= h(t,u(\bar x,\bar t))
- h(t,u_\tau(\bar x,\bar t))
= 0,
$$
by the first inequality in \eqref{N21}. Hence, \eqref{N202} follows and the proof
of Theorem~\ref{N2} is completed.
\end{proof}

\subsection{Monotonicity in $\mathbb{R}_+^n$}

\begin{proof}[Proof of Theorem \ref{s1}]
First, define
$$
u_\tau(x,t)=u(x',x_n+\tau,t)
\quad \text{and} \quad
w_\tau(x,t)=u(x,t)-u_\tau(x,t),
$$
for any $\tau>0$. Since $h(t,u)$ is monotone decreasing in $u$, we have
\begin{align}\label{s101}
h(t,u(x,t)) - h(t,u_\tau(x,t)) \le 0,
\qquad (x,t)\in \mathbb{R}_+^n \times \mathbb{R},
\end{align}
at points where $w_\tau(x,t)>0$.

Combining \eqref{s11} and \eqref{s101}, we deduce that
\begin{align*}
\partial_t^\alpha w_\tau(x,t)
+(-\Delta)^s w_\tau(x,t)
-\Delta w_\tau(x,t)
= h(t,u(x,t)) - h(t,u_\tau(x,t))
\le 0,
\end{align*}
for $(x,t)\in \mathbb{R}_+^n \times \mathbb{R}$ at points where $w_\tau(x,t)>0$.
It is easy to verify that the exterior condition
$$
w_\tau(x,t)\le 0,
\qquad (x,t)\in (\mathbb{R}^n\setminus \mathbb{R}_+^n)\times \mathbb{R},
$$
is satisfied. Applying the maximum principle in unbounded domains
(Theorem~\ref{T2}), we obtain
\begin{align}\label{s102}
w_\tau(x,t)\le 0,
\qquad (x,t)\in \mathbb{R}_+^n \times \mathbb{R},
\quad \text{for all }~ \tau>0.
\end{align}

Finally, we show that
\begin{align}\label{s103}
w_\tau(x,t)<0,
\qquad (x,t)\in \mathbb{R}_+^n \times \mathbb{R},
\quad \text{for all }~ \tau>0.
\end{align}
Suppose, by contradiction, that there exist a point $(x^0,t_0)$ and
$\tau_0>0$ such that
$$
w_{\tau_0}(x^0,t_0)
:= \max_{\mathbb{R}_+^n \times \mathbb{R}} w_{\tau_0}(x,t)
= 0.
$$
Then $(x^0,t_0)$ is a maximum point of $w_{\tau_0}$, and hence
$$
\left\{
\begin{array}{ll}
\partial_t w_{\tau_0}(x^0,t_0)\ge 0,\\
\partial_t^\alpha w_{\tau_0}(x^0,t_0)
= C_\alpha \displaystyle\int_{-\infty}^{t_0}
\frac{w_{\tau_0}(x^0,t_0)-w_{\tau_0}(x^0,\tau)}
{(t_0-\tau)^{1+\alpha}}\,d\tau
\ge 0,
\quad 0<\alpha<1,
\end{array}
\right.
$$
and
$$
-\Delta w_{\tau_0}(x^0,t_0)\ge 0.
$$

Since $u=0$ in $\mathbb{R}^n\setminus \mathbb{R}_+^n$ and $u>0$ in
$\mathbb{R}_+^n$, it follows that
$w_{\tau_0}(y,t_0)\not\equiv 0$ for $y\in\mathbb{R}^n$.
Moreover, by \eqref{s11}, \eqref{s102}, and the definitions of the
fractional Laplacian and the Laplacian, we infer that
\begin{align*}
\partial_t^\alpha w_{\tau_0}(x^0,t_0)
+(-\Delta)^s w_{\tau_0}(x^0,t_0)
-\Delta w_{\tau_0}(x^0,t_0)
&\ge C_{n,s}\,\mathrm{P.V.}\!\int_{\mathbb{R}^n}
\frac{-w_{\tau_0}(y,t_0)}{|x^0-y|^{n+2s}}\,dy \\
&> 0,
\end{align*}
which contradicts
\begin{align*}
\partial_t^\alpha w_{\tau_0}(x^0,t_0)
+(-\Delta)^s w_{\tau_0}(x^0,t_0)
-\Delta w_{\tau_0}(x^0,t_0)
= h(t_0,u(x^0,t_0)) - h(t_0,u_{\tau_0}(x^0,t_0))
= 0.
\end{align*}

This contradiction proves \eqref{s103} and completes the proof of
Theorem~\ref{s1}.
\end{proof}

\subsection{One-dimensional symmetry in $\mathbb{R}^n$}

\begin{proof}[Proof of Theorem \ref{T3}]
Denote
$$
W_\tau(x,t)=u(x,t)-u_\tau(x,t).
$$

We prove Theorem~\ref{T3} in three steps.

\medskip
\noindent\textbf{Step~1.}
We show that for $\tau$ sufficiently large,
\begin{align}\label{T301}
W_\tau(x,t)\le 0,
\qquad (x,t)\in \mathbb{R}^n \times \mathbb{R}.
\end{align}
According to \eqref{T32}, there exists a sufficiently large constant
$\lambda>0$ such that
\begin{align}\label{T302}
|u(x,t)| \ge 1-\gamma,
\qquad (x',t)\in \mathbb{R}^{n-1}\times \mathbb{R},
\quad |x_n|\ge \lambda.
\end{align}

We argue by contradiction. If \eqref{T301} does not hold, then there exists
a constant $M>0$ such that
\begin{align}\label{T303}
\sup_{(x,t)\in \mathbb{R}^n \times \mathbb{R}} W_\tau(x,t)
=: M >0.
\end{align}
Define
\begin{align}\label{T304}
\widetilde W_\tau(x,t)
= W_\tau(x,t)-\frac{M}{2}.
\end{align}
We shall prove that
$$
\widetilde W_\tau(x,t)\le 0,
\qquad (x,t)\in \mathbb{R}^n \times \mathbb{R},
$$
by applying the maximum principle in unbounded domains
(Theorem~\ref{T2}).

For all $t\in\mathbb{R}$, by \eqref{T32}, we can choose a sufficiently large
constant $A>\lambda$ such that
\begin{align}\label{T306}
\widetilde W_\tau(x,t)\le 0,
\qquad (x',t)\in \mathbb{R}^{n-1}\times \mathbb{R},
\quad x_n\ge A.
\end{align}
Let
$$
\mathcal D=\mathbb{R}^{n-1}\times(-\infty,A).
$$
Then \eqref{T306} implies
\begin{align}\label{T307}
\widetilde W_\tau(x,t)\le 0,
\qquad (x,t)\in \mathcal D^c \times \mathbb{R},
\end{align}
and hence $\widetilde W_\tau$ satisfies the exterior condition of
Theorem~\ref{T2}.

Next, we verify the differential inequality satisfied by
$\widetilde W_\tau(x,t)$ in $\mathcal D\times \mathbb{R}$.
Combining \eqref{T31} with the definition of $\widetilde W_\tau$, we obtain
\begin{align}\label{T308}
\partial_t^\alpha \widetilde W_\tau(x,t)
+(-\Delta)^s \widetilde W_\tau(x,t)
-\Delta \widetilde W_\tau(x,t)
&= \partial_t^\alpha W_\tau(x,t)
+(-\Delta)^s W_\tau(x,t)
-\Delta W_\tau(x,t) \nonumber\\
&= h(t,u(x,t)) - h(t,u_\tau(x,t)).
\end{align}

Next, we distinguish three possible cases.

\textit{Case 1).} $|x_n| \leq \lambda$.

If $|x_n| \leq \lambda$, then for any $\tau \geq 2\lambda$ we have
$$
x_n + \tau \geq \lambda .
$$
By \eqref{T302}, for $(x', t) \in \mathbb{R}^{n-1} \times \mathbb{R}$,
$$
u(x,t) > u_\tau(x,t) \geq 1-\gamma
$$
at points where $W_\tau(x,t) > 0$.
By the monotonicity assumption \eqref{T33} on the function $h$, this yields
\begin{align}\label{T309}
h(t,u(x,t)) \leq h(t,u_\tau(x,t)),
\quad (x',t)\in\mathbb{R}^{n-1}\times\mathbb{R},\ |x_n|\leq\lambda,
\end{align}
at points where $W_\tau(x,t) > 0$.

\medskip
\textit{Case 2).} $x_n > \lambda$.

If $x_n > \lambda$, then by \eqref{T32} and \eqref{T302}, for
$(x', t) \in \mathbb{R}^{n-1} \times \mathbb{R}$,
$$
u(x,t) > u_\tau(x,t) \geq 1-\gamma
$$
at points where $W_\tau(x,t) > 0$.
Using again the monotonicity assumption \eqref{T33} on $h$, we obtain
\begin{align}\label{T310}
h(t,u(x,t)) \leq h(t,u_\tau(x,t)),
\quad (x',t)\in\mathbb{R}^{n-1}\times\mathbb{R},\ x_n>\lambda,
\end{align}
at points where $W_\tau(x,t) > 0$.

\medskip
\textit{Case 3).} $x_n < -\lambda$.

If $x_n < -\lambda$, then by \eqref{T32} and \eqref{T302}, for
$(x', t) \in \mathbb{R}^{n-1} \times \mathbb{R}$,
$$
u_\tau(x,t) < u(x,t) \leq -1+\gamma
$$
at points where $W_\tau(x,t) > 0$.
By the monotonicity assumption \eqref{T33} on $h$, we deduce
\begin{align}\label{T311}
h(t,u(x,t)) \leq h(t,u_\tau(x,t)),
\quad (x',t)\in\mathbb{R}^{n-1}\times\mathbb{R},\ x_n<-\lambda,
\end{align}
at points where $W_\tau(x,t) > 0$.

\medskip
Combining \eqref{T308}, \eqref{T309}, \eqref{T310}, and \eqref{T311}, we infer that
\begin{align*}
\partial_t^\alpha \tilde{W}_\tau(x,t)
+ (-\Delta)^s \tilde{W}_\tau(x,t)
- \Delta \tilde{W}_\tau(x,t)
= h(t,u(x,t)) - h(t,u_\tau(x,t)) \leq 0
\end{align*}
at points where $W_\tau(x,t) > 0$.

Moreover,
\begin{align*}
{\partial^\alpha_ t} \tilde{W}_\tau (x, t) + (-\Delta)^s \tilde{W}_\tau(x, t) -\Delta \tilde{W}_\tau(x, t)=h(t, u(x, t)) -h(t, u_\tau(x, t))\leq 0,
\end{align*}
at  points where $\tilde{W}_\tau(x, t) > 0.$ Together with the exterior condition on $\tilde{W}_\tau(x, t)$ in $\mathcal{D}^c \times \mathbb{R}$ given by \eqref{T307}, and the maximum principle in unbounded domains (Theorem \ref{T2}), this implies
\begin{align}\label{T312}
\tilde{W}_\tau(x,t) \leq 0,
\quad (x,t)\in\mathbb{R}^n \times \mathbb{R}.
\end{align}

Combining \eqref{T304} with \eqref{T312}, we obtain
$$
W_\tau(x,t) \leq \frac{M}{2},
\quad (x,t)\in\mathbb{R}^n \times \mathbb{R},
$$
which contradicts \eqref{T303}.
Therefore, for any $\tau \geq 2\lambda$,
\begin{align}\label{T313}
W_\tau(x,t) \leq 0,
\quad (x,t)\in\mathbb{R}^n \times \mathbb{R}.
\end{align}

This completes the proof of \textbf{Step~1}.

\textbf{Step 2.}
Since inequality \eqref{T313} obtained in \textbf{Step~1} provides a starting position,
we can now proceed with the sliding argument.
In this step, we decrease $\tau$ starting from $\tau = 2\lambda$ and show that
for any $0 < \tau < 2\lambda$ the inequality
\begin{align}\label{T3201}
W_\tau(x,t) \leq 0,
\quad (x,t)\in\mathbb{R}^n \times \mathbb{R}
\end{align}
still holds.

Define
\begin{align*}
\tau_0
:= \inf\Big\{ \tau \;\big|\;
W_\tau(x,t) \leq 0,\ (x,t)\in\mathbb{R}^n \times \mathbb{R} \Big\}.
\end{align*}
We aim to prove that $\tau_0 = 0$.
Otherwise, we will show that $\tau_0$ can be decreased slightly while the inequality
\begin{align*}
W_\tau(x,t) \leq 0,
\quad (x,t)\in\mathbb{R}^n \times \mathbb{R},
\quad \forall\, \tau \in (\tau_0-\varepsilon,\,\tau_0]
\end{align*}
still holds for some $\varepsilon>0$,
which contradicts the definition of $\tau_0$.

\medskip
First, we claim that
\begin{align}\label{T3209}
\sup_{\substack{|x_n|\leq \lambda \\ (x',t)\in\mathbb{R}^{n-1}\times\mathbb{R}}}
W_{\tau_0}(x,t) < 0.
\end{align}
This immediately implies that there exists a sufficiently small constant
$\varepsilon>0$ such that
\begin{align}\label{T3210}
\sup_{\substack{|x_n|\leq \lambda \\ (x',t)\in\mathbb{R}^{n-1}\times\mathbb{R}}}
W_\tau(x,t) \leq 0,
\quad \forall\, \tau \in (\tau_0-\varepsilon,\,\tau_0].
\end{align}

Suppose, by contradiction, that \eqref{T3209} does not hold.
Then
\begin{align*}
\sup_{\substack{|x_n|\leq \lambda \\ (x',t)\in\mathbb{R}^{n-1}\times\mathbb{R}}}
W_{\tau_0}(x,t) = 0,
\end{align*}
and there exists a sequence
\begin{align*}
\{(x^k,t_k)\}
\subset (\mathbb{R}^{n-1}\times[-\lambda,\lambda]) \times \mathbb{R},
\quad k=1,2,\dots,
\end{align*}
such that
\begin{align*}
W_{\tau_0}(x^k,t_k) \to 0
\quad \text{as }~ k \to \infty.
\end{align*}
Meanwhile, there exists a nonnegative sequence $\{\varepsilon_k\} \searrow 0$ such that
\begin{align}\label{T3211}
W_{\tau_0}(x^k, t_k) = -\varepsilon_k.
\end{align}

Now we introduce the following auxiliary function:
\begin{align}\label{T3212}
W(x, t) = W_{\tau_0}(x, t) + \varepsilon_k \, \xi_k(x, t),
\end{align}
where
$$
\xi_k(x, t) = \xi(x - x^k, t - t_k),
$$
with $\xi(x, t) \in C_0^\infty(\mathbb{R}^n \times \mathbb{R})$ defined by
\begin{equation*}
\xi(x, t)=
\begin{cases}
1, ~& |x|, |t| \leq \frac{1}{2}, \\
0, ~& |x|, |t| \geq 1.
\end{cases}
\end{equation*}

Let
$$
Q(x^k, t_k) := \{(x, t) \mid |x - x^k|, |t - t_k| < 1\}.
$$
Then the maximum of the perturbed function $W(x, t)$ over $Q(x^k, t_k)$ is at least as large as its value outside $Q(x^k, t_k)$. Hence, the global maximum of $W(x, t)$ in $\mathbb{R}^n \times \mathbb{R}$ is attained in $Q(x^k, t_k)$. In particular, combining \eqref{T3211} and \eqref{T3212}, we have $W(x^k, t_k) = 0$. For $(x, t) \in (\mathbb{R}^n \times \mathbb{R}) \setminus Q(x^k, t_k)$, we have $\xi_k(x, t) = 0$, which implies $W(x, t) \leq 0$. Therefore, there exists $(\bar{x}^k, \bar{t}_k) \in Q(x^k, t_k)$ such that $W(x, t)$ attains its maximum:
\begin{align}\label{T3213}
0 \leq W(\bar{x}^k, \bar{t}_k) := \sup_{\mathbb{R}^n \times \mathbb{R}} W(x, t) \leq \varepsilon_k.
\end{align}
From the definition \eqref{T3212} and the estimate \eqref{T3213}, we deduce
\begin{align}\label{T3217}
W_{\tau_0}(\bar{x}^k, \bar{t}_k) \to 0, \quad \text{as }~ k \to +\infty.
\end{align}

Moreover, by \eqref{T3213},
\begin{align}\label{T3214}
0 \leq W(\bar{x}^k, \bar{t}_k) := \sup_{\mathbb{R}^n \times \mathbb{R}} W(x, t) \leq \varepsilon_k.
\end{align}
Meanwhile, at $(\bar{x}^k, \bar{t}_k)$ we have
\begin{equation}\label{T3215}
\begin{cases}
\partial_t W(\bar{x}^k, \bar{t}_k) \geq 0,\\
\partial_t^\alpha W(\bar{x}^k, \bar{t}_k) = C_\alpha \int_{-\infty}^{\bar{t}_k} \frac{W(\bar{x}^k, \bar{t}_k) - W(\bar{x}^k, \tau)}{(\bar{t}_k - \tau)^{1+\alpha}} \, d\tau \geq 0, & \text{if }~ 0<\alpha<1,
\end{cases}
\end{equation}
and
$$
-\Delta W(\bar{x}^k, \bar{t}_k) \geq 0.
$$
Therefore,
\begin{align}\label{T3216}
(-\Delta)^s W(\bar{x}^k, \bar{t}_k) - \Delta W(\bar{x}^k, \bar{t}_k)
\geq (-\Delta)^s W(\bar{x}^k, \bar{t}_k)
= C_{n,s} \, \text{P.V.} \int_{\mathbb{R}^n} \frac{W(\bar{x}^k, \bar{t}_k) - W(y, \bar{t}_k)}{|\bar{x}^k - y|^{n+2s}} \, dy \geq 0.
\end{align}
Combining \eqref{T31}, \eqref{T3212}, \eqref{T3217}, \eqref{T3215}, and \eqref{T3216}, we obtain
\begin{align}\label{T3218}
0 \leq&(-\Delta)^sW(\bar x^k, \bar{t}_k)-\Delta W(\bar x^k, \bar{t}_k)\nonumber\\
=& (-\Delta)^s W_{\tau_0}(\bar{x}^k, \bar{t}_k) + \varepsilon_k (-\Delta)^s \xi_k(\bar{x}^k, \bar{t}_k)-\Delta W_{\tau_0}(\bar{x}^k, \bar{t}_k) - \varepsilon_k \Delta \xi_k(\bar{x}^k, \bar{t}_k) \nonumber\\
=&-{\partial^\alpha_ t} W_{\tau_0}(\bar{x}^k, \bar{t}_k)+h(\bar{t}_k, u(\bar{x}^k, \bar{t}_k))-h(\bar{t}_k, u_{\tau_0}(\bar{x}^k, \bar{t}_k))
+\varepsilon_k (-\Delta)^s \xi_k(\bar{x}^k, \bar{t}_k)- \varepsilon_k \Delta \xi_k(\bar{x}^k, \bar{t}_k) \nonumber\\
=&-\{{\partial^\alpha_ t} W(\bar{x}^k, \bar{t}_k)-\varepsilon_k{\partial^\alpha_ t} \xi_k (\bar{x}^k, \bar{t}_k)\}+h(\bar{t}_k, u(\bar{x}^k, \bar{t}_k))-h(\bar{t}_k, u_{\tau_0}(\bar{x}^k, \bar{t}_k))\nonumber\\
&+\varepsilon_k (-\Delta)^s \xi_k(\bar{x}^k, \bar{t}_k)- \varepsilon_k \Delta \xi_k(\bar{x}^k, \bar{t}_k) \nonumber\\
\leq&h(\bar{t}_k, u(\bar{x}^k, \bar{t}_k))-h(\bar{t}_k, u_{\tau_0}(\bar{x}^k, \bar{t}_k))
+\varepsilon_k (-\Delta)^s \xi_k(\bar{x}^k, \bar{t}_k)- \varepsilon_k \Delta \xi_k(\bar{x}^k, \bar{t}_k)+\varepsilon_k{\partial^\alpha_ t} \xi_k(\bar{x}^k, \bar{t}_k)\nonumber\\
\leq&h(\bar{t}_k, u(\bar{x}^k, \bar{t}_k))-h(\bar{t}_k, u_{\tau_0}(\bar{x}^k, \bar{t}_k))
+\varepsilon_k C\nonumber\\
\to& 0, ~\mbox{ as }~k\to +\infty,
\end{align}
where we used the fact that
$$
\big|(-\Delta)^s \xi_k(\bar{x}^k, \bar{t}_k) - \Delta \xi_k(\bar{x}^k, \bar{t}_k) + \partial_t^\alpha \xi_k(\bar{x}^k, \bar{t}_k)\big| \leq C,
$$
and $C$ is a positive constant. Moreover, by Lemma \ref{L1} and \eqref{T3214}, for any $r>0$, we obtain
\begin{align}\label{T3219}
\frac{C_0}{C_{n,s}} r^{2s} \big\{(-\Delta)^s W(\bar{x}^k, \bar{t}_k) - \Delta W(\bar{x}^k, \bar{t}_k)\big\}
+ C_0 r^{2s} \int_{B_r^c(\bar{x}^k)} \frac{W(y, \bar{t}_k)}{|\bar{x}^k - y|^{n+2s}} \, dy \geq W(\bar{x}^k, \bar{t}_k),
\end{align}
with
$$
C_0 = \frac{1}{\int_{B_1^c(0)} |y|^{-n-2s} \, dy}.
$$
By \eqref{T3214}, \eqref{T3218} and \eqref{T3219}, for any finite $r>0$, we have
$$
\int_{B_r^c(\bar{x}^k)} \frac{W(y, \bar{t}_k)}{|\bar{x}^k - y|^{n+2s}} \, dy \to 0, \quad \text{as }~ k \to +\infty.
$$
Furthermore, for any fixed $r>0$,
\begin{align}\label{T3220}
W(y, \bar{t}_k) \to 0, \quad y \in B_r^c(\bar{x}^k), \quad \text{as }~ k \to +\infty.
\end{align}

Denote
$$
u_k(x, t) := u(x + \bar{x}^k, t + \bar{t}_k).
$$
Since $u(x, t)$ is uniformly continuous, by the Arzelà-Ascoli theorem, there exists a subsequence (still denoted $u_k$) such that
$$
u_k(x, t) \to u_\infty(x, t), \quad (x, t) \in \mathbb{R}^n \times \mathbb{R}, \quad \text{as }~ k \to +\infty.
$$
Combining this with \eqref{T3220}, we have
$$
u_\infty(x, \bar{t}_k) - (u_\infty)_{\tau_0}(x, \bar{t}_k) \equiv 0, \quad x \in B_r^c(\bar{x}^k).
$$
Therefore, for any $i \in \mathbb{N}$ and any fixed $r>0$,
\begin{align}\label{T3221}
u_\infty(x', x_n, \bar{t}_k)
&= u_\infty(x', x_n + \tau_0, \bar{t}_k)
= u_\infty(x', x_n + 2\tau_0, \bar{t}_k)
= \cdots \nonumber\\
&= u_\infty(x', x_n + i\tau_0, \bar{t}_k), \quad x \in B_r^c(\bar{x}^k).
\end{align}
Due to the boundedness of the $n$-th component $\bar{x}_n^k$ of $\bar{x}^k$, and the asymptotic condition \eqref{T32}, we deduce
$$
u_\infty(x', x_n, \bar{t}_k) \longrightarrow \pm 1 \quad \text{as } x_n \to \pm \infty, \text{ uniformly in } x' = (x_1, \dots, x_{n-1}).
$$
Hence, by taking $x_n$ sufficiently negative, we can make $u_\infty(x', x_n, \bar{t}_k)$ close to $-1$, and then choose $i$ sufficiently large to make $u_\infty(x', x_n + i\tau_0, \bar{t}_k)$ close to $1$, which contradicts \eqref{T3221}. This proves \eqref{T3209} and therefore \eqref{T3210} holds.

Next, for $|x_n| > \lambda$, it suffices to show that
\begin{align}\label{T3203}
\sup_{\substack{|x_n|>\lambda \\ (x',t)\in\mathbb{R}^{n-1}\times\mathbb{R}}}
 W_\tau(x, t) \leq 0, \quad \forall~ \tau \in (\tau_0 - \varepsilon, \tau_0].
\end{align}
If \eqref{T3203} is false, then there exist some $\tau \in (\tau_0 - \varepsilon, \tau_0]$ and a constant $M > 0$ such that
\begin{align}\label{T3204}
\sup_{\substack{|x_n|>\lambda \\ (x',t)\in\mathbb{R}^{n-1}\times\mathbb{R}}} W_\tau(x, t) := M > 0.
\end{align}
By the asymptotic condition on $u$ in \eqref{T32}, there exists a constant $A > \lambda$ such that
\begin{align}\label{T3205}
W_\tau(x, t) \leq \frac{M}{2}, \quad (x', t) \in \mathbb{R}^{n-1} \times \mathbb{R}, \quad |x_n| \geq A.
\end{align}

Denote
$$
\mathcal{E} = \{(x', x_n, t) \in \mathbb{R}^{n-1} \times \mathbb{R} \times \mathbb{R} \mid (x', t) \in \mathbb{R}^{n-1} \times \mathbb{R},~ \lambda < |x_n| < A \},
$$
and consider the differential inequality satisfied by the function
$$
\check{W}_\tau(x, t) := W_\tau(x, t) - \frac{M}{2} \quad \text{in }~ \mathcal{E}.
$$

If $\lambda < x_n < A$, then at the points in $\mathcal{E}$ where $\check{W}_\tau(x, t) > 0$, for all $\tau \in (\tau_0 - \varepsilon, \tau_0]$ and $(x', t) \in \mathbb{R}^{n-1} \times \mathbb{R}$, we have
$$
u(x, t) > u_\tau(x, t) + \frac{M}{2} \geq u_\tau(x, t) \geq 1 - \gamma.
$$
Then, by the monotonicity assumption \eqref{T33} on the function $h$, we deduce
\begin{align}\label{T3206}
h(t, u(x, t)) \leq h(t, u_\tau(x, t)), \quad (x', t) \in \mathbb{R}^{n-1} \times \mathbb{R}, \quad \lambda < x_n < A,
\end{align}
at the points where $\check{W}_\tau(x, t) > 0$.

If $-A < x_n < -\lambda$ at the points in $\mathcal{E}$ where $\check{W}_\tau(x, t) > 0$, for all $\tau \in (\tau_0 - \varepsilon, \tau_0]$ and $(x', t) \in \mathbb{R}^{n-1} \times \mathbb{R}$, we have
$$
-1+\gamma \geq u(x, t) > u_\tau(x, t).
$$
Then, by the monotonicity assumption \eqref{T33} on the function $h$, we obtain
\begin{align}\label{T3207}
h(t, u(x, t)) \leq h(t, u_\tau(x, t)), \quad (x', t) \in \mathbb{R}^{n-1} \times \mathbb{R}, \quad -A < x_n < -\lambda,
\end{align}
at the points where $\check{W}_\tau(x, t) > 0$.

Combining \eqref{T31}, \eqref{T3206}, and \eqref{T3207}, we deduce
\begin{align}\label{T3208}
\partial^\alpha_t \check{W}_\tau(x, t) + (-\Delta)^s \check{W}_\tau(x, t) - \Delta \check{W}_\tau(x, t)
&= \partial^\alpha_t W_\tau(x, t) + (-\Delta)^s W_\tau(x, t) - \Delta W_\tau(x, t) \nonumber\\
&= h(t, u(x, t)) - h(t, u_\tau(x, t)) \nonumber\\
&\leq 0,
\end{align}
at the points in $\mathcal{E}$ where $\check{W}_\tau(x, t) > 0$.

By combining \eqref{T3210} and \eqref{T3205}, we deduce the following exterior condition:
$$
\check{W}_\tau(x, t) \leq 0, \quad (x, t) \in \mathcal{E}^c \times \mathbb{R}.
$$
Then, applying \eqref{T3208} and the maximum principle in unbounded domains (Theorem \ref{T2}), we conclude
$$
\check{W}_\tau(x, t) \leq 0, \quad (x, t) \in \mathbb{R}^n \times \mathbb{R}, \quad \forall~ \tau \in (\tau_0 - \varepsilon, \tau_0],
$$
which contradicts \eqref{T3204}. Hence, \eqref{T3203} holds.

It follows from \eqref{T3210} and \eqref{T3203} that \eqref{T3209} is true, i.e.,
$$
W_\tau(x, t) \leq 0, \quad (x, t) \in \mathbb{R}^n \times \mathbb{R}, \quad \forall~ \tau \in (\tau_0 - \varepsilon, \tau_0],
$$
which contradicts the definition of $\tau_0$.

Therefore, \eqref{T3201} holds, and this completes the proof in \textbf{Step 2}.

\textbf{Step 3.}
First, we show that
$$
u(x, t) \text{ is strictly increasing with respect to } x_n
$$
and
$$
u(x, t) = u(x_n, t).
$$

By \textbf{Step 1} and \textbf{Step 2}, we have
$$
W_\tau(x, t) \leq 0, \quad (x, t) \in \mathbb{R}^{n} \times \mathbb{R}, \quad \forall~ \tau > 0.
$$
We only need to prove that
\begin{align}\label{T3301}
W_\tau(x, t) < 0, \quad (x, t) \in \mathbb{R}^{n} \times \mathbb{R}, \quad \forall~ \tau > 0.
\end{align}

Suppose \eqref{T3301} does not hold. Then there exists a point $(x^0, t_0) \in \mathbb{R}^{n} \times \mathbb{R}$ and some $\tau_0 > 0$ such that
\begin{align}\label{T3302}
W_{\tau_0}(x^0, t_0) = 0,
\end{align}
and $(x^0, t_0)$ is a maximum point of $W_{\tau_0}(x, t)$ in $\mathbb{R}^n \times \mathbb{R}$, i.e.,
$$
W_{\tau_0}(x^0, t_0) := \sup_{\mathbb{R}^{n} \times \mathbb{R}} W_{\tau_0}(x, t) = 0.
$$
It follows that
\begin{equation}\label{T3303}
\left\{
\begin{array}{ll}
{\partial_t} W_{\tau_0}(x^0, t_0) \geq 0,\\
{\partial^\alpha_t} W_{\tau_0}(x^0, t_0) = C_\alpha \int_{-\infty}^{t_0} \frac{W_{\tau_0}(x^0, t_0) - W_{\tau_0}(x^0, \tau)}{(t_0 - \tau)^{1+\alpha}} \, d\tau \geq 0, \quad \text{if }~ 0 < \alpha < 1,
\end{array}
\right.
\end{equation}
and
\begin{align}\label{T3304}
-\Delta W_{\tau_0}(x^0, t_0) \geq 0.
\end{align}

Combining \eqref{T31}, \eqref{T3302}, \eqref{T3303}, and \eqref{T3304}, and using the definitions of the fractional Laplacian and Laplacian, we deduce
\begin{align*}
0 &= h(t_0, u(x^0, t_0)) - h(t_0, u_{\tau_0}(x^0, t_0)) \\
&= {\partial^\alpha_t} W_{\tau_0}(x^0, t_0) + (-\Delta)^s W_{\tau_0}(x^0, t_0) - \Delta W_{\tau_0}(x^0, t_0)\\
&= {\partial^\alpha_t} W_{\tau_0}(x^0, t_0) + C_{n,s} \, P.V. \int_{\mathbb{R}^n} \frac{-W_{\tau_0}(y, t_0)}{|x^0 - y|^{n+2s}} \, dy - \Delta W_{\tau_0}(x^0, t_0),
\end{align*}
that is,
$$
C_{n,s} \, P.V. \int_{\mathbb{R}^n} \frac{-W_{\tau_0}(y, t_0)}{|x^0 - y|^{n+2s}} \, dy
= -\left( {\partial^\alpha_t} W_{\tau_0}(x^0, t_0) - \Delta W_{\tau_0}(x^0, t_0) \right) \leq 0.
$$
Obviously,
\begin{align}\label{T3305}
W_{\tau_0}(y, t_0) \geq 0, \quad y \in \mathbb{R}^n.
\end{align}
It follows from \eqref{T3305} that
$$
W_{\tau_0}(y, t_0) \equiv 0, \quad y \in \mathbb{R}^n,
$$
since $(x^0, t_0)$ is the maximum point of $W_{\tau_0}(x, t)$ in $\mathbb{R}^n \times \mathbb{R}$, i.e.,
$$
W_{\tau_0}(y, t_0) \leq 0, \quad y \in \mathbb{R}^n.
$$
Consequently, for any $j \in \mathbb{N}$, we have
\begin{align*}
u(y', y_n, t_0)
&= u(y', y_n + \tau_0, t_0)
= u(y', y_n + 2\tau_0, t_0)
= \cdots \\
&= u(y', y_n + j\tau_0, t_0), \quad y \in \mathbb{R}^n.
\end{align*}

By the asymptotic condition, we can choose $y_n$ sufficiently negative so that $u(y', y_n, t_0)$ is close to $-1$, and then take $j$ sufficiently large so that $u(y', y_n + j\tau_0, t_0)$ is close to $1$. This leads to a contradiction and therefore proves \eqref{T3301}, showing that $u(x, t)$ is strictly increasing with respect to $x_n$.

Next, we show that
$$
u(x, t) = u(x_n, t).
$$
If we replace $u_\tau(x, t)$ by $u(x + \tau \mu, t)$ with $\mu = (\mu_1, \cdots, \mu_n)$ and $\mu_n > 0$, the arguments in \textbf{Step 1} and \textbf{Step 2} still hold.

On the one hand, by a similar argument as in \textbf{Step 1} and \textbf{Step 2}, for each $\mu$ with $\mu_n > 0$, we have
$$
u(x + \tau \mu, t) > u(x, t), \quad (x, t) \in \mathbb{R}^{n} \times \mathbb{R}, \quad \forall~\tau > 0.
$$
Letting $\mu_n \to 0$ and using the continuity of $u(x, t)$, it follows that
$$
u(x + \tau \mu, t) \geq u(x, t)
$$
for any $\mu$ with $\mu_n = 0$.

On the other hand, by replacing $\mu$ with $-\mu$, we deduce
$$
u(x + \tau \mu, t) \leq u(x, t)
$$
for any $\mu$ with $\mu_n = 0$.

Hence, we conclude
$$
u(x + \tau \mu, t) = u(x, t),
$$
which implies that $u(x', x_n, t)$ is independent of $x'$. Therefore,
$$
u(x, t) = u(x_n, t).
$$
This completes \textbf{Step 3} and the proof of Theorem \ref{T3}.
\end{proof}

\vspace{2mm}
\noindent \textbf{Acknowledgments.}
Y. Deng was supported by National Natural Science Foundation of China (12271196) and the National Key R\& D Program of China (2023YFA1010002).
P. Wang  was supported by the National Natural Science Foundation of China (No. 12101530), the Sponsored by  Program for Science \& Technology Innovation Talents in Universities of Henan Province (No. 26HASTIT040) and the Nanhu Scholars Program for Young Scholars of XYNU.  Z. Wang was supported by  the Supported by the Scientific Research Foundation of Graduate School of Xinyang Normal University (No. 2025KYJJ50).
L. Wu is partially supported by the National Natural Science Foundation of China (Grant No. 12401133) and the Guangdong Basic and Applied Basic Research Foundation (2025B151502069).

\vspace{2mm}
\noindent \textbf{Conflict of interest.} The authors do not have any possible conflicts of interest.

\vspace{2mm}

\noindent \textbf{Data availability statement.}
 Data sharing is not applicable to this article, as no data sets were generated or analyzed during the current study.

\end{document}